\numberwithin{equation}{section}
\numberwithin{figure}{section}
\newtheorem{theorem}{Theorem}[section]
\newtheorem{corollary}[theorem]{Corollary}
\newtheorem{proposition}[theorem]{Proposition}
\newtheorem{lemma}[theorem]{Lemma}
\theoremstyle{definition}
\newtheorem{definition}[theorem]{Definition}
\newtheorem{example}[theorem]{Example}
\newtheorem{problem}[theorem]{Problem}
\newtheorem{remark}[theorem]{Remark}
\newcommand{\NN}{\mathbb{N}}
\newcommand{\ZZ}{\mathbb{Z}}
\newcommand{\QQ}{\mathbb{Q}}
\newcommand{\FF}{\mathbb{F}}
\newcommand{\RR}{\mathbb{R}}
\newcommand{\PP}{\mathbb{P}}
\renewcommand{\AA}{\mathbb{A}}
\newcommand{\GG}{\mathbb{G}}
\newcommand{\EE}{\mathbb{E}}
\newcommand{\RRb}{\overline{\RR}}
\newcommand{\GL}{\mathop{\rm GL}\nolimits}
\newcommand{\PGL}{\mathop{\rm PGL}\nolimits}
\newcommand{\SL}{\mathop{\rm SL}\nolimits}
\newcommand{\Gal}{\mathop{\rm Gal}\nolimits}
\newcommand{\p}{\mathfrak{p}}
\newcommand{\OO}{\mathcal{O}}
\newcommand{\Spec}{\mathop{\rm Spec}}
\newcommand{\A}{\mathcal A}
\newcommand{\B}{\mathcal{B}}
\newcommand{\X}{\mathcal X}
\newcommand{\E}{\mathcal E}
\newcommand{\M}{\mathcal M}
\newcommand{\I}{\mathcal I}
\newcommand{\nr}{{\scriptstyle \rm nr}}
\newcommand{\rig}{{\rm rig}}
\newcommand{\iso}{\stackrel{\sim}{\to}}
\newcommand{\liso}{\stackrel{\sim}{\longrightarrow}}
\newcommand{\gen}[1]{\langle\,#1\,\rangle}
\newcommand{\abs}[1]{\lvert#1\rvert}
\newcommand{\norm}[1]{\lVert#1\rVert}
\newcommand{\lexp}[2]{\tensor[^#1]{#2}{}}
\newcommand{\N}{\mathcal{N}}
\begin{document}

	\title{Models of hypersurfaces and Bruhat-Tits buildings}

    \author{Kletus Stern}
    \address{Institute for Algebra and Number Theory, Ulm University}
    \email{kletus.stern@uni-ulm.de}

    \author{Stefan Wewers}
    \address{Institute for Algebra and Number Theory, Ulm University}
    \email{stefan.wewers@uni-ulm.de}

    \subjclass[2020]{14G20 (Primary), 14L30, 20E42, 14Q25, 14G22 (Secondary)}

    \keywords{Hypersurface models, semistable reduction, discrete valuation,
    Bruhat--Tits buildings, non-Archimedean geometry, Geometric Invariant Theory}

    \thanks{We thank Robert Nowak for helpful and inspiring discussions about this project.}

    \begin{abstract}
    	We propose a new approach to constructing semistable hypersurface models over a discretely valued field $K$. For each GIT-stable hypersurface $X/K$ we define a uniformly continuous “stability function” $\phi_X$ on the \emph{metric} Bruhat--Tits building $\B_K$ of $\PGL_{n+1}(K)$. The minimum locus of this function governs semistable hypersurface models: vertices in this locus are precisely the semistable models, and every rational minimizer becomes such a vertex after a finite extension. In residue characteristic $0$ this yields a practical solution of semistable reduction via minimization of $\phi_X$. More generally, we prove that among all finite extensions $L/K$ the minima of $\phi_X$ on $\B_L$ admit a global minimum, giving a new proof of semistable reduction without using GIT. We implement the resulting strategy for plane curves over $p$-adic number fields, and in a follow-up article apply it to compute the semistable reduction (in the sense of Deligne and Mumford) of smooth plane quartics.
    \end{abstract}

\maketitle

\section{Introduction}

\subsection{Models of hypersurfaces with respect to a discrete valuation}

Let $K$ be a field which is complete with respect to a discrete valuation
\[
v_K:K\to\QQ\cup\{\infty\}.
\]
We denote by $\OO_K$ the valuation ring of $v_K$, fix a prime element $\pi$ and let $k=\OO_K/(\pi)$ denote the residue field. We assume that $k$ is perfect.  We let $\Gamma_K:=v_K(K^\times)\subset\QQ$ denote the valuation group (a discrete subgroup of $(\QQ,+)$). As long as the field $K$ is fixed, we may always assume that $\Gamma_K=\ZZ$.

Let $n,d\geq 1$ be positive integers and $V$ a $K$-vector space of dimension $n+1$. We let
\[
W := K[V]_d
\]
denote the $K$-vector space of homogeneous polynomials of degree $d$ on $V$. We fix a nonzero element $F\in W\backslash\{0\}$. It defines a hypersurface of degree $d$,
\[
X := V(F)\subset \PP(V)\cong \PP_K^n,
\]
in projective space of dimension $n$. We wish to study integral models of $X$ with respect to the valuation $v_K$, {\em as a hypersurface}.

Given a {\em coordinate system} $\E=(x_0,\ldots,x_n)$ on $\PP(V)$, i.e.\ a $K$-basis of the dual space $V^*$, we can write
\[
F = \sum_{i} a_i x_0^{i_1}\cdots x_n^{i_n},
\]
with $a_i\in K$ and where $i=(i_0,\ldots,i_n)$ runs over all tuples with $i_j\geq 0$ and $i_0+\ldots+i_n=d$. After multiplication with a suitable constant, we
may assume that the form $F$ defining $X$ is {\em primitive}, i.e.\ that
\[
\min_i v_K(a_i) = 0.
\]
Then $F\in\OO_K[x_0,\ldots,x_n]$, and the Zariski closure of $X$ inside projective $n$-space over $\OO_K$ is the hypersurface defined by $F$,
\[
\X = V(F)\subset\PP^n_{\OO_K}.
\]
We call $\X$ a {\em hypersurface model} of $X$ with respect to $v_K$. The special fiber of $\X$ is the hypersurface of degree $d$
\[
\X_s = V(\overline{F}) \subset\PP^n_k,
\]
where $\overline{F}\in k[x_0,\ldots,x_n]_d$ is the reduction of $F$. We note that the hypersurface model $\X$, and therefore also its special fiber $\X_s$, depends on the choice of the coordinate system $\E$. The problem is to find one such choice which gives the `best' model.

\subsection{Semistable hypersurface models}

We assume, from now on, that  $d\geq 3$ and that the hypersurface $X$ is {\em stable} in the sense of GIT (Geometric Invariant Theory, see \cite{MumfordGIT}). For instance, this condition holds if $X$ is smooth (\cite[Proposition 4.2]{MumfordGIT}).

\begin{definition} \label{def:stable_model}
	A hypersurface model $\X$ of $X$ is called {\em stable} (resp.\ {\em semistable}) if its special fiber $\X_s\subset\PP^n_k$ is stable (resp.\ semistable) in the sense of GIT. See \S \ref{subsec:stability_definitions} for a precise definition. (It is here that we use the assumption that the residue field $k$ is perfect.)
\end{definition}

\begin{remark}
	\begin{enumerate}[(i)]
		\item
		Our terminology is not consistent with the terminology introduced by Koll\'ar in \cite{Kollar97}. A semistable (resp. stable) model in the sense of Definition \ref{def:stable_model} is semistable (resp.\ stable) in Koll\'ar's sense, but the converse is in general false. The semistable models in Koll\'ar's sense are what we call {\em minimal models}, see Corollary \ref{cor:stability_function_intro} below.
		\item
		If $K$ has positive characteristic and has a discrete valuation, then it cannot be perfect. In this case there seems to be a problem with the condition that $X$ is stable in the GIT sense, because the usual definition of \emph{(semi)stable} is problematic over a non-perfect field. A pragmatic way out is to use the numerical criterion of Hilbert-Mumford-Kempf (Theorem \ref{thm:numerical_criterion}) as a definition for (semi)stability. In fact, throughout this article we will use (semi)stability of hypersurfaces only via this criterion.
	\end{enumerate}
\end{remark}

In general, no semistable model may exist, but we have the following `semistable reduction theorem'.

\begin{theorem} \label{thm:semistable_reduction}
	There exists a finite extension $L/K$ such that the base change of $X$, $X_L\subset\PP^n_L$, has a semistable hypersurface model with respect to the unique extension of $v_K$ to $L$.
\end{theorem}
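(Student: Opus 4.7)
The plan is to exhibit the desired extension by optimising a stability function on the Bruhat--Tits building. Let $\B_L$ denote the Bruhat--Tits building of $\PGL_{n+1}$ over a finite extension $L/K$, and set $\B := \bigcup_{L/K} \B_L$ where $L$ ranges over finite subextensions of $\Kbar/K$. A point of $\B$ is a homothety class of non-archimedean norms on $V_L := V\otimes_K L$ for some such $L$, and the vertices of $\B_L$ correspond bijectively to homothety classes of full $\OO_L$-lattices in $V_L$. Such a norm $\nu$ induces functorially a norm on $W_L = L[V]_d$, yielding a continuous, piecewise affine function $\phi_F : \B \to \RR$ defined by letting $\phi_F(\nu)$ be the induced valuation of $F$, normalised so as to descend through the homothety ambiguity.

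First I would interpret local minima of $\phi_F$ in terms of semistability of models. Fix a vertex $\nu_0 \in \B_L$ with corresponding lattice $\Lambda \subset V_L$; choosing an $\OO_L$-basis of $\Lambda$ gives coordinates in which the primitive rescaling of $F$ reduces to some $\bar F \in k_L[x_0,\ldots,x_n]_d$ and cuts out a hypersurface model of $X_L$. The claim is that this model is semistable if and only if $\nu_0$ is a local minimum of $\phi_F$. Indeed, given any cocharacter $\bar\lambda$ of $\SL_{n+1,k_L}$, smoothness of $\SL_{n+1}$ allows us to lift $\bar\lambda$ to an $\OO_L$-cocharacter $\lambda$, and a direct computation inside the apartment of $\lambda$ shows that the directional derivative of $\phi_F$ at $\nu_0$ in the direction of $\lambda$ equals the Hilbert--Mumford invariant $\mu(\bar\lambda, \bar F)$ (up to a global sign fixed by the normalisation). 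Thus local minimality of $\phi_F$ at $\nu_0$ is equivalent to $\mu(\bar\lambda, \bar F) \le 0$ for every $\bar\lambda$, which by the numerical criterion (Theorem \ref{thm:numerical_criterion}) is precisely semistability of $\bar F$.

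Next I would produce a global minimum of $\phi_F$ on $\B$. This is where GIT-stability of $F$ is used: every non-trivial one-parameter subgroup $\lambda$ of $\SL_{n+1,\Kbar}$ satisfies $\mu(\lambda, F) < 0$. Translating through the slope computation above, this forces $\phi_F$ to be coercive, i.e.\ $\phi_F(\nu) \to +\infty$ as $\nu$ escapes every compact subset of $\B$ along a geodesic ray. Combined with continuity and the piecewise-affine structure of $\phi_F$, coercivity implies that $\phi_F$ attains its infimum on $\B$. The minimum locus is a rational polytope, since the slopes of $\phi_F$ are controlled by the finitely many integer exponent vectors of monomials occurring in $F$; in particular it contains a point that is rational over some finite extension $L/K$, and after possibly enlarging $L$ we may take it to be a vertex of $\B_L$. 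The corresponding lattice yields the desired semistable hypersurface model of $X_L$.

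The main obstacle is the coercivity of $\phi_F$: one must carefully relate the asymptotic behaviour of $\phi_F$ along divergent geodesics in $\B$ to Hilbert--Mumford invariants of one-parameter subgroups of $\SL_{n+1,\Kbar}$, and use GIT-stability of $F$ to deduce unbounded growth in every direction. Verifying the slope formula in the second step and matching sign conventions between the induced norm on $W_L$ and the Hilbert--Mumford pairing also requires care. Once coercivity is established, the finiteness of the extension $L/K$ follows from the rational piecewise-linear structure of $\phi_F$ and the density of $L$-rational vertices in apartments of $\B$ as $L$ ranges over finite subextensions of $\Kbar/K$.
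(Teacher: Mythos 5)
Your overall architecture---translate existence of a semistable model into existence of a global minimum of a stability function $\phi_F$ on the Bruhat--Tits building, then exhibit a minimiser at a rational point---matches the paper's strategy. In particular, your interpretation of local minima at vertices via the Hilbert--Mumford numerical criterion is essentially the content of the paper's Lemma~\ref{lem:stability_function4}. However, there is a genuine gap at the crucial step.

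You assert that ``coercivity implies that $\phi_F$ attains its infimum on $\B$,'' where $\B = \bigcup_L \B_L$ is the union of the buildings over all finite extensions. This does not follow from continuity, coercivity, and the piecewise-affine structure alone. The issue is that $\B$ is not locally compact, and---more decisively---the relevant value group becomes $\QQ$, which is \emph{dense} in $\RR$. Over a fixed $L$ with discrete value group $\Gamma_L$, the paper's Lemma~\ref{lem:stability_function3} shows that the per-apartment minima $m_\E$ live in a fixed discrete subgroup of $\QQ$ and are bounded below, hence a minimum exists; but over the union this discreteness argument disappears entirely, and the infimum of $\{m_{F,L}\}$ over all finite $L/K$ could a priori be a value that is approached but never attained. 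Establishing that this infimum is in fact a minimum is precisely the paper's Theorem~\ref{thm:global_minimum}, whose proof occupies Section~\ref{sec:semistable_reduction} and is nontrivial: one reduces, via a polyhedral geometry lemma (Lemma~\ref{lem:l_i_inequalities}), to computing the maximum of a suitable function $\alpha$ on the rigid-analytic space $G^\rig$, shows one can restrict $\alpha$ to the affinoid $\SL_{n+1}(\OO_L)$-orbit, and then invokes a variant of the nonarchimedean maximum principle (Lemma~\ref{lem:maximum_principle}). Your proposal treats as obvious exactly the step that the paper regards as the interesting content of the proof; without it, you have only produced an infimum, not a minimiser, and the final rationality argument has nothing to apply to.

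As a secondary remark: the paper deliberately proves Theorem~\ref{thm:global_minimum} \emph{without} GIT (using only nonarchimedean analysis and polyhedral geometry), whereas your coercivity argument leans directly on the Hilbert--Mumford criterion for $F$ over $\Kbar$. That dependence is not wrong per se---coercivity on each apartment (Lemma~\ref{lem:stability_function2}(iii)) is indeed deduced from stability---but coercivity along individual geodesics does not by itself patch into existence of a global minimum on the whole union $\B$, so the GIT input you invoke does not actually close the gap.
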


\begin{proof}
	This is a general feature of GIT. See e.g.\ \cite[Lemma 5.3]{MumfordSPV}, \cite[Proposition 2]{Burnol}, or \cite[Theorem 3.3]{LLLR}. We will give a rather different proof of this theorem, via Theorem \ref{thm:global_minimum} below.
\end{proof}

Here is the problem motivating this work:
\begin{problem} \label{prob:semistable_model}
	Given a stable hypersurface $X\subset \PP^n_K$ over a field $K$ which is complete with respect to a discrete valuation $v_K$, construct a finite extension $L/K$ and a semistable hypersurface model $\X$ of $X_L$.
\end{problem}

To our knowledge, no practical algorithm for solving this problem is known, except for $n=1$ (binary forms) and $n=2$, $d=3$ (plane cubics). Using the results of this article, the first author has developed and implemented an algorithm which solves Problem \ref{prob:semistable_model} for arbitrary plane curves ($n=2$) of degree $d\geq 3$ over $p$-adic number fields.  

\vspace{2ex}
Consider the case $n=2$. Then $X\subset\PP^2_K$ is a plane curve and there is a connection to the alternative notion of semistable models of curves in the sense of Deligne and Mumford (\cite{DeligneMumford69}); we shall call such models {\em geometrically semistable}. For simplicity, assume that $X$ is smooth.

For plane cubics ($d=3$) the two definitions of semistability more or less agree: a hypersurface model $\X\subset\PP^2_{\OO_K}$ is semistable in the sense of Definition \ref{def:stable_model} if and only if its special fiber $\X_s\subset\PP^2_k$ is reduced, irreducible and has at most one ordinary double point as singularity. It is stable if and only if $\X_s$ is smooth. See \cite[\S 4.2]{MumfordGIT}. Therefore, a solution to Problem \ref{prob:semistable_model} is given by the well known procedure to compute the (geometric) semistable reduction of elliptic curves.

For $d\geq 4$, the genus of $X$ is $g=(d-1)(d-2)/2\geq 3$ and the two definitions of semistability start to diverge. A hypersurface model of $X$ which is semistable in the sense of Definition \ref{def:stable_model} is in general not geometrically semistable. By the Semistable Reduction Theorem of Deligne and Mumford, there exists an extension $L/K$ and a model $\X$ of $X_L$ which is geometrically semistable; but in general, $\X$ is not a hypersurface model, so it doesn't make sense to apply Definition \ref{def:stable_model}. The precise relation between GIT-semistable plane model and geometric semistable models of plane curves has been studied a lot, see e.g.\ \cite{hassett1999stable}, \cite{LLLR}, \cite{van2025reduction} and, most recently, \cite{MaxMaster}.

The main motivation for us to study GIT-semistability of plane curves is the hope that it will help to compute the geometric semistable reduction of curves of genus $g\geq 3$. In fact, for non-hyperelliptic curves over local fields of small residue characteristic, this is an open problem in computational arithmetic geometry (\cite{oberwolfach}). Building on the results of this article, we solve this problem for smooth plane quartics with non-hyperelliptic reduction, see \cite{SSW}.

\vspace{2ex}
The goal of this article is to lay the foundation for a new and general approach towards Problem \ref{prob:semistable_model}. A more detailed analysis of the case of plane curves, and an algorithm solving Problem \ref{prob:semistable_model} in this case will be given in the first author's thesis (\cite{KletusDiss}). An implementation of this algorithm is available at \cite{KletusGitHub}. We will give a brief demonstration in \S \ref{subsec:example1}.

\subsection{The stability function} \label{subsec:stability_function_intro}

The first observation motivating our approach is that the set of isomorphism classes of hypersurface models of $X$ can naturally be identified with the set of vertices of the Bruhat-Tits building $\B_K$ of the group $\PGL_{n+1}(K)$, defined below.

We let $\B^0_K$ denote the set of homothety classes of full $\OO_K$-lattices $L\subset V^*$ 
(two lattices $L_1,L_2\subset V$ are homothetic if there exists $c\in K^\times$ such that $L_2=c\cdot L_1$). Then $\B_K$ is defined as the simplicial complex with underlying vertex set $\B_K^0$, whose $s$-simplices are the sets of the form $\{[L_0],\ldots,[L_s]\}$ such that
\[
L_0\subsetneq L_1\subsetneq \ldots\subsetneq L_s\subsetneq \pi^{-1}L_0.
\]
It is well known that $\B_K$ is a {\em euclidean building}\footnote{Using the traditional definition of {\em building}, this is true only if $K$ is a local field, i.e.\ if the residue field is finite.}, see e.g.\ \cite[\S 3]{ASENS_2011} or \cite{parreau_immeubles}. In particular, $\B_K$ has an underlying geometric realization which, as a metric space, is the union of euclidean affine spaces of dimension $n$ called {\em apartments}. In the following, we use the letter $\B_K$ to denote this metric space.

On each apartment $\A\subset\B_K$, the set of vertices $\A^0=\A\cap\B_K^0$ forms a
lattice in the underlying affine space. Besides the vertices, $\B_K$ contains a
distinguished dense subset $\B_K(\QQ)$ of \emph{rational points}, which may be
characterized by having rational coordinates with respect to some (equivalently,
any) apartment.

We have defined a hypersurface model $\X$ of $X$ by the choice of a $K$-basis $\E=(x_0,\ldots,x_n)$ of
\[
V^* = H^0(\PP^n_K,\OO(1)).
\]
It is easy to see that the isomorphism class of $\X$ only depends on the homothety class $[L]\in\B_K^0$ of the lattice
\[
L:=\gen{x_0,\ldots,x_n}_{\OO_K}.
\]
This defines a natural bijection between the set of isomorphism classes of hypersurface models of $X$ with the set $\B_K^0$.

Koll\'ar (\cite{Kollar97}) and  Elsenhans and Stoll (\cite{ES}) study the problem, related to Problem \ref{prob:semistable_model}, of finding {\em minimal hypersurface models} of $X$. From our point of view, this amounts to computing the minimum of a certain function
\[
\phi_X^\circ:\B_K^0\to\ZZ,
\]
which tracks the valuation of invariants of the hypersurface $X$ with respect to the coordinate system defining a model. Our second observation is that this function extends naturally to a continuous function on the full metric space $\B_K$ with very nice properties. 
Using this function, the minimization problem governing semistable reduction is naturally a
\emph{continuous} optimization problem on the metric space $\B_K$, rather than a discrete
problem on its vertex set.

More precisely, we have the following result.

\begin{proposition} \label{prop:stability_function_intro}
	Let $X\subset\PP^n_K$ be a stable hypersurface of degree $d$. Then there exists a uniformly continuous  function
	\[
	\phi_X:\B_K\to\RR,
	\]
	with the following properties.
	\begin{enumerate}[(i)]
		\item
		The restriction of $\phi_X$ to any apartment $\A\subset\B_K$ (which is an euclidean affine space of dimension $n$) is piecewise affine, convex and radially unbounded. By the latter we  mean that for every sequence of points $x_0,x_1,\ldots\in\A$ we have
		\[
		d(x_0,x_i)\to\infty \quad\Rightarrow\quad \phi_X(x_i)\to\infty.
		\]
		\item
		The function $\phi_X$ achieves a global minimum $m_X$ in a rational point of $\B_K$.
		\item
		Set
		\[
		M_X:= \{ b\in\B_K \mid \phi_X(b)=m_X\}.
		\]
		Let $b\in\B_K^0$ be a vertex, corresponding to the hypersurface model $\X$. Then $\X$ is semistable if and only if $b\in M_X$. Moreover, $\X$ is stable if and only if $M_X=\{b\}$ over any finite extension $L/K$.
		
		In particular, $X$ has a semistable model over $\OO_K$ if and only if the set $M_X$ contains a vertex of $\B_K$.
	\end{enumerate} 
\end{proposition}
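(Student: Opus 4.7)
The plan is to build $\phi_F$ apartment by apartment from an explicit max-of-affine-functions formula, glue via the apartment-transition cocycle in $\GL(V^*)$, and then read off both the existence of a minimum and its GIT meaning from the Hilbert--Mumford--Kempf numerical criterion applied to the special fibre.

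Fix an apartment $\A\subset\B_K$ attached to a basis $(x_0,\ldots,x_n)$ of $V^*$, so that $\lambda=(\lambda_0,\ldots,\lambda_n)\in\RR^{n+1}$ (taken modulo the diagonal shift $\lambda_j\mapsto\lambda_j+c$) parametrizes the class of the lattice $\bigoplus_j\pi^{\lambda_j}\OO_K x_j$. Writing $F=\sum_i a_i x_0^{i_0}\cdots x_n^{i_n}$, I would define
\[
\phi_F(\lambda)=\max_i\Bigl(-v_K(a_i)+\sum_j i_j\lambda_j\Bigr)-d\,\bar\lambda,\qquad\bar\lambda=\tfrac{1}{n+1}\sum_j\lambda_j,
\]
so that the $d\bar\lambda$ correction kills the scalar-homothety ambiguity. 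This is manifestly a maximum of finitely many affine functions minus a linear one, hence piecewise affine and convex on $\A$, with breakpoints in the rational locus. For the gluing, two apartments overlap in a closed convex subcomplex and their transition is realized by some $g\in\GL(V^*)$ which merely rewrites $F$ in a new basis; both apartment formulas compute the same intrinsic quantity (the shift needed to primitivize $F$ relative to $L_\lambda$, normalized to be $\PGL$-invariant), so they agree on the overlap. Uniform continuity follows from a Lipschitz bound: the slopes in the max-formula are multi-indices of total degree $d$, giving a $d$-Lipschitz estimate on each apartment, and hence globally since apartments embed isometrically.

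The heart of (iii) is the identification, at a vertex $b=[L]$ with primitive $F$ and reduction $\bar F$, of the directional derivative of $\phi_F$ at $b$ along a ray in direction $\mu$ (with $\sum_j\mu_j=0$) with a Mumford weight,
\[
\lim_{t\to 0^+}\frac{\phi_F(b+t\mu)-\phi_F(b)}{t}=\max_{i\in\mathrm{supp}(\bar F)}\sum_j i_j\mu_j,
\]
because for small $t>0$ only monomials surviving in $\bar F$ can attain the max. The right-hand side is, up to sign, precisely the Mumford weight of $\bar F$ along the one-parameter subgroup of $\PGL_{n+1}(k)$ determined by $\mu$. Ranging $\mu$ over all apartments through $b$---and, if necessary, passing to the maximal unramified extension so that all $1$-PS of $\PGL_{n+1}(\bar k)$ are exhausted---Theorem~\ref{thm:numerical_criterion} yields: $\X_s$ is semistable iff every such directional derivative is $\geq 0$, equivalently iff $b$ is a local (and, by convexity, global) minimum of $\phi_F$; and stable iff the inequality is strict for every nontrivial $\mu$, equivalently iff $\{b\}=M_F$. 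This gives (iii).

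Finally, stability of $X$ over $K$ forces the state polytope of $F$ to contain the origin in its interior, i.e.\ $\max_i\sum_j i_j\mu_j>0$ for every nonzero $\mu$ with $\sum_j\mu_j=0$. Consequently $\phi_F|_\A$ grows linearly in every direction at infinity, establishing radial unboundedness. Convexity together with radial unboundedness on each apartment, plus the CAT(0) gluing of $\B_K$, ensures that $M_F$ is a nonempty, closed, bounded convex subset of $\B_K$, and rationality of the breakpoints places some minimum in $\B_K(\QQ)$, giving (ii). The main technical hurdle, in my view, is the apartment-independent bookkeeping required to identify the directional derivatives of $\phi_F$ with Mumford weights: this is where the GIT criterion over $k$ is translated into the metric/combinatorial language of $\B_K$, and where one must carefully handle the fact that maximal tori of $\PGL_{n+1}(\bar k)$ correspond to apartments through $b$ only after an unramified base change from $K$.
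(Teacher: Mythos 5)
Your definition of $\phi_F$ (a max of affine functions in the lattice parameter, corrected by $d\bar\lambda$) coincides with the paper's intrinsic definition $\phi_F([v]) = d\,\omega(v) - v(F)$ after unwinding the sign of the parametrization, and your arguments for piecewise affinity, convexity, the Lipschitz bound for uniform continuity, and the directional-derivative/Mumford-weight identification in part~(iii) essentially reproduce Lemmas~\ref{lem:stability_function1}, \ref{lem:stability_function2}(iii), and~\ref{lem:stability_function4}. (Your aside about passing to $\bar k$ in part~(iii) is avoidable: the paper invokes Kempf's theorem so that $1$-parameter subgroups over $k$ suffice, using only that $k$ is perfect.)

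The genuine gap is in part~(ii). You assert that ``convexity together with radial unboundedness on each apartment, plus the CAT(0) gluing of $\B_K$, ensures that $M_F$ is a nonempty, closed, bounded convex subset.'' This does not follow as stated, for two reasons. First, radial unboundedness \emph{on each apartment separately} does not, by itself, give a global lower bound: the apartment formula
\[
\phi_{F,\E}(w) = \max_{i\in I_{F,\E}}\big(l_i(w)-v_K(a_i)\big) - \tfrac{d}{n+1}\,v_K(\det\E)
\]
has an additive constant $-\tfrac{d}{n+1}v_K(\det\E)$ that varies from apartment to apartment and can be arbitrarily negative; one must show it is compensated by the change in the $v_K(a_i)$. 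The paper does this in Lemma~\ref{lem:stability_function2}(i)--(ii) using a nonzero $\SL(V)$-invariant $I$ with $I(F)\neq 0$ and the transformation rule \eqref{eq:transformation_invariant}, which is precisely the place where GIT enters as a tool. Second, even granting a lower bound, you cannot appeal to proper CAT(0) compactness arguments: as noted in Remark~\ref{rem:building}(iii), $\B_K$ is \emph{not} locally compact when $k$ is infinite, so a bounded minimizing sequence need not subconverge. The paper sidesteps this entirely by observing (Lemma~\ref{lem:stability_function3}) that each apartment-minimum $m_\E$ is the optimal value of a rational linear program with a $\E$-independent constraint matrix, and hence lies in a fixed discrete subgroup of $\QQ$; combined with the GIT lower bound, the set $\{m_\E\}$ has a smallest element, attained in a rational point. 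If you wish to keep a CAT(0) flavor you would need something like $\Delta$-convergence for convex lower-semicontinuous functions on a complete Hadamard space, together with a separate argument for the lower bound and for the uniformity of the coercivity rate across apartments, none of which you supply.
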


We call the function $\phi_X$ the {\em stability function} associated to the hypersurface $X$.

\vspace{2ex}
Assuming a solution to another algorithmic problem (Problem~\ref{prob:find_instability}), the explicit definition of the stability function $\phi_X$ leads to an algorithm for finding a rational point $b\in\B_K$ where $\phi_X$ attains its minimum $m_X$
(see \S~\ref{subsec:finding_minimum}). In \S \ref{sec:stability} we solve this problem for $n=2$. This yields an effective procedure to detect
semistable models of plane curves when they exist.
If no semistable model exists over $K$, the algorithm above does not directly determine
a minimal model. While this can be remedied by a minor modification, it offers little
advantage over existing methods such as those of Elsenhans and Stoll~\cite{ES}.

The definition of $\phi_X$ and the proof of Proposition \ref{prop:stability_function_intro} is given in \S \ref{sec:stability_function}, and is rather straightforward. The definition of the function $\phi_X$ on the set of vertices of $\B_K$ is already suggested by Koll\'ar, and its extension to all of $\B_K$ is suggested by the identification of $\B_K$, as a metric space, with a space of certain valuations, well known by work of Goldman and Iwahori (\cite{GoldmanIwahori}). Part (i) of the proposition follows immediately from making this definition explicit. Part (ii) follows from (i) and the fact, essentially proved by Koll\'ar, that $\phi_X$ is bounded from below. For this argument, and for this argument only, we use Geometric Invariant Theory as a tool.\footnote{And this is not really necessary: in \S \ref{sec:stable_minimum} we prove a much stronger result, which implies that $\phi_X$ is bounded from below, without using Geometric Invariant Theory.} Part (iii) is again clear from Koll\'ar's work, as is the following corollary.

\begin{corollary}\label{cor:stability_function_intro}
	A semistable model is also minimal.
\end{corollary}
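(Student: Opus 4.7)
The plan is to deduce this essentially directly from Proposition \ref{prop:stability_function_intro} (iii), once we unwind the definition of \emph{minimal} model. By construction the Kollár--Elsenhans--Stoll function $\phi_F^\circ : \B_K^0 \to \ZZ$ tracks the valuation of invariants of $X$ with respect to the coordinate system determining a model, and a hypersurface model is \emph{minimal} (in the sense of \cite{Kollar97}, \cite{ES}) precisely when it realizes the minimum of this function over all coordinate systems; under the identification of hypersurface models with vertices of the Bruhat--Tits building, this means exactly that the corresponding vertex $b \in \B_K^0$ satisfies $\phi_F^\circ(b) = \min_{b' \in \B_K^0} \phi_F^\circ(b')$. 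Since $\phi_F$ is designed as the continuous extension of $\phi_F^\circ$, we have $\phi_F|_{\B_K^0} = \phi_F^\circ$.

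Given these dictionaries, the proof is a one-line sandwich. Let $\X$ be a semistable hypersurface model, corresponding to a vertex $b\in\B_K^0$. By Proposition \ref{prop:stability_function_intro} (iii), semistability of $\X$ is equivalent to $b\in M_F$, i.e.\ $\phi_F(b)=m_F$. Because $\B_K^0\subset\B_K$, one has the trivial inequality
\[
m_F \;=\; \min_{x\in\B_K}\phi_F(x) \;\leq\; \min_{b'\in\B_K^0}\phi_F(b') \;=\; \min_{b'\in\B_K^0}\phi_F^\circ(b').
\]
Since $\phi_F^\circ(b)=\phi_F(b)=m_F$, the vertex $b$ actually attains this lower bound, and therefore $b$ minimizes $\phi_F^\circ$ on $\B_K^0$. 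Hence $\X$ is a minimal model.

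There is essentially no obstacle: the content is entirely absorbed into Proposition \ref{prop:stability_function_intro} and into the translation between Kollár's minimality and minimizing $\phi_F^\circ$. The only point to pin down carefully (not spelled out in the excerpt above, but straightforward) is this dictionary, which is why the authors simply call the corollary ``clear from Koll\'ar's work''.
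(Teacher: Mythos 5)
Your proposal is correct and fills in the details behind the paper's one-word justification (``clear from Koll\'ar's work''): it rests, as the paper intends, on Proposition \ref{prop:stability_function_intro} (iii) together with the dictionary, stated in \S\ref{subsec:stability_function_intro}, that minimality means minimizing $\phi_F^\circ$ over $\B_K^0$ and that $\phi_F$ restricts to $\phi_F^\circ$ on vertices. The sandwich inequality $m_F \le \min_{b'\in\B_K^0}\phi_F^\circ(b')$, combined with $\phi_F(b)=m_F$ at the vertex $b$ of a semistable model, is precisely the intended argument, so you have taken the same route as the paper.
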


The converse of Corollary~\ref{cor:stability_function_intro} is false: by work of Koll\'ar~\cite{Kollar97}, a minimal hypersurface model always exists, but a semistable model need not exist over $K$. As Proposition \ref{prop:stability_function_intro} (iii) shows, the obstruction
to the existence of a semistable model over $K$ is \emph{not} the lack of minimality,
but rather the fact that the minimum locus $M_X$ of the stability function may fail
to contain a vertex. This observation motivates the discussion in the next subsection,
where we study how passing to finite extensions of $K$ affects the set $M_X$.

\medskip
We record two additional remarks for later reference.
\begin{itemize}
	\item
	If the residue field of $K$ is finite, then $\B_K$ is locally compact and $M_X$ contains only finitely
	many vertices; hence $X$ admits only finitely many non-isomorphic semistable models
	(cf.~\cite[Theorems~4.1.2 and~5.2.3]{Kollar97}).
	\item
	If the residue field is infinite, the set $M_X$ need not be compact and may
	contain infinitely many vertices; an explicit family of such examples is given in
	\S~\ref{subsec:example2}.
\end{itemize}

\subsection{Finding the right field extension} \label{subsec:finding_right_extension}

The converse of Corollary~\ref{cor:stability_function_intro} fails because the minimum
locus $M_X$ of the stability function $\phi_X$ need not contain a vertex. However,
the proof of Proposition~\ref{prop:stability_function_intro} shows that $M_X$ always
contains at least one \emph{rational point}. A key feature of the Bruhat--Tits building
is that every rational point of $\B_K$ becomes a vertex after passing to a sufficiently
ramified finite extension $L/K$.

More precisely, for any finite extension $L/K$ there is a natural isometric embedding
\[
\iota:\B_K \hookrightarrow \B_L,
\]
and for $L/K$ sufficiently ramified the image of any rational point of $\B_K$ is a
vertex of $\B_L$.

From the definition of the stability function $\phi_X$ it is obvious that it is compatible with such base change. It follows that the obstruction to the existence of a semistable model
over $K$ is purely \emph{integrality}: the minimum locus $M_X$ may fail to contain a vertex,
even though it always contains a rational point (Proposition~\ref{prop:stability_function_intro}(ii)).

\vspace{2ex}
In the case of residue characteristic zero, this integrality issue is the only one.

\begin{theorem} \label{thm:tame_case_intro}
	Assume that the residue field $k$ has characteristic $0$.
	Let $b\in \B_K(\QQ)$ be a rational point where $\phi_X$ attains its minimum $m_X$.
	Let $L/K$ be a finite extension such that the image $\iota(b)$ is a vertex of $\B_L$. Then $\iota(b)$ corresponds to a semistable hypersurface model of $X_L$ over $\OO_L$.
\end{theorem}

Equivalently, in residue characteristic zero the problem of constructing a semistable hypersurface model reduces to the explicit minimization of the stability function on the \emph{original} building $\B_K$, together with the purely formal step of passing to a finite extension $L/K$ that turns a rational minimizer into a vertex.

The proof of Theorem \ref{thm:tame_case_intro} combines Proposition~\ref{prop:stability_function_intro} with the fixed-point theorem for tamely ramified extensions of Rousseau--Prasad (\cite{prasad01}).

\vspace{2ex}
If the residue field $k$ has positive characteristic (the case we are mainly interested in) the problem of finding the correct field extension becomes much more difficult. To better understand the nature of the problem, the following terminology is useful. Let us denote by $m_{X,L}$ the minimal value of $\phi_X$ on $\B_L$.

\begin{definition}\label{def:stable_minimum_intro}
	A point $b\in \B_K$ is called a \emph{stable minimizer} 
	if for every finite extension $L/K$ the image $\iota(b)\in \B_L$ under the natural embedding
	\[
	\iota:\B_K\hookrightarrow\B_L
	\]
	is still a minimizer of $\phi_X$ on $\B_L$, i.e.
	\[
	\phi_X(\iota(b)) = m_{X,L}.
	\]
	The minimal value $m_{X,L}$ is then called the {\em stable minimum} of $\phi_X$.
\end{definition}

Here is what we can directly deduce from Proposition \ref{prop:stability_function_intro}.

\begin{proposition}\label{prop:stable_minimum}
	\begin{enumerate}[(i)]
		\item 
		Let $L/K$ be a finite extension. If $X$ has a semistable hypersurface model over $L$ then $m_{X,L}$ is the stable minimum of $\phi_X$.	
		\item
		Let $b\in\B_K(\QQ)$ be a rational stable minimizer, and let $L/K$ be a finite extension such that $\iota(b)\in\B_L^\circ$ is a vertex. Then the hypersurface model of $X$ corresponding to $\iota(b)$ is semistable.
	\end{enumerate}
\end{proposition}

See \S \ref{subsec:tame_case} for the (easy) proof. 
The proposition shows that the existence of a semistable hypersurface model after a suitable field extension (Theorem \ref{thm:semistable_reduction}) is equivalent to the following result.

\begin{theorem} \label{thm:global_minimum}
	The set $\{m_{X,L}\}$, where $L/K$ runs over all finite field extensions, has a minimum.
\end{theorem}

We prove this theorem in \S \ref{sec:stable_minimum}.
What is interesting is that our proof does not use Geometric Invariant Theory. Instead, it combines relatively basic techniques from non-archimedean analysis and polyhedral geometry. Using this novel approach, the first author has devised and implemented a practical algorithm which solves Problem \ref{prob:semistable_model} for plane curves, even when the residue field $k$ has positive characteristic (\cite{KletusDiss}, \cite{KletusGitHub}).

\section{Stability of hypersurfaces} \label{sec:stability}

\subsection{Definitions and conventions}
\label{subsec:stability_definitions}

In this section, $K$ is a perfect field, and $n,d\geq 1$ are positive integers. We fix a $K$-vector space $V$ of dimension $n+1$, which gives rise to the algebraic groups
\[
G:=\SL(V)\subset \tilde{G}:=\GL(V).
\]
We also let
\[
W:= K[V]_d
\]
denote the space of homogenous polynomials of degree $d$ on $V$. The natural action of $\tilde{G}$ on $V$ induces an action on $W$. We use the following convention: for $F\in W$ and $g\in\tilde{G}$ we set
\[
\lexp{g}{F} := F\circ g^{-1}.
\]
So, as representations of the group $\tilde{G}$, the vector space $W$ is the $d$th symmetric power of the contragredient of $V$.

A nonzero element $F\in W\backslash\{0\}$ defines a hypersurface $X=V_+(F)\subset\PP(V)$. It corresponds exactly to the $K$-rational point $[F]\in\PP(W)$. We write $O(F)\subset \AA(W)$ for the $G$-orbit of $F$, i.e.\ the image of the morphism of $K$-varieties
\[
G\to \AA(W), \quad g\mapsto \lexp{g}{F},
\]
and $G_F\subset G$ for the stabilizer of $F$. It is known that $O(F)\subset \AA(W)$ is locally closed in the Zariski topology, and that $G_F\subset G$ is a closed subgroup scheme (see \cite[\S 9.c]{Milne_iAG}).

\begin{definition} \label{def:stability}
	The hypersurface $X=V_+(F)$ is called {\em semistable} if the Zariski closure of the $G$-orbit of $F$ does not contain the zero vector,
	\[
	0\not\in\overline{O(F)}\subset \AA(W).
	\]
	It is called {\em stable} if the $G$-orbit $O(F)$ is closed, and the stabiliser $G_F$ is finite.
	
	If $X$ is semistable, but not stable, we call it {\em not properly stable}. If it is not semistable, we call it {\em unstable}.
\end{definition}

These definitions only depend on $X$, and are stable under base change of $X$ by a field extension $L/K$. It follows that the property of being (semi)stable are geometric, i.e.\ depend only on the base change of $X$ to an algebraic closure of $K$.

\subsection{The numerical criterion} \label{subsec:numerical_criterion}

As before, we fix a hypersurface $X=V_+(F)$, with $F\in W\backslash\{0\}$. Given a one-parameter subgroup
\[
\lambda:\GG_m=\Spec K[t,t^{-1}]\hookrightarrow G,
\]
the action of $\GG_m$ on $W$ induced by $\lambda$ corresponds to a decomposition
\[
W = \oplus_{m\in\ZZ} W_m^\lambda,
\]
where $W_m^\lambda\subset W$ is the eigenspace of the character $\chi_m:\GG_m\to\GG_m$ defined by $\chi_m(t)=t^m$.
Let $F_m\in W_m^\lambda$ denote the $\chi_m$-component of $F\in W$. We set
\[
\mu(F,\lambda) := \max\{ m\in \ZZ \mid F_m\neq 0\}.
\]
\begin{theorem}[Hilbert-Mumford-Kempf] \label{thm:numerical_criterion}
	The hypersurface $X=V_+(F)$ is semistable if and only if
	\[
	\mu(F,\lambda) \geq 0
	\]
	for all one-parameter subgroups $\lambda$. Moreover, $X$ is stable if and only if $\mu(F, \lambda) > 0$ for all one-parameter subgroups $\lambda$ defined over $\overline{K}$.
\end{theorem}

\begin{proof}
	If $K$ is algebraically closed, this is the famous result by Mumford, see \cite[Chapter 2, \S 1]{MumfordGIT}. As Kempf has shown (see \cite[Theorem 4.2]{KempfInstability}), it suffices to look at the one-parameter subgroups defined over $K$ (it is here where we use the assumption that $K$ is perfect!).
\end{proof}

We wish to make the criterion from Theorem \ref{thm:numerical_criterion} more explicit. Let $\lambda:\GG_m\to G$ be a one-parameter subgroup. Then $\lambda$ is contained in a  maximal torus of $G$. This means that there exists a basis $\E=(x_0,\ldots,x_n)$ of $V^*$ and integers $w_0,\ldots,w_n\in\ZZ$ such that $w_0+\ldots+w_n=0$ and
\begin{equation} \label{eq:ops_lambda}
	\lambda(t) = \begin{pmatrix}
		t^{w_0} &        &          \\
		& \ddots &          \\
		&        & t^{w_n}
	\end{pmatrix},
\end{equation}
where $g=\lambda(t)\in G$ is represented as a matrix via the contragredient representation, i.e.\ we have
\[
\lexp{g}{x}_i = t^{w_i}x_i, \quad i=0,\ldots,n.
\]
Write
\begin{equation}
	F = \sum_{i} a_i x_0^{i_0}\cdots x_n^{i_n},
\end{equation}
with $a_i\in K$, and where $i$ runs over all tuples $i=(i_0,\ldots,i_n)$ with $i_j\geq 0$ and $i_0+\ldots+i_n=d$.
Set
\[
I_{F,\E} := \{ i \mid a_i\neq 0\}.
\]
Then
\begin{equation} \label{eq:stability2}
	\mu(F,\lambda) = - \min_{i\in I_{F,\E}} \gen{i,w}, \quad \text{where}\;
	\gen{i,w} := i_0w_0+\ldots+i_nw_n.
\end{equation}

In this way, we obtain the following explicit criterion for a hypersurface to be unstable. We use the following terminology: an element $w=(w_0, \ldots, w_n) \in \ZZ^{n+1}$ is called a {\em weight vector}. We call $w$ {\em balanced} if $w_0+\ldots+w_n=0$, and {\em ordered} if $w_0\geq \ldots\geq w_n$.

\begin{corollary} \label{cor:instability}
	The hypersurface $X=V_+(F)$ of degree $d$ is unstable if and only if there exists a basis $\E$ of $V^*$ and a balanced and ordered weight vector $w=(w_0,\ldots,w_n)$ such that
	\[
	\forall\, i\in I_{F,\E}:\;\gen{i,w} > 0.
	\]
\end{corollary}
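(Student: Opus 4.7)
The plan is to derive the corollary as a direct coordinatisation of the numerical criterion (Theorem \ref{thm:numerical_criterion}). By that criterion, $X=V_+(F)$ is unstable if and only if there exists a one-parameter subgroup $\lambda:\GG_m\to G$ with $\mu(F,\lambda)>0$. So the task is to parametrise such $\lambda$ by pairs $(\E,w)$ consisting of a basis $\E$ of $V^*$ and a balanced ordered weight vector $w\in\ZZ^{n+1}$, in a manner compatible with formula \eqref{eq:stability2}.

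For the forward direction, suppose $X$ is unstable and pick $\lambda$ as above. Since $G=\SL(V)$ is connected, $\lambda$ is contained in some maximal torus $T\subset G$. The maximal tori of $\SL(V)$ are exactly the stabilisers of a splitting of $V$ into a direct sum of lines, so there is a basis $(e_0,\ldots,e_n)$ of $V$ on which $T$ (and hence $\lambda$) acts diagonally. Passing to the dual basis $\E=(x_0,\ldots,x_n)$ of $V^*$ and recalling that $G$ is represented on $V^*$ via the contragredient action, we obtain integers $w_0,\ldots,w_n$ with $\lambda(t)\cdot x_i=t^{w_i}x_i$. The condition $\lambda(t)\in\SL(V)$ gives $\det\lambda(t)=t^{w_0+\cdots+w_n}=1$, so $w$ is balanced. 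Permuting the basis elements (a relabelling that preserves the set $I_{F,\E}$ up to the corresponding permutation of coordinates), I may further assume $w_0\geq\cdots\geq w_n$, i.e.\ $w$ is ordered. Formula \eqref{eq:stability2} then translates $\mu(F,\lambda)>0$ into exactly the inequality $\gen{i,w}>0$ for every $i\in I_{F,\E}$.

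For the converse, given $\E$ and a balanced ordered $w$ with $\gen{i,w}>0$ for all $i\in I_{F,\E}$, define $\lambda(t)\in\GL(V)$ diagonally in the basis dual to $\E$ via $\lambda(t)\cdot x_i=t^{w_i}x_i$. The balanced condition forces $\det\lambda(t)=1$, so $\lambda:\GG_m\to G$ is a well-defined 1-PS. Applying \eqref{eq:stability2} once more yields $\mu(F,\lambda)>0$, and Theorem \ref{thm:numerical_criterion} gives that $X$ is unstable.

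There is no serious obstacle here: the corollary is essentially a dictionary between 1-PS's of $\SL(V)$ up to conjugation by the Weyl group and pairs $(\E,w)$ with $w$ balanced and ordered, combined with the explicit weight computation \eqref{eq:stability2}. The only structural inputs used are that 1-PS's of the connected group $G$ lie in maximal tori and that maximal tori of $\SL(V)$ are diagonal in some basis; both are standard.
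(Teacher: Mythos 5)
Your overall strategy --- apply Theorem~\ref{thm:numerical_criterion}, place $\lambda$ in a maximal torus to get a diagonalising basis $\E$ of $V^*$, use $\det=1$ for the balanced condition, permute to get $w$ ordered, and read off the weight inequality --- is exactly the paper's intended derivation (the corollary is presented as an immediate consequence of the numerical criterion together with~\eqref{eq:stability2}), and the structural parts of your argument are fine.

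However, the pivotal translation step is not valid as written. You assert that ``Formula~\eqref{eq:stability2} then translates $\mu(F,\lambda)>0$ into exactly the inequality $\gen{i,w}>0$ for every $i\in I_{F,\E}$,'' but~\eqref{eq:stability2} as printed reads $\mu(F,\lambda) = -\min_{i\in I_{F,\E}}\gen{i,w}$; taken literally, $\mu(F,\lambda)>0$ gives $\min_i\gen{i,w}<0$, i.e.\ $\gen{i,w}<0$ for \emph{some} $i$, which is a much weaker (and incorrect) condition. In fact the paper's formula~\eqref{eq:stability2} is inconsistent with its own Theorem~\ref{thm:numerical_criterion} and Definition~\ref{def:instability}, as one sees by testing $F=x_0^d$, $w=(1,-1,0,\ldots,0)$: here $\mu(F,\lambda)=d>0$ but the right-hand side of~\eqref{eq:stability2} is $-d$. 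The correct relation, obtained directly from $\lexp{\lambda(t)}{x}_i = t^{w_i}x_i$ and the definition $\mu(F,\lambda)=\min\{m\mid F_m\neq 0\}$, is $\mu(F,\lambda)=\min_{i\in I_{F,\E}}\gen{i,w}$ with no minus sign, and only then does $\mu(F,\lambda)>0$ become $\gen{i,w}>0$ for all $i$. Your proof should derive this formula from scratch rather than cite~\eqref{eq:stability2} as an oracle; as it stands, the cited equation would lead the argument to the wrong conclusion.
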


\begin{definition} \label{def:instability}
	A one-parameter subgroup $\lambda$ of $G$ is called an {\em instability} for the hypersurface $X=V_+(F)$ if $\mu(F,\lambda) < 0$. We also call the corresponding pair $(\E,w)$ from Corollary \ref{cor:instability} an instability for $X$.
\end{definition}

\subsection{Finding instabilities}

In Problem \ref{prob:semistable_model} we start with a hypersurface $X=V_+(F)$ over a discretely valued field $K$ which we assume to be stable. In our approach for finding a semistable model of $X$ an essential step is to (a) check whether the special fiber $\X_s$ of a given hypersurface model $\X$ is semistable and (b) if it is not, find an explicit instability for $\X_s$. Here $\X_s$ is a hypersurface defined over the residue field of $K$. Therefore, any solution to Problem \ref{prob:semistable_model} using our method necessarily depends on a solution to the following problem.

\begin{problem} \label{prob:find_instability}
	Given a hypersurface $X=V_+(F)\subset\PP(V)$ of degree $d$, find an explicit instability $\lambda$ for $X$, or decide that none exists (i.e.\ that $X$ is semistable).
\end{problem}

There seems to exist an extensive literature on the related problem of finding sufficient criteria for $X$ to be semistable (see e.g.\ the recent article \cite{Mordant2023} and the references therein), but we are not aware of similar efforts for finding necessary criteria, and explicit instabilities, as demanded by Problem \ref{prob:find_instability}.

Nevertheless the case of plane curves (i.e.\ the case $n=2$) is relatively easy and seems to be well known to experts. For lack of an adaquate reference, we state the main result in a form suitable for our purposes.

%
%
%
%


\begin{proposition} \label{prop:plane_curves}
	Let $n=2$ and $X=V_+(F)\subset\PP(V)$ be a plane curve of degree $d\geq 3$. Assume that $X$ is unstable.
	\begin{enumerate}[(i)]
		\item
		At least one of the following statements holds.
		\begin{enumerate}[(a)]
			\item
			The curve $X$ contains a line $L\subset X$ with multiplicity $m>d/3$.
			\item
			The curve $X$ contains a $K$-rational point $P\in X$ with multiplicity $m>2d/3$.
			\item
			The curve $X$ contains a line $L\subset X$ with multiplicity $m>0$. Furthermore, if we write $X=m\cdot L+Y$, with $L\not\subset Y$, then $L$ and $Y$ intersect in a $K$-rational $P$ with intersection multiplicity $>(d-m)/2$.
			\item
			The curve $X$ contains a $K$-rational point $P\in X$ with multiplicity $m>d/2$, and the tangent cone of $X$ in $P$ contains a line $L$ with multiplicity $>m/2$.
		\end{enumerate}
		\item
		Assume that Statement (a) holds for a line $L$. Let $\E=(x_0,x_1,x_2)$ be a coordinate system such that $L=V_+(x_0)$. Then $(\E,w)$, with $w:=(2,-1,-1)$, is an instability for $X$.
		\item
		Assume that Statement (b) holds for a point $P$. Let $\E=(x_0,x_1,x_2)$ be a coordinate system such that $\{P\}=V_+(x_0,x_1)$. Then $(\E,w)$, with $w:=(1,1,-2)$, is an instability for $X$.
		\item
		Assume that Statement (a) and (b) are false. Then there exists a unique pair $(L,P)$ with the following properties. Firstly, one of the Statements (c) or (d) hold for $(L,P)$. Secondly, if $\E=(x_0,x_1,x_2)$ is a coordinate system such that $L=V_+(x_0)$ and $\{P\}=V_+(x_0,x_1)$, then there exists a balanced and ordered weight vector $w$ such that $(\E,w)$ is an instability.
	\end{enumerate}
\end{proposition}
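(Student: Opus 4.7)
The plan is to apply the numerical criterion (Corollary \ref{cor:instability}) and carry out an explicit case analysis on the destabilizing weight vector. Since $X$ is unstable, I fix a basis $\E=(x_0,x_1,x_2)$ and a balanced, ordered weight vector $w$ with $\langle i,w\rangle>0$ for every $i\in I_{F,\E}$. The cone of such weight vectors admits the parameterization
\[
w=(2s+t,\,-s+t,\,-s-2t),\qquad s,t\geq 0,\ (s,t)\neq(0,0),
\]
and a short calculation using $i_0+i_1+i_2=d$ yields
\[
\langle i,w\rangle=s(3i_0-d)+t(d-3i_2).
\]
The two boundary rays of this cone correspond to $w\propto(2,-1,-1)$ (the case $t=0$) and $w\propto(1,1,-2)$ (the case $s=0$), which are exactly the weight vectors appearing in (ii) and (iii).

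For $t=0$ the instability condition becomes $i_0>d/3$ for every monomial, so $L:=V_+(x_0)$ has multiplicity $\min_i i_0>d/3$ in $X$; this is (a), and (ii) is obtained by reversing the implication. The case $s=0$ is symmetric: the condition forces $\max_i i_2<d/3$, so the point $P:=V_+(x_0,x_1)$ has multiplicity $d-\max_i i_2>2d/3$, giving (b) and (iii).

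For the case $s,t>0$, if (a) or (b) already holds we are done; otherwise I claim that (c) or (d) holds for $(L,P):=(V_+(x_0),V_+(x_0,x_1))$, according to whether $s\geq t$ or $t\geq s$. Suppose $s\geq t$ and set $m:=\min_i i_0$. A monomial with $i_0=0$ would force $t(d-3i_2)>sd$, hence $i_2<d(t-s)/(3t)\leq 0$, a contradiction; so $m\geq 1$ and $L\subset X$ with multiplicity $m$. Failure of (a) gives $3m-d\leq 0$, and then the instability applied to any monomial with $i_0=m$, together with $s/t\geq 1$, yields $d-3i_2>d-3m$, i.e.\ $i_2<m\leq(d-m)/2$; consequently $i_1=d-m-i_2>(d-m)/2$. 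This is exactly the assertion that the intersection multiplicity at $P$ of $L$ with $Y=V_+(F/x_0^m)$ exceeds $(d-m)/2$, which is (c). The case $t\geq s$ is the mirror computation and yields (d): first one shows $M:=\max_i i_2<d/2$ (otherwise the instability would force a monomial with $i_1<0$), so the multiplicity $\mu:=d-M$ of $P$ in $X$ exceeds $d/2$; then for any monomial with $i_2=M$ the instability gives $i_0>M\geq(d-M)/2=\mu/2$, which says that $L$ divides the tangent cone $T(u,v)=\sum_{i_2=M}a_iu^{i_0}v^{i_1}$ of $X$ at $P$ with multiplicity $>\mu/2$. This completes (i)--(iii) and the existence assertion in (iv).

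The main obstacle is the uniqueness in (iv). Suppose two distinct flags $(L,P)\neq(L',P')$ each satisfy the two conditions; then each produces a destabilizing one-parameter subgroup of $\PGL_3(K)$ with $st>0$, together with the strong combinatorial constraints on the Newton simplex derived above. A direct combinatorial approach would exhibit a monomial of $F$ whose presence is forced by one of the two flags but forbidden by the other, using failure of (a) and (b) to rule out the degenerate configurations. A more structural approach would invoke the Kempf--Rousseau theorem on optimal destabilizing parabolics: it supplies a canonical conjugacy class of destabilizing one-parameter subgroups, hence a canonical parabolic subgroup of $G$, and one verifies that the stabilizer of any flag $(L,P)$ satisfying (iv) coincides with this canonical parabolic. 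Either route requires a careful interplay between the geometry of $X$ and the combinatorics of the Newton simplex, which is the technical heart of the proposition.
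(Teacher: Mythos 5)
Your proof of parts (i)--(iii) and of the existence assertion in (iv) is correct and is essentially the paper's argument in different clothes. The paper splits the ordered balanced weight chamber according to $w_1=w_2$, $w_0=w_1$, $w_1\le 0$ and $w_1>0$; your $(s,t)$-parametrization $w=(2s+t,\,-s+t,\,-s-2t)$ turns these into $t=0$, $s=0$, $s\ge t$ and $t\ge s$, and the identity $\gen{i,w}=s(3i_0-d)+t(d-3i_2)$ makes the four estimates one-liners. This is a mildly cleaner presentation of the same case analysis, and all the inequalities you extract ($m\ge1$ and $i_2<m\le(d-m)/2$ in case (c), $M<d/2$ and $i_0>M\ge(d-M)/2$ in case (d)) are right.

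However, there is a genuine gap: you do not prove the uniqueness in (iv), and in fact you do not prove the ``secondly'' clause either. Your construction shows that for the \emph{given} destabilizing pair $(\E,w)$ one may read off a pair $(L,P)$ satisfying (c) or (d). But (iv) makes two further claims: that \emph{every} coordinate system adapted to this $(L,P)$ admits a destabilizing weight, and that the pair $(L,P)$ admitting such a coordinate system is \emph{unique}. Neither follows from the existence argument --- the hypotheses (c) and (d) can hold simultaneously for several pairs $(L,P)$, as the paper itself warns. You correctly identify this as ``the technical heart'' and gesture at two possible routes (a direct Newton-polytope combinatorial argument, or Kempf--Rousseau), but you carry out neither. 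The paper resolves both issues at once with a single structural argument on the spherical building $\Delta(G)$ for $G=\SL_3$: the instability locus $C_F=\{\delta:\nu(F,\delta)<0\}$ is nonempty, contains no vertex (because (a) and (b) fail), and is convex in the sense of \cite[Definition 2.10, Corollary 2.16]{MumfordGIT}; hence it is a connected open set contained in the interior of a unique edge of $\Delta(G)$, which simultaneously pins down $(L,P)$ uniquely and shows that every compatible coordinate system (parametrizing the same edge) carries a destabilizing weight. Without some such convexity or optimality input your sketch does not close the argument, so (iv) remains unproved in your write-up.
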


\begin{proof}
	(compare with \cite[Corollary 7.4]{ES})
	Fix a coordinate system $\E=(x_0,x_1,x_2)$ and write
	\begin{equation} \label{eq:plane_curve1}
		F = \sum_{i\in I_{F,\E}} a_i x_0^{i_0}x_1^{i_1}x_2^{i_2}, \quad \text{with $a_i\neq 0$ for all $i\in I_{F,\E}$.}
	\end{equation}
	Let $w=(w_0,w_1,w_2)\in\ZZ^3$ be a balanced and ordered weight vector. Recall that this means that $w_0\geq w_1\geq w_2$ and $w_0+w_1+w_2=0$. Then $(\E,w)$ is an instability if and only if
	\begin{equation} \label{eq:plane_curve2}
		\gen{i,w} = i_0w_0+i_1w_1+i_2w_2 > 0, \quad \forall i\in I_{F,\E}.
	\end{equation}
	
	To prove (i), we assume that $(\E,w)$ is an instability, i.e.\ that \eqref{eq:plane_curve2} holds. We have to show that one of the Statements (a), (b), (c) or (d) holds. For the following case distinction, Figure \ref{fig:stability_picture} may be helpful.
	
	Assume first that $w_1=w_2$. Then we may assume $w=(2,-1,-1)$. For $i=(i_0,i_1,i_2)\in I_{F,\E}$ the inequality \eqref{eq:plane_curve2} is equivalent to $2i_0-i_1-i_2>0$. Using $i_0+i_1+i_2=d$ we find that
	\[
	i_0 > \frac{d}{3}, \quad \forall\,i\in I_{F,\E}.
	\]
	By \eqref{eq:plane_curve1} this means that $x_0^m\mid F$, for some $m>d/3$. In other words, the line $L:=V_+(x_0)$ is a component of $X$ of multiplicity $m>d/3$. This is Statement (a).
	
	Next we assume $w_0=w_1$. Then we may assume $w=(1,1,-2)$. For $i=(i_0,i_1,i_2)\in I_{F,\E}$ the inequality \eqref{eq:plane_curve2} is equivalent to
	\[
	i_0+i_1 > \frac{2d}{3}.
	\]
	This means that $F\in (x_0,x_1)^m$, for some $m>2d/3$; in other words, $P:=[0:0:1]$ is a point of $X$ of multiplicity $m>2d/3$. This is Statement (b).
	
	For the rest of the proof of (i) we may assume $w_0>w_1>w_2$. Assume in addition that $w_1\leq 0$. Write $F=x_0^mG$ such that $x_0\nmid G$. Then $m$ is the multiplicity of the line $L:=V_+(x_0)$ in $X$. If $m>d/3$ then we are in Case (a). So $m\leq d/3$. Let $k$ be minimal with the property that the monomial $x_1^kx_2^{d-m-k}$ occurs in $G$. Then $0\leq k\leq d-m$, $i:=(m,k,d-m-k)\in I_{F,\E}$, and $k$ is the intersection multiplicity of $L$ with $G:=V_+(G)$ in the point $P:=[0:0:1]$. From \eqref{eq:plane_curve2} we obtain the inequality
	\begin{equation} \label{eq:plane_curve3}
		mw_0 + kw_1+(d-m-k)w_2 > 0.
	\end{equation}
	If we assume that $m=0$, then \eqref{eq:plane_curve3}, together with the assumptions $w_0>w_1>w_2$, $w_1\leq 0$ and $0\leq k\leq d$, leads to a contradiction. It follows that $m>0$. We claim that $k>(d-m)/2$.
	
	To prove the claim, we write $k=(d-m)/2+l$. Then \eqref{eq:plane_curve3}, together with $w_0+w_1+w_2=0$, yields
	\begin{equation} \label{eq:plane_curve4}
		\begin{split}
			0 &< mw_0 + \frac{d-m}{2}w_1 + \frac{d-m}{2}w_2 + (w_1-w_2)l \\
			& = \left(m-\frac{d-m}{2}\right)w_0 + (w_1-w_2)l \\
			& = -\frac{d-3m}{2}w_0 + (w_1-w_2)l.
		\end{split}
	\end{equation}
	Using the assumptions $w_1>w_2$ and $m\leq d/3$, we conclude from \eqref{eq:plane_curve4} that
	\[
	l > \frac{(d-3m)w_0}{2(w_1-w_2)} \geq 0.
	\]
	This proves the claim $k>(d-m)/2$ and shows that Statement (c) holds.
	
	Finally, we assume $w_1>0$. Then $w_2=-(w_0+w_1)<0$. Therefore the monomial $x_2^d$ does no occur in $F$ (i.e.\ $(0,0,1)\not\in I_{F,\E}$), and so the point $P:=[0:0:1]$ lies on $X$. Let $m>0$ be the multiplicity of $P$ as a point on $X$. If $m>2d/3$ we are in Case (b), so we may assume $m\leq 2d/3$.
	
	Let $k$ be minimal such that the monomial $x_0^kx_1^{m-k}x_2^{d-m}$ occurs in $F$, where $0\leq k\leq m$. Then $k$ is the multiplicity of the line $L:=V_+(x_0)$ in the tangent cone of $X$ in $P$. Since  $(k,m-k,d-m)\in I_{F,\E}$, \eqref{eq:plane_curve2} gives the inequality
	\begin{equation} \label{eq:plane_curve5}
		k w_0 +(m-k)w_1 + (d-m)w_2 >0.
	\end{equation}
	We have to show that $m>d/2$ and $k>m/2$, i.e.\ that Statement (d) holds.
	
	Using $-w_2=w_0+w_1$ we can rewrite \eqref{eq:plane_curve5} as
	\begin{equation} \label{eq:plane_curve6}
		k(w_0-w_1) > (d-m)w_0 + (d-2m)w_1.
	\end{equation}
	Using $w_0>w_1$ and $k\leq m$ we obtain from \eqref{eq:plane_curve6} the inequality
	\begin{equation} \label{eq:plane_curve7}
		m(w_0-w_1) \geq k(w_0-w_1) >(d-m)w_0 + (d-2m)w_1.
	\end{equation}
	Reordering \eqref{eq:plane_curve7} gives
	\begin{equation} \label{eq:plane_curve8}
		(2m-d)w_0 > (d-m)w_1.
	\end{equation}
	Since $w_0,w_1>0$ and $m\leq d$, \eqref{eq:plane_curve8} implies $m>d/2$, as claimed. On the other hand, using the assumption $m\leq 2d/3$ and $w_0,w_1>0$ we can deduce from \eqref{eq:plane_curve6} the inequality
	\begin{equation} \label{eq:plane_curve9}
		k(w_0-w_1) > (d-\frac{2d}{3})w_0 + (d-\frac{4d}{3})w_1
		= \frac{d}{3}(w_0-w_1).
	\end{equation}
	Since $w_0>w_1$, \eqref{eq:plane_curve9} implies
	\[
	k > \frac{d}{3} \geq \frac{m}{2}.
	\]
	This completes the proof of Part (i) of the Proposition.
	
	For the proof of (ii) we assume that $L=V_+(x_0)$ and that $x_0^m\mid F$, for some $m>d/3$. Then the same calculation as in the proof of (i) shows that $(\E,w)$, with $w:=(2,-1,-1)$, is an instability. The proof of (iii) is similar. We assume $P=[0:0:1]$ and that $F\in (x_0,x_1)^m$, for some $m>2d/3$. Then one can check directly that $(\E,w)$, with $w:=(1,1,-2)$, is an instability. See also Figure \ref{fig:stability_picture}.
	
	The proof of (iv) is more delicate, and we postpone it to the end of \S \ref{subsec:spherical_complex}. One problem is that Statements (c) and (d) do not uniquely determine the pair $(L,P)$. And if we have found the correct pair $(L,P)$ which is `responsible' for the instability, it is not obvious that any coordinate system $\E$ as in (iv) works.
\end{proof}

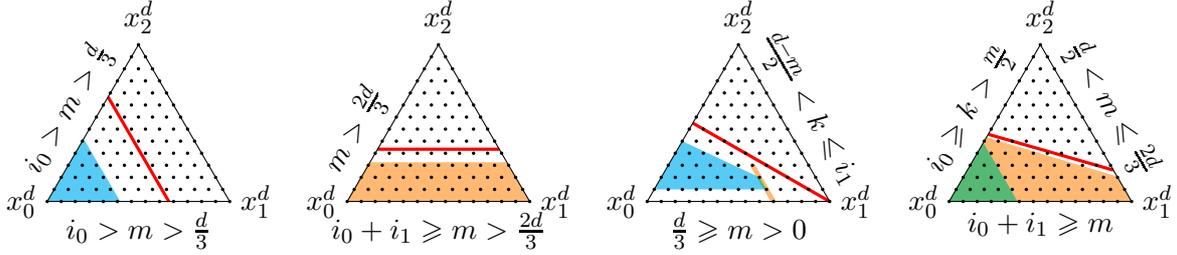
\begin{figure}
	\[
	\begin{tikzpicture}[scale=0.8, font = \small]
		

		
		\pgfmathsetmacro{\L}{3}	
		\pgfmathsetmacro{\d}{13}	
		\pgfmathsetmacro{\hstep}{\L/\d}
		\pgfmathsetmacro{\vstep}{\L*sqrt(3)/(2*\d)} 	
		

		
		\begin{scope}[blend mode=darken]
			
			\fill[cyan!55!white] (0,0) -- (\L*2/5,0) -- (\L/2*2/5,{\L/2*sqrt(3)*2/5});
			
		\end{scope}
		
		\draw[red,very thick] (\L*2/3,0) -- (\L/2*2/3,{\L/2*sqrt(3)*2/3});
		

		
		\foreach \i in {0,...,\d}
		\pgfmathsetmacro{\k}{\d-\i}	
		\foreach \j in {0,...,\k}
		\fill (\hstep/2*\i + \hstep*\j,\vstep*\i) circle (0.8pt);
		

		
		\draw (0,0) -- (\L,0) -- (\L/2,{\L/2*sqrt(3)}) -- (0,0);
		

		\fill (0,0) node[left]{$x_0^d$};
		
		\fill (\L,0) node[right]{$x_1^d$};
		
		\fill (\L/2,{\L/2*sqrt(3)}) node[above]{$x_2^d$};

		\fill (\L*1/2,0) node[below]{$i_0 > m > \frac{ d }{ 3 }$};
		
		\fill (\L/2*1/2,{\L/2*sqrt(3)*1/2}) node[rotate=60,above]{$i_0 > m > \frac{ d }{ 3 }$};

	\end{tikzpicture}
	\hspace{0.2cm}
	\begin{tikzpicture}[scale=0.8, font = \small]
		

		
		\pgfmathsetmacro{\L}{3}	
		\pgfmathsetmacro{\d}{13}	
		\pgfmathsetmacro{\hstep}{\L/\d}
		\pgfmathsetmacro{\vstep}{\L*sqrt(3)/(2*\d)} 	
		

		
		\begin{scope}[blend mode=darken]
			
			\fill[orange!55!white] (0,0) -- (\L,0) -- (\L - \L/2*1/4,{\L/2*sqrt(3)*1/4}) -- (\L/2*1/4,{\L/2*sqrt(3)*1/4});
			
		\end{scope}
		
		\draw[red,very thick] (\L*5/6,{\L/2*sqrt(3)*1/3}) -- (\L*1/6,{\L/2*sqrt(3)*1/3});
		

		
		\foreach \i in {0,...,\d}
		\pgfmathsetmacro{\k}{\d-\i}	
		\foreach \j in {0,...,\k}
		\fill (\hstep/2*\i + \hstep*\j,\vstep*\i) circle (0.8pt);
		

		
		\draw (0,0) -- (\L,0) -- (\L/2,{\L/2*sqrt(3)}) -- (0,0);
		

		\fill (0,0) node[left]{$x_0^d$};
		
		\fill (\L,0) node[right]{$x_1^d$};
		
		\fill (\L/2,{\L/2*sqrt(3)}) node[above]{$x_2^d$};

		\fill (\L/2*11/30,{\L/2*sqrt(3)*11/30}) node[rotate=60,above]{$m > \frac{ 2 d }{ 3 }$};
		
		\fill (\L/2,0) node[below]{$i_0+i_1 \geqslant m > \frac{ 2 d }{ 3 }$};

	\end{tikzpicture}
	\hspace{0.2cm}
	\begin{tikzpicture}[scale=0.8, font = \small]
		

		
		\pgfmathsetmacro{\L}{3}	
		\pgfmathsetmacro{\d}{13}	
		\pgfmathsetmacro{\hstep}{\L/\d}
		\pgfmathsetmacro{\vstep}{\L*sqrt(3)/(2*\d)} 	
		

		
		\begin{scope}[blend mode=darken]
			
			
			\fill[cyan!55!white] ({\hstep/2*(\d-10) + \hstep*1},{\vstep*(\d-8)}) -- ({\hstep/2*(\d-12) + \hstep*0},{\vstep*(\d-12)}) -- ({\hstep/2*3 + \hstep*(\d-6)},{\vstep*1}) -- ({\hstep/2*(\d-7) + \hstep*5},{\vstep*(\d-11)});
			
			\draw[orange!55!white,line width = 1.8pt] ({\hstep*(\d-4)},0) -- ({\hstep/2*3 + \hstep*(\d-7)},{\vstep*3});
			
		\end{scope}
		
		\draw[red,very thick] (\L/2*1/2,{\L/2*sqrt(3)*1/2}) -- (\L*1,0);
		

		
		\foreach \i in {0,...,\d}
		\pgfmathsetmacro{\k}{\d-\i}	
		\foreach \j in {0,...,\k}
		\fill (\hstep/2*\i + \hstep*\j,\vstep*\i) circle (0.8pt);
		

		
		\draw (0,0) -- (\L,0) -- (\L/2,{\L/2*sqrt(3)}) -- (0,0);
		

		\fill (0,0) node[left]{$x_0^d$};
		
		\fill (\L,0) node[right]{$x_1^d$};
		
		\fill (\L/2,{\L/2*sqrt(3)}) node[above]{$x_2^d$};

		\fill (\L/2,0) node[below]{$\frac{d}{3} \geqslant m > 0$};
		
		\fill (\L - \L/2*1/2,{\L/2*sqrt(3)*1/2}) node[rotate=-60,above]{$\frac{d-m}{2} < k \leqslant i_1$};

	\end{tikzpicture}
	\hspace{0.2cm}
	\begin{tikzpicture}[scale=0.8, font = \small]
		

		
		\pgfmathsetmacro{\L}{3}	
		\pgfmathsetmacro{\d}{13}	
		\pgfmathsetmacro{\hstep}{\L/\d}
		\pgfmathsetmacro{\vstep}{\L*sqrt(3)/(2*\d)} 	
		\pgfmathsetmacro{\x}{3}	
		\pgfmathsetmacro{\y}{1}	
		\pgfmathsetmacro{\z}{-4}	
		

		
		\begin{scope}[blend mode=darken]
			
			\fill[cyan!55!white] (0,0) -- (\L*3/8,0) -- (\L/2*3/8,{\L/2*sqrt(3)*3/8});
			
			\fill[orange!55!white] (\L/2*41/100,{\L/2*sqrt(3)*41/100}) -- (0,0) -- (\L,0) -- (\L - \L/2*4/25,{\L/2*sqrt(3)*4/25});
			
		\end{scope}
		
		\draw[red,very thick] ({\L/2*( 1-( \z/(\z-\x) ) )},{\L/2*sqrt(3)*( 1-( \z/(\z-\x) )}) -- ({\L - \L/2*( 1-( \z/(\z-\y) ) )},{\L/2*sqrt(3)*( 1 - \z/(\z-\y) )}); 
		

		
		\foreach \i in {0,...,\d}
		\pgfmathsetmacro{\k}{\d-\i}	
		\foreach \j in {0,...,\k}
		\fill (\hstep/2*\i + \hstep*\j,\vstep*\i) circle (0.8pt);
		

		
		\draw (0,0) -- (\L,0) -- (\L/2,{\L/2*sqrt(3)}) -- (0,0);
		

		\fill (0,0) node[left]{$x_0^d$};
		
		\fill (\L,0) node[right]{$x_1^d$};
		
		\fill (\L/2,{\L/2*sqrt(3)}) node[above]{$x_2^d$};

		\fill (\L/2*1/2,{\L/2*sqrt(3)*1/2}) node[rotate=60,above]{$i_0 \geqslant k > \frac{ m }{ 2 }$};
		
		\fill (\L - \L/2*1/2,{\L/2*sqrt(3)*1/2}) node[rotate=-60,above]{$\frac{d}{2} < m \leqslant \frac{2d}{3}$};
		
		\fill (\L*1/2,0) node[below]{$i_0+i_1 \geqslant m$};
		\fill (\L,0) node[below,white]{$\frac{d}{3}$};	

	\end{tikzpicture}
	\]
	\caption{The four cases of Proposition \ref{prop:plane_curves}. The colored area in any of the four triangles corresponds to nonzero coefficients of an example for $F$.} \label{fig:stability_picture}
\end{figure}

\begin{remark}
	Proposition \ref{prop:plane_curves} yields a practical algorithm which solves Problem \ref{prob:find_instability} for $n=2$. It has been implemented by the first named author, see \cite{KletusGitHub}.
	
	Here is a rough outline of the algorithm. Given a plane curve $X=V_+(F)\subset\PP^2_K$, it is easy to check whether  one of the Statements (a), (b), (c) or (d) holds. Also, there are at most finitely many lines $L$, points $P$, or pairs $(L,P)$ for which this is true.
	
	In each case where $L$ or $P$ as above exist, we choose a coordinate system $\E=(x_0,x_1,x_2)$ such that $L=V_+(x_0)$ resp.\ $P=[0:0:1]$, and write $F$ as in \eqref{eq:plane_curve1}. This determines the set $I_{F,\E}$. Then Condition \eqref{eq:plane_curve2} translates into a system of linear inequalities for $w_0,w_1,w_2\in\ZZ$. Using standard methods from linear optimization, it is easy to find an explicit solution, or to prove that no solution exists.
	
	So the algorithm either finds an explicit instability $(\E,w)$ for $X$, or it doesn't. In the latter case, the conclusion of Proposition \ref{prop:plane_curves} fails, which shows that $X$ is semistable.
\end{remark}

\begin{example} \label{exa:plane_curve}
	Let $K=\FF_3$, $d=5$ and
	\[
	F = x_0^3 x_2^2 + x_1^3 x_2^2 + x_1^5.
	\]
	Then $X=V_+(F)\subset\PP^2_K$ is an absolutely irreducible quintic, with a triple singularity at $P:=[0:0:1]$. In particular, $X$ does not contain a line, and hence we are neither in Case (a) nor in Case (c) of Proposition \ref{prop:plane_curves}. Since $d/2<m=3<2d/3$, we are also not in Case (b). The tangent cone of $X$ at $P$ is given by the equation
	\[
	x_0^3 + x_1^3 = ( x_0 + x_1 )^3,
	\]
	so it is the triple line $L:=V_+( x_0 + x_1 )$. Hence we may be in Case (d). Set $\E:=(y_0,y_1,y_2)$, with
	\[
	y_0:=x_0+x_1,\;\; y_1:=x_1,\;\; y_2:=x_2.
	\]
	With respect to the coordinate system $\E$, $P=[0:0:1]$ is still the triple singularity, but now $L=V_+(y_0)$. Furthermore,
	\[
	F = y_0^3y_2^2+y_1^5.
	\]
	One checks that $(\E,w)$, with
	\[
	w = (3,1,-4),
	\]
	is an instability for $X$, see the right hand side of \ref{fig:plane_curve_example}.
	
	In the above example, there is no instability for the standard coordinate system $(x_0,x_1,x_2)$, even though the point $P=[0:0:1]$ is a singularity of $X$ of multi\-pli\-city $>d/2$.
	This shows that the statements of Proposition \ref{prop:plane_curves} and of \cite[Corollary 7.4]{ES} are, although very similar, not equivalent.
\end{example}

\begin{figure}
	\[
	\begin{tikzpicture}
		

		
		\pgfmathsetmacro{\L}{3}	
		\pgfmathsetmacro{\d}{5}	
		\pgfmathsetmacro{\hstep}{\L/\d}
		\pgfmathsetmacro{\vstep}{\L*sqrt(3)/(2*\d)} 	
		\pgfmathsetmacro{\x}{3}	
		\pgfmathsetmacro{\y}{1}	
		\pgfmathsetmacro{\z}{-4}	
		

		
		\begin{scope}[blend mode=darken]
			
			\fill[cyan!55!white] (0,0) -- (\L*2/5,0) -- (\L/2*2/5,{\L/2*sqrt(3)*2/5});
			
			\fill[orange!55!white] (\L/2*2/5,{\L/2*sqrt(3)*2/5}) -- (0,0) -- (\L,0) -- (\L - \L/2*4/25,{\L/2*sqrt(3)*4/25});
			
		\end{scope}
		
		\draw[red,very thick] ({\L/2*( 1-( \z/(\z-\x) ) )},{\L/2*sqrt(3)*( 1-( \z/(\z-\x) )}) -- ({\L - \L/2*( 1-( \z/(\z-\y) ) )},{\L/2*sqrt(3)*( 1 - \z/(\z-\y) )}); 
		

		
		\foreach \i in {0,...,\d}
		\pgfmathsetmacro{\k}{\d-\i}	
		\foreach \j in {0,...,\k}
		\fill (\hstep/2*\i + \hstep*\j,\vstep*\i) circle (0.8pt);
		
		\fill (\hstep/2*2 + \hstep*0,\vstep*2) circle (1.5pt);
		\fill (\hstep/2*2 + \hstep*0,\vstep*2) node[left]{$x_0^3 x_2^2$};
		
		\fill (\hstep/2*2 + \hstep*3,\vstep*2) circle (1.5pt);
		\fill (\hstep/2*2 + \hstep*3,\vstep*2) node[right]{$x_1^3 x_2^2$};
		
		\fill (\hstep/2*0 + \hstep*5,\vstep*0) circle (1.5pt);
		

		
		\draw (0,0) -- (\L,0) -- (\L/2,{\L/2*sqrt(3)}) -- (0,0);
		

		\fill (0,0) node[left]{$x_0^5$};
		
		\fill (\L,0) node[right]{$x_1^5$};
		
		\fill (\L/2,{\L/2*sqrt(3)}) node[above]{$x_2^5$};

	\end{tikzpicture}
	\hspace{2cm}
	\begin{tikzpicture}
		

		
		\pgfmathsetmacro{\L}{3}	
		\pgfmathsetmacro{\d}{5}	
		\pgfmathsetmacro{\hstep}{\L/\d}
		\pgfmathsetmacro{\vstep}{\L*sqrt(3)/(2*\d)} 	
		\pgfmathsetmacro{\x}{3}	
		\pgfmathsetmacro{\y}{1}	
		\pgfmathsetmacro{\z}{-4}	
		

		
		\begin{scope}[blend mode=darken]
			
			\fill[cyan!55!white] (0,0) -- (\L*2/5,0) -- (\L/2*2/5,{\L/2*sqrt(3)*2/5});
			
			\fill[orange!55!white] (\L/2*2/5,{\L/2*sqrt(3)*2/5}) -- (0,0) -- (\L,0) -- (\L - \L/2*4/25,{\L/2*sqrt(3)*4/25});
			
		\end{scope}
		
		\draw[red,very thick] ({\L/2*( 1-( \z/(\z-\x) ) )},{\L/2*sqrt(3)*( 1-( \z/(\z-\x) )}) -- ({\L - \L/2*( 1-( \z/(\z-\y) ) )},{\L/2*sqrt(3)*( 1 - \z/(\z-\y) )}); 
		

		
		\foreach \i in {0,...,\d}
		\pgfmathsetmacro{\k}{\d-\i}	
		\foreach \j in {0,...,\k}
		\fill (\hstep/2*\i + \hstep*\j,\vstep*\i) circle (0.8pt);
		
		\fill (\hstep/2*2 + \hstep*0,\vstep*2) circle (1.5pt);
		\fill (\hstep/2*2 + \hstep*0,\vstep*2) node[left]{$y_0^3 y_2^2$};
		
		\fill (\hstep/2*0 + \hstep*5,\vstep*0) circle (1.5pt);
		

		
		\draw (0,0) -- (\L,0) -- (\L/2,{\L/2*sqrt(3)}) -- (0,0);
		

		\fill (0,0) node[left]{$y_0^5$};
		
		\fill (\L,0) node[right]{$y_1^5$};
		
		\fill (\L/2,{\L/2*sqrt(3)}) node[above]{$y_2^5$};

	\end{tikzpicture}
	\]
	\caption{Finding a semistability in Example \ref{exa:plane_curve}.} \label{fig:plane_curve_example}
\end{figure}
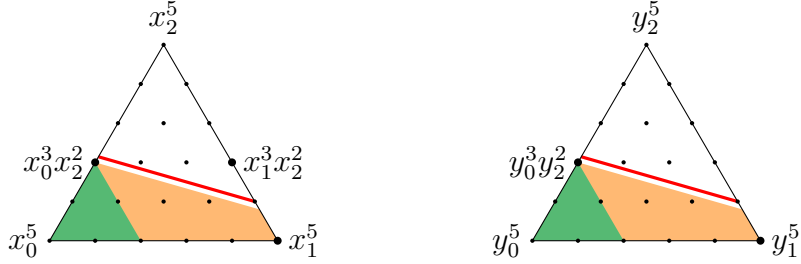

\subsection{The spherical complex} \label{subsec:spherical_complex}

As explained in \cite[\S 2.3]{MumfordGIT},
Problem \ref{prob:find_instability} can be translated into the problem of finding a point on the so-called \emph{spherical complex} $\Delta(G)$ where a certain function
\[
\nu(F,\,\cdot\,):\Delta(G)\to \RR,
\]
whose definition we recall below,
takes negative values. The \emph{stability function} we define in the next section, and which is the main actor of this article, is a natural generalization of the function $\nu(F,\,\cdot\,)$ to the case where the field $K$ is equipped with a nontrivial discrete valuation and the spherical complex $\Delta(G)$ is replaced by the Bruhat-Tits building of $G$. Although we do not use the complex $\Delta(G)$ and the function $\nu(F,\,\cdot\,)$ explicitly in the rest of this article, it is used implicitly in the proof of Proposition \ref{prop:stability_function_intro} (see in particular the proof of Lemma \ref{lem:stability_function4}). As we will see, it is also useful to prove the still-unproven Part (iv) of Proposition \ref{prop:plane_curves}.

\vspace{2ex}
Given a reductive algebraic group $G$,
the {\em spherical complex} $\Delta(G)$ is a certain simplicial complex whose simplices are the proper parabolic subgroups of $G$. In particular, the vertices correspond to the maximal parabolic subgroups. To each one-parameter subgroup $\lambda$ of $G$, we can associate a certain point $\delta=[\lambda]\in\Delta(G)$. Points of this type are called \emph{rational}. They form a dense subset of $\Delta(G)$ which contains all vertices. See \cite[\S 2.2]{MumfordGIT} or \cite{CLT}.

The general definition of $\delta=[\lambda]$ in \cite{CLT} is roughly the following. Let $T\subset G$ be a maximal split torus. It defines a subcomplex $A_T\subset \Delta(G)$ corresponding to the parabolic subgroups containing $T$, called an \emph{apartment}. One identifies the geometric realization of the complex $A_T$ with the set of all rays in the real vector space $X_*(T)\otimes\RR$. Then $\delta=[\lambda]$ is defined as the rays spanned by $\lambda\in X_*(T)$, where $T$ is some maximal split torus containing the image of $\lambda$.

Here we only consider the case $G=\SL_3(K)$. Then $\Delta(G)$ is actually a bipartite graph with the following very simple description, see e.g.\ \cite[\S 1]{everitt2014} or \cite[\S 6.5]{brown2008buildings}.
The vertices of $\Delta(G)$ of the first kind correspond to lines $L\subset\PP^2_K$, and the vertices of the second kind correspond to $K$-rational points $P\in\PP^2(K)$. Two vertices corresponding to a line $L$ and a point $P$ are joint by a unique edge if and only if $P\in L$.

Let $\E=(x_0,x_1,x_2)$ be a coordinate system and let $L_i\subset \PP^2_K$ be the line $x_i=0$, for $i=0,1,2$. Then the six vertices corresponding to the three lines $L_0,L_1,L_2$ and their points of intersection $P_0,P_1,P_2$ span a circular subgraph $A_\E$ of $\Delta(G)$, called the {\em spherical apartment} associated to $\E$, see Figure \ref{fig:spherical_apartment}.  It is easy to see that  $A_\E$ only depends on the maximal split torus of $G$ defined by $\E$. Also, the building $\Delta(G)$ is the union of all these apartments $A_\E$.

Let $\lambda$ be a one-parameter subgroup corresponding to a pair $(\E,w)$, as in \eqref{eq:ops_lambda}. Then the rational point $\delta=[\lambda]\in\Delta(G)$ lies on the apartment $A_\E$ and is determined as follows. If $w_1=w_2$ then $\delta$ is completely determined by the (degenerate) flag
\[
\{0\} \subsetneq \gen{x_0} \subsetneq V^*.
\]
Indeed, the stabilizer of this flag is the unique maximal parabolic subgroup of $G$ containing the image of $\lambda$. In terms of the above description of $\Delta(G)$ as a bipartite graph, this means that $\delta$ is the vertex corresponding to the line $L_0=V_+(x_0)$. Similarly, if $w_0=w_1$, then $\delta$ gives rise to the (degenerate) flag
\[
\{0\} \subsetneq \gen{x_0,x_1} \subsetneq V^*,
\]
and hence it is the vertex corresponding to the point $P_1=V_+(x_0,x_1)$.
If $w_0>w_1>w_2$, then $\delta$ gives rise to the full flag
\[
\{0\} \subsetneq \gen{x_0}\subsetneq\gen{x_0,x_1} \subsetneq V^*
\]
of $V^*$. The stabilizer of this flag is the parabolic subgroup of $G$ corresponding to the edge $e_0$ of $\Delta(G)$ connecting the vertices corresponding to $L_0$ and $P_1$. The point $\delta=[\lambda]$ lies in the interior of this edge. In fact, using a suitable parametrization $e_0\cong[0,1]$ of the edge $e_0$ we have
\begin{equation} \label{eq:edge_parametrization}
	\delta=[(\E,w)] =\frac{w_1-w_2}{w_0-w_2}.
\end{equation}

\begin{figure}[h]
	\begin{center}
		\begin{tikzpicture}[scale=1.0]
			
			\newcommand{\radius}{2}
			
			\coordinate (V0) at (0:\radius);    
			\coordinate (V1) at (60:\radius);   
			\coordinate (V2) at (120:\radius);  
			\coordinate (V3) at (180:\radius);  
			\coordinate (V4) at (240:\radius);  
			\coordinate (V5) at (300:\radius);  
			\coordinate (E)  at (27:1.15*\radius);
			
			\newcommand{\drawVertex}[3]{%
				\node[draw=black,
				fill=#3,
				circle,
				inner sep=2pt,
				label={#2}] at (#1) {};
			}
			
			\draw (0,0) circle (\radius);
			
			\drawVertex{V0}{right:{$L_0$}}{black}
			\drawVertex{V1}{above right:{$P_1$}}{white}
			\drawVertex{V2}{above left:{$L_1$}}{black}
			\drawVertex{V3}{left:{$P_0$}}{white}
			\drawVertex{V4}{below left:{$L_2$}}{black}
			\drawVertex{V5}{below right:{$P_2$}}{white}

			\node at (E) {$e_0$};
			
		\end{tikzpicture}
	\end{center}
	\caption{The spherical apartment $A_\E$.}\label{fig:spherical_apartment}
\end{figure}

Given this explicit description of the building $\Delta(G)$, it is easy to check that
\begin{equation} \label{eq:nu}
	\nu(F,[(\E,w])) := -\frac{1}{\norm{w}}\, \min_{\;i\in I_{F,\E}} \gen{i,w},
\end{equation}
defines a continuous function $\nu(F,\,\cdot\,):\Delta(G)\to\RR$. Theorem \ref{thm:numerical_criterion}  states that $X$ is semistable (resp.\ stable) if and only of the function $\nu(F,\,\cdot\,)$ only takes nonnegative (resp.\ only positive) values.

\vspace{2ex}
We are now ready to give a proof of the remaining Part (iv) of Proposition \ref{prop:plane_curves}.

\begin{proof} \label{page:proof_of_lemma}
	Let $n=2$ and $X=V_+(F)\subset \PP(V)$ be a plane curve of degree $d\geq 3$. We assume that $X$ is unstable, and that Statements (a) and (b) of Proposition \ref{prop:plane_curves} (i) are both false.

	By Theorem \ref{thm:numerical_criterion}, the subset
	\[
	C_F := \{ \delta\in\Delta(G) \mid \nu(F,\delta) < 0 \}
	\]
	of $\Delta(G)$ is nonempty. We claim that $C_F$ does not contain any vertex of $\Delta(G)$. To see this, assume first that $C_F$ contains a vertex corresponding to a line $L\subset\PP(V)$. In the above description of the points of $\Delta(G)$, this vertex is represented by an instability $(\E,w)$ with $w_1=w_2$. By the proof of Proposition \ref{prop:plane_curves} (i) this shows that Statement (a) holds for the line $L:=V_+(x_0)$, contrary to our assumption. A very similar argument, using our exclusion of Statement (b), shows that $C_F$ also cannot contain vertices corresponding to $K$-rational points of $\PP(V)$. This concludes the proof of the claim.
	
	By \cite[Corollary 2.16]{MumfordGIT}, $C_F$ is \emph{convex}, in the sense of  \cite[Definition 2.10]{MumfordGIT}. In our case this means that the intersection of $C_F$ with any apartment $A_\E$ is an open interval which does not contain any pair of opposite points. Since any two points of $\Delta(G)$ are contained in a common apartment, this also implies that $C_F$ is connected.
	As we have shown above, $C_F$ does not contain any vertices. We conclude that $C_F$ is an open interval contained in the interior of a unique edge of $\Delta(G)$.
	
	Let $e$ be an arbitrary edge of $\Delta(G)$, and let $(L,P)$ be the pair corresponding to $e$. Choose any coordinate system $\E=(x_0,x_1,x_2)$ such that $L=V_+(x_0)$ and $\{P\}=V_+(x_0,x_1)$. Then each rational point $\delta$ in the interior of $e$ is represented by $(\E,w)$, where $w$ is a balanced and and ordered weight vector with $w_0>w_1>w_2$. Furthermore, $\delta\in C_F$ if and only if $(\E,w)$ is an instability for $X$. And if this is the case, then the proof of Part (i) of Proposition \ref{prop:plane_curves} shows that one of the Statements (c) or (d) holds for $L$ and $P$. We see that Proposition \ref{prop:plane_curves} (iv) is equivalent to the claim that the subset $C_F$ is contained in the interior of a unique edge $e$ of $\Delta(G)$, and this is exactly what we have shown above.
\end{proof}

\subsection{Semi-instabilities} \label{subsec:semi-instabilities}

Assume that $X$ is semistable. A natural question, similar to Problem \ref{prob:find_instability}, is to (a) decide whether or not $X$ is stable, and (b) if $X$ is semistable but not stable, find one (or all) pairs $(\E,w)$ such that
\begin{equation} \label{eq:semi-instabilities1}
	\forall\,i\in I_{F,\E}:\;\gen{i,w} \geq 0,
\end{equation}
see Corollary \ref{cor:instability}). For $n=3$ it is possible to prove a result similar to Proposition \ref{prop:plane_curves} which gives a full answer to both (a) and (b). We will come back to this question in a subsequent article.

Let us call a pair $(\E,w)$, or the corresponding one-parameter subgroup $\lambda$, a \emph{semi-instability} for $X$ if \eqref{eq:semi-instabilities1} holds. Just as for instabilities, we may consider the subset
\[
\bar{C}_F := \{ \delta \in\Delta(G) \mid \nu(F,\delta)\leq 0\}.
\]
The assumption that $X$ is semistable but not stable means that $\bar{C}_F$ is precisely the set of zeroes of the function $\nu(F,\,\cdot\,)$. By \cite[Corollary 2.16]{MumfordGIT}, the set $\bar{C}_F$ is \emph{semi-convex}, in the sense of \cite[Definition 2.10]{MumfordGIT}. The following example shows that this result is a lot weaker than the convexity of $C_F$, which was an important ingredient in the proof of Proposition \ref{prop:plane_curves} (iv). For instance, $\bar{C}_F$ is in general not connected, and may even have infinitely many connected components.

\begin{example} \label{exa:semi-instabilities}
	Let $K$ be an infinite field, and let $C=V_+(Q)\subset\PP^2_K$ be a smooth plane conic with at least one $K$-rational point. We consider the plane quartic $X:=2\cdot C=V_+(Q^2)$. Then $X$ does not contain any line, and all of its $K$-rational points have multiplicity $2$. It follows from Proposition \ref{prop:plane_curves} that $X$ is semistable. We will show that $X$ is not stable and that the subset $\bar{C}_F\subset \Delta(G)$ where the function $\nu(F,\,\cdot\,)$ achieves its minimal value zero contains infinitely many isolated points and is not contained in a finite number of apartments.
	
	\begin{figure}[h]
		\centering
		\begin{tikzpicture}[scale=1]
			
			\coordinate (O) at (0,0);
			
			\newcommand{\radius}{2}
			
			\draw[name path=C] (O) circle (\radius);
			
			\pgfmathsetmacro{\anglePone}{30}
			\pgfmathsetmacro{\anglePtwo}{150}
			
			\coordinate (P1) at (\anglePone:\radius);
			\coordinate (P2) at (\anglePtwo:\radius);
			
			\node[fill=black,circle,inner sep=1pt,label={above right:$P_1$}] at (P1) {};
			\node[fill=black,circle,inner sep=1pt,label={above left:$P_2$}] at (P2) {};
			
			\draw[name path=L0, thick]
			($(P1)!-0.5!(P2)$) -- ($(P2)!-0.5!(P1)$) node[midway, above]{$L_0$};

			\path let \p1 = (P1) in coordinate (P1Normal) at (-\y1,\x1);
			\path let \p2 = (P2) in coordinate (P2Normal) at (-\y2,\x2);
			
			\draw[name path=L1, thick]
			($(P1)-1*(P1Normal)$) -- ($(P1)+2.5*(P1Normal)$);
			\draw[name path=L2, thick]
			($(P2)-2.5*(P2Normal)$) -- ($(P2)+1*(P2Normal)$);
			
			\path[name intersections={of=L1 and L2,by=P_0}];
			
			\node at ($(P1)!0.5!(P1)+(P1Normal)$) [above right] {$L_1$};
			\node at ($(P2)-(P2Normal)$) [above left] {$L_2$};
			
			\node[fill=black,circle,inner sep=1pt,label={left:$P_0$}] at (P_0) {};

		\end{tikzpicture}
		\caption{The three lines $L_0,L_1,L_2$ defined by two points on a conic.}\label{fig:double_conic}
	\end{figure}
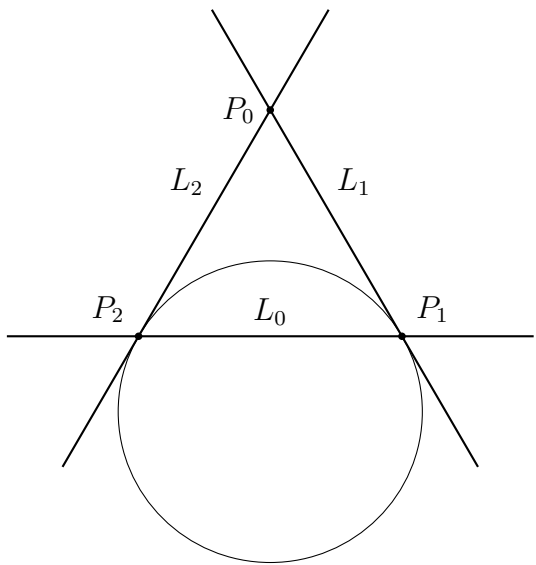

	Let $P_1,P_2\in C(K)$ be any pair of distinct $K$-rational points. Let $L_i$ be the tangent line to $C$ through $P_i$, and let $L_0$ be the line through $P_1$ and $P_2$. As the three lines are in general position, we may choose a coordinate system $\E=(x_0,x_1,x_2)$ such that $L_i$ is the line $x_i=0$, for $i=0,1,2$. See Figure \ref{fig:double_conic}. It is then easy to check that, after multiplying the $x_i$ by suitable constants, the equation for the conic $C$ is
	\[
	Q = x_0^2 + x_1x_2.
	\]
	So the equation for $X$ is
	\begin{equation} \label{eq:double_conic}
		F:=Q^2=x_0^4 + 2x_0^2x_1x_2 + x_1^2x_2^2,
	\end{equation}
	and
	\[
	\{ (4,0,0), (0,2,2)\}\subset I_{F,\E} \subset \{(4,0,0),(0,2,2),(2,1,1)\},
	\]
	depending on whether $K$ has odd or even characteristic.
	Clearly, \eqref{eq:semi-instabilities1} holds if and only if  $w=(0,m,-m)$, for some $m\in\ZZ$. In particular, the two pairs
	\begin{equation} \label{eq:semi-instabilites}
		((x_1,x_0,x_2), (1,0,-1)), \quad (x_2,x_0,x_1),(1,0,-1))
	\end{equation}
	are semi-instabilities. It follows that $X$ is not stable.
	
	Let $\delta_1,\delta_2\in\Delta(G)$ denote the two points corresponding to the pairs \eqref{eq:semi-instabilities1}. These are precisely the zeroes of the function $\nu(F,\,\cdot\,):\Delta(G)\to\RR$ on the apartment $A_\E$. Note that this does not contradict the claim that $\bar{C}_F$ is semi-convex, since $\delta_1$ and $\delta_2$ are \emph{antipodal} on the apartment $A_\E$ (\cite[Definition 2.8]{MumfordGIT}).
	
	The pair of zeroes $\delta_1,\delta_2$ depends on the choice of the pair of points $P_1,P_2\in X$. Conversely, the pair $\delta_1,\delta_2$ uniquely determine the pair $\delta_1,\delta_2$. To see this, note that $\delta_1$ lies in the interior of the edge connecting the vertices corresponding to the line $L_1$ and the point $P_1$. Similarly,  $\delta_2$ lies in the interior of the edge connecting the vertices corresponding to the line $L_2$ and the point $P_2$. Since $K$ is an infinite field, there are infinitely many pairs of points $P_1,P_2$, yielding an infinite set of isolated zeroes of the function $\nu(F,\,\cdot\,)$ on $\Delta(G)$.
	
	This example will be used in \S \ref{subsec:example2} to construct a smooth quartic over a discretely valued field with infinitely many nonisomorphic semistable models.
\end{example}

\section{Valuations and buildings}

In this section we define the Bruhat-Tits building $\B_K$ mentioned in the introduction as a space of homothety classes of valuations on the $K$-vector space $V^*$, following the work of Goldman and Iwahori (\cite{GoldmanIwahori}).

All this is well known (see e.g.\ \cite[\S 3]{ASENS_2011}, \cite[\S 2.2]{RTW15}, \cite{parreau_immeubles}, \cite[\S 1]{BKKUV}), so the purpose of this section is to fix our notation and to highlight the properties of $\B_K$ which are most relevant for us. We adopt the following conventions and assumptions, which differ from those in some of the articles referenced above:
\begin{itemize}
	\item
	We systematically use (additive) valuations instead of (multiplicative) norms.
	\item
	We assume that $K$ is discretely valued; this implies that it is spherically complete (see e.g.\ \cite[\S 2.4.4]{BGR}). As a result, we do not have to distinguish between diagonalizable and non-diagonalizable valuations (resp.\ norms). This considerably simplifies our notation relative to the reference given above.
	\item
	On the other hand, we do not assume that $K$ is a local field. As a result, the building $\B_K$ is in general not locally compact (with respect to the metric topology), making it harder to prove finiteness results.
	\item
	The projective space $\PP(V)$ is defined as the projective scheme ${\rm Proj}\, K[V]$, so that $H^0(\PP(V),\OO(1))=V^*$. As a result, we will mostly work with the vector space $V^*$, and not $V$. This is different from the convention used in \cite{ASENS_2011}, but agrees with \cite{BKKUV}.
\end{itemize}

\subsection{Spaces of valuations} \label{subsec:valuations}

Let $(K, v_K)$ be  a  complete discretely valued field. We write $\OO_K$ for the ring of integers of $K$,  $\pi=\pi_K$ for a uniformizing element, $k$ for its residue field and $\Gamma_K:=v_K(K^\times)\subset\RR$ for the value group.

We also fix a $K$-vector space $V$ of dimension $n+1$. We let $\N_K$ denote the set of all {\em $v_K$-valuations} on $V^*$, i.e.\ the set of maps
\[
v:V^*\to\RRb
\]
such that for all $x,y\in V^*$ and $c\in K$ we have
\begin{equation}
	\begin{split}
		v(x+y) \geq \min(v(x),v(y)), \\
		v(c\cdot x) = v_K(c) + v(x), \\
		v(x) = \infty \;\;\Leftrightarrow\;\; x=0.
	\end{split}
\end{equation}
Two valuations $v,v'\in\N_K$ are said to be {\em homothetic}, $v\sim v'$, if $v(x)-v'(x)$ is a constant, independent of $x\in V^*$. We set
\[
\B_K := \N_K/_\sim.
\]
We call $\N_K$ the \emph{Goldman-Iwahori space} and $\B_K$ the \emph{Bruhat-Tits building} of the $K$-vector space $V^*$.

For $v\in\N_K$ and $g\in\GL(V)$ we set
\[
\lexp{g}{v}:V^*\to\RRb, \quad x\mapsto v(\lexp{g^{-1}}{x}) = v(x\circ g)
\]
(note that we use the convention of treating $V^*$ as the contragrediant of $V$ as the natural representation of $\GL(V)$, as in \S \ref{subsec:stability_definitions}). This defines a natural left action of $\GL(V)$ on $\N_K$, which descents to an action of $\PGL(V)$ on $\B_K$.

\begin{definition} \label{def:vertices}
	A valuation $v\in\N_K$ is called \emph{integral} if $v(V^*\backslash\{0\})=\Gamma_K$.
	We denote the subset of $\N_K$ corresponding to integral valuations by $\N_K^\circ$, and its image in $\B_K$ by $\B_K^\circ$. A point $b=[v]\in\B_K^\circ$ is called a \emph{vertex}.
\end{definition}

\begin{remark}	\label{rem:vertices_and_lattices}
	If $v\in\N_K^\circ$ is integral, then
	\[
	L:=\{x\in V^* \mid v(x)\geq 0\}
	\]
	is a full $\OO_K$-lattice of the $K$-vector space $V^*$. We obtain a natural identification of the vertex set $\B_K^\circ$ with the set of homothety classes of lattices $L\subset V^*$.
\end{remark}

As recalled below (Remark \ref{rem:building} (v)), the set $\B_K$ as defined here, is the geometric realization of the simplicial complex with vertex set $\B_K^\circ$ defined in \S \ref{subsec:stability_function_intro}.

\subsection{$\B_K$ as an affine building}

We fix an integer $n\geq 1$ and set
\[
\EE_n := \RR^{n+1}/\RR\cdot(1,\ldots,1).
\]
We equip $\EE_n$ with the structure of euclidean vector space via the isomorphism
\[
(\RR\cdot(1,\ldots,1))^\perp = \{w=(w_0,\ldots,w_n)\mid w_0+\ldots+w_n=0\} \iso\EE_n.
\]

Let $\Lambda_K\subset\EE_n$ denote the image of $\Gamma_K^{n+1}\subset\RR^{n+1}$, which is a full lattice. Let $\overline{W}\subset\GL(\EE_n)$ be the group of linear isometries of $\EE_n$ generated by the automorphisms which are induced by a permutation of the coordinates. Then
\[
W_K := \Gamma_K\rtimes\overline{W}
\]
is a finite reflexion group, and the pair $(\EE_n,W_K)$ is an \emph{affine Coxeter complex, of type $A_n$}. In particular, $\EE_n$ is the geometric realization of a certain simplicial complex for which the lattice $\Lambda_K$ is the set of vertices. See e.g\ \cite{brown2008buildings}.

\begin{definition} \label{exa:diagonalized_valuation}
	Let $\E=(x_0,\ldots,x_n)$ be a basis of $V^*$, and $w=(w_0,\ldots,w_n)\in \RR^{n+1}$. We denote by $v_{\E,w}$ the valuation
	\[
	v_{\E,w}:	V^*\to\RR\cup\{\infty\}, \quad
	x=\sum_i a_ix_i \mapsto \min\{ v_K(a_i)+w_i \mid i=0,\ldots,n\}.
	\]
	We say that a valuation $v$ on $V^*$ is \emph{diagonalized} by the basis $\E$ if $v=v_{\E,w}$ for some $w\in\RR^{n+1}$.
\end{definition}

We recall the following well known fact.

\begin{proposition} \label{prop:v_diagonalize}
	Let $v,v'$ be two valuations on $V^*$. Then there exists a basis of $V^*$ which simultaneously diagonalizes $v$ and $v'$.
\end{proposition}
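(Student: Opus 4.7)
The plan is to argue by induction on $n+1 = \dim_K V^*$, with the base case $n = 0$ immediate since any nonzero vector forms a basis of $V^*$. The crucial property of $K$ used throughout is spherical completeness, which holds since $K$ is complete and discretely valued (cf.\ \cite[\S 2.4.4]{BGR}).

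Consider the scalar-invariant function
\[
\phi\colon V^* \setminus \{0\} \to \RR, \qquad \phi(x) := v(x) - v'(x),
\]
which satisfies $\phi(cx) = \phi(x)$ for $c \in K^\times$ and hence descends to $\PP(V^*)(K)$. The first step is to show that $\phi$ is bounded below on $V^* \setminus \{0\}$ and attains its infimum at some nonzero $e_0$. Boundedness below should come from a uniform comparison between $v$ and $v'$, reflecting the fact that every valuation in $\N_K$ induces the same topology on $V^*$ (the sup-norm topology relative to any basis), with spherical completeness producing explicit constants. Attainment is then a standard extraction argument: after rescaling a minimizing sequence to lie in a common closed $v'$-ball, spherical completeness produces a limit realizing the infimum.

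The second and technically most delicate step is to construct a hyperplane $H \subset V^*$ with $V^* = Ke_0 \oplus H$ which is simultaneously $(v,v')$-orthogonal to $Ke_0$, i.e.\ such that for all $a \in K$ and $y \in H$,
\[
v(ae_0 + y) = \min(v_K(a) + v(e_0),\, v(y)) \quad \text{and} \quad v'(ae_0 + y) = \min(v_K(a) + v'(e_0),\, v'(y)).
\]
For each $z \in V^* \setminus Ke_0$, the function $a \mapsto v(z - ae_0)$ on $K$ attains a maximum (again by spherical completeness), and likewise for $v'$; denote by $a_v(z), a_{v'}(z) \in K$ corresponding maximizers. The minimality of $\phi$ at $e_0$ is then to be used to force $a_v(z) = a_{v'}(z)$: any disagreement would allow one to translate $z$ by a scalar multiple of $e_0$ to produce a vector on which $\phi$ is strictly smaller, eventually falling below $\phi(e_0)$ and contradicting the global minimality. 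One then takes $H$ to be the space of common orthogonal representatives.

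Applying the induction hypothesis to the restrictions $v|_H$ and $v'|_H$ yields a basis $(e_1, \ldots, e_n)$ of $H$ diagonalizing both, and $(e_0, e_1, \ldots, e_n)$ is the desired basis of $V^*$. The main obstacle is the second step: proving that at a minimizer of $\phi$ the two orthogonal projections onto $Ke_0$ agree. This is complicated by the fact that the maximum of $v$ on $z + Ke_0$ may be attained on a nontrivial ball of scalars, so the projection is not strictly unique; one must choose the common scalar $a$ consistently as $z$ varies to ensure $H$ is a $K$-linear subspace. All remaining ingredients — boundedness, attainment, induction — are routine consequences of the spherical completeness of $K$.
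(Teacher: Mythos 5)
Your strategy---minimize $\phi = v - v'$, argue that at a minimizer $e_0$ the $v$- and $v'$-orthogonal projections onto $Ke_0$ can be chosen to agree, and induct---is the standard route to this result (the paper itself only cites \cite[Cor.~3.11]{parreau_immeubles} and gives no proof). The exchange argument underlying your second step is correct, and is in fact a one-shot contradiction rather than an iteration ``eventually falling below $\phi(e_0)$'': if $a_0$ maximizes $a\mapsto v(z-ae_0)$, $a_1$ maximizes $a\mapsto v'(z-ae_0)$, and neither is a maximizer for the other valuation, the strict ultrametric inequality forces
\[
v(z-a_1e_0)=v_K(a_1-a_0)+v(e_0),\qquad v'(z-a_0e_0)=v_K(a_1-a_0)+v'(e_0),
\]
whence $\phi(z-a_1e_0)=v_K(a_1-a_0)+v(e_0)-v'(z-a_1e_0)<v(e_0)-v'(e_0)=\phi(e_0)$, contradicting minimality. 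Boundedness of $\phi$ and attainment of the various extrema do follow from completeness and discreteness of the value group, as you indicate.

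The genuine gap is exactly the issue you label ``the main obstacle'' but leave unresolved. The set of $y\in V^*$ orthogonal to $e_0$ for even a single valuation is \emph{not} a $K$-linear subspace: for the Gauss valuation on $Ke_0\oplus Ke_1$, both $e_1$ and $e_0+e_1$ are orthogonal to $e_0$, but their difference $e_0$ is not. Hence ``the space of common orthogonal representatives'' is not a well-defined object, and saying one must ``choose $a$ consistently as $z$ varies'' names the difficulty without proving that a consistent linear choice exists or that the resulting $H$ is a simultaneously orthogonal complement. The clean repair is to place the induction on the quotient rather than on a subspace you have yet to construct: define the quotient valuations $\bar v,\bar v'$ on $V^*/Ke_0$ by $\bar v(\bar z):=\max_a v(z-ae_0)$ (this is where attainment is used), apply the inductive hypothesis there to get a basis $(\bar e_1,\ldots,\bar e_n)$ diagonalizing both $\bar v$ and $\bar v'$, and then invoke your exchange argument precisely to lift each $\bar e_i$ to $e_i\in V^*$ with $v(e_i)=\bar v(\bar e_i)$ \emph{and} $v'(e_i)=\bar v'(\bar e_i)$. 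One verifies directly, using $v\leq\bar v\circ\pi$ together with the ultrametric inequality, that $(e_0,e_1,\ldots,e_n)$ then diagonalizes both $v$ and $v'$; the simultaneous orthogonal complement $H=\gen{e_1,\ldots,e_n}$ emerges as a consequence rather than being the starting point. With that reorganization your outline closes.
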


\begin{proof}
	This is a special case of \cite[Cor. 3.11]{parreau_immeubles}. In case that $K$ is a local field, a more direct proof is given in \cite[Proposition 2.21]{RTW15}.
\end{proof}

For any basis $\E$ of $V^*$ we obtain an injective map
\[
\tilde{f}_\E:\RR^{n+1} \hookrightarrow \N_K, \quad w\mapsto v_{\E,w},
\]
and one easily checks that it induces an injective map
\[
f_\E:\EE_n\hookrightarrow \B_K, \quad [w]\mapsto [v_{\E,w}]
\]
The images
\[
\A_\E :=f_\E(\EE_n)\subset\B_K
\]
are called the {\em apartments} of $\B_K$. The map $f_\E$ is called a \emph{marked apartment}. By Proposition \ref{prop:v_diagonalize}, any two points on $\B_K$ lie on a common apartment. In particular, $\B_K$ is the union of all its apartments.

\begin{remark} \label{rem:building}
	\begin{enumerate}[(i)]
		\item
		The set $\B_K$, equipped with the family of marked apartments $f_\E$, is an {\em affine building of type $(\EE_n,W_K)$}, i.e.\ it satisfies the axioms (A 1-4) and (A 5') from \cite[\S 1.2]{parreau_immeubles}. This is proved in \cite[\S 3]{parreau_immeubles}.
		\item
		It follows from (i) and \cite[Proposition 1.3]{parreau_immeubles} that there is a unique metric $d:\B_K\times\B_K\to\RR_{\geq 0}$ on $\B_K$ which restricts to the euclidean metric on each apartment induced by the isomorphism $f_\E:\EE_n\iso\A_\E$. We will henceforth consider $\B_K$ as a metric space with this metric.
		\item
		If $K$ is a local field, then $\B_K$ is locally compact with respect to the topology induced by the metric $d$. But if the residue field $k$ is not finite, this is not true anymore.
		\item
		It is clear from the definitions that
		\[
		f_\E(\Lambda_K) = \A_\E\cap\B_K^\circ.
		\]
		So via $f_\E$ the vertices of the simplicial complex $(\EE_n,W_K)$ correspond to the vertices of $\B_K$, as defined in Definition \ref{def:vertices}. Explicitly, if $[v]\in\B_K^\circ$ is a vertex, then
		\[
		L_v:=\{x\in V^*\mid v(x) \geq 0\}
		\]
		is a full $\OO_K$-lattice of $V^*$. Its homothety class only depends on the point $[v]$.
		\item
		The simplicial structure on $(\EE_n,W_K)$ induces, via the marked apartments $f_\E$, a simplicial structure on $\B_K$. By (iv) the vertices of this simplicial structure are precisely the vertices of $\B_K$ from Definition \ref{def:vertices}. It is then easy to verify (see e.g.\ \cite[\S V.8]{brown2008buildings} or \cite[\S 2.2]{RTW15}) that we recover the simplicial complex with vertex set $\B_K^\circ$ defined in \S \ref{subsec:stability_function_intro}.
	\end{enumerate}
\end{remark}

\subsection{Extending the base field} \label{subsec:B_K_in_B_L}

Let $L/K$ be a finite field extension. Then the valuation $v_K$ extends to a unique, discrete valuation $v_L$ on $L$ with respect to which $L$ is also complete. We obtain an inclusion $\Gamma_K\subset\Gamma_L$ of discrete subgroups of $\RR$.

\begin{proposition}
	Let $L/K$ be a finite field extension and $v\in\N_K$. Consider the map $\tilde{v}:V_L^*\to\RRb$ defined by
	\[
	\tilde{v}(x) = \sup\; \min_i (v_L(a_i)+v(x_i)),
	\]
	where the supremum is defined over all representations $x=\sum_i a_ix_i$ with $a_i\in L$ and $x_i\in V$.
	Then $\tilde{v}\in\N_L$ and $\tilde{v}|_{V^*}=v$. Moreover, if $\E=(x_0,\ldots,x_n)$ is a $K$-basis of $V^*$ diagonalizing $v$, then we have
	\[
	\tilde{v}(a_0x_0+\ldots+a_nx_n) = \big(\min_i v_L(a_i) + v(x_i)\big),
	\]
	for all $(a_i)\in L^{n+1}$.
\end{proposition}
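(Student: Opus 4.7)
The plan is to define $\tilde{v}$ first via an explicit formula using a diagonalizing basis, prove the valuation axioms in that form, and only afterwards check that the supremum formula in the statement recovers the same function. Using Proposition \ref{prop:v_diagonalize} applied with $v' = v$, I would fix a $K$-basis $\E = (x_0, \ldots, x_n)$ of $V^*$ that diagonalizes $v$, so that $v(\sum_i c_i x_i) = \min_i(v_K(c_i) + v(x_i))$ for all $c_i \in K$. Since $(x_i)$ is also an $L$-basis of $V_L^* = V^* \otimes_K L$, I set
\[
\tilde{v}\bigl(\textstyle\sum_i a_i x_i\bigr) := \min_i \bigl( v_L(a_i) + v(x_i) \bigr), \qquad a_i \in L.
\]

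Next I would verify directly that $\tilde{v}$ is a $v_L$-valuation extending $v$. All three axioms reduce to coordinate-wise statements about $v_L$: the ultrametric inequality follows from $v_L(a_i + b_i) \geq \min(v_L(a_i), v_L(b_i))$, the scaling property from $v_L(c a_i) = v_L(c) + v_L(a_i)$, and nondegeneracy from the finiteness of each $v(x_i) \in \Gamma_K$. For the restriction $\tilde{v}|_{V^*} = v$, if $y = \sum_i c_i x_i \in V^*$ with $c_i \in K$, then $v_L(c_i) = v_K(c_i)$ and the defining formula for $\tilde{v}(y)$ becomes $\min_i(v_K(c_i) + v(x_i)) = v(y)$, by our choice of $\E$.

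It remains to identify $\tilde{v}$ with the supremum in the statement, which will simultaneously show that the sup is finite, is attained, and is independent of the auxiliary choice of diagonalizing basis. For any representation $x = \sum_j a_j y_j$ with $a_j \in L$ and $y_j \in V^*$, the valuation axioms already proved, together with $\tilde{v}(y_j) = v(y_j)$, yield
\[
\tilde{v}(x) \geq \min_j \tilde{v}(a_j y_j) = \min_j \bigl( v_L(a_j) + v(y_j) \bigr),
\]
so $\tilde{v}(x)$ is an upper bound for the set over which the supremum is taken. Conversely, the expansion $x = \sum_i a_i x_i$ in $\E$ realises this bound by the very definition of $\tilde{v}$. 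Hence the supremum equals $\tilde{v}(x)$ and is attained; the diagonalization formula in the statement is then just the defining formula for $\tilde{v}$ restated. The main obstacle I anticipate is the verification of the ultrametric inequality: attacking it directly from the sup formula is awkward because the indexing set ranges over all $L$-linear decompositions in the larger space $V_L^*$, whereas diagonalizing first reduces everything to a transparent coordinate-wise check and then transfers to the supremum for free.
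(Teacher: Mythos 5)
Your proof is correct. The paper's own ``proof'' of this proposition consists of the single sentence ``This is clear,'' so your write-up is simply the natural elaboration of the argument the authors take for granted: fix a diagonalizing basis (Proposition \ref{prop:v_diagonalize} with $v'=v$), define $\tilde v$ by the coordinate formula, check the three valuation axioms and the restriction property coordinate-wise, and then observe that the supremum in the statement is both bounded above by $\tilde v(x)$ (via the already-verified ultrametric and scaling properties together with $\tilde v|_{V^*}=v$) and attained by the expansion in $\E$, which in particular makes $\tilde v$ basis-independent. Every step is carried out correctly, and your remark that diagonalizing first sidesteps the awkwardness of proving finiteness of the supremum directly is exactly the right observation.
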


\begin{proof}
	This is clear.
\end{proof}

\begin{remark}
	The proposition defines an injection $\N_K\hookrightarrow \N_L$, $v\mapsto \tilde{v}$, which descends to an injection
	\begin{equation} \label{eq:B_K_to_B_L}
		\iota:\B_K\hookrightarrow \B_L.
	\end{equation}
	It has the following properties.
	\begin{enumerate}[(i)]
		\item
		It is equivariant with respect to the left actions of the groups $\GL(V)$ and $\GL(V_L)$.
		\item
		Let $\E$ be  a $K$-basis of $V^*$, and let $\E_L$ denote $\E$ considered as an $L$-basis of $V_L^*$. Then
		\[
		\iota\circ f_\E = f_{\E_L}.
		\]
		\item
		It follows from (ii) that the map $\iota$ is a metric embedding.
		\item
		It induces an injective map $\B_K^\circ\hookrightarrow\B_L^\circ$; on each apartment $\A\subset\B_K$, it corresponds to the natural inclusion of lattices
		\[
		\Lambda_K\cong\A\cap\B_K\hookrightarrow\iota(\A)\cap\B_L\cong\Lambda_L.
		\]
		\item 
		If $L/K$ is a Galois extension then $\Gal(L/K)$ acts on $\B_L$ via simplicial isometries, and this action commutes with the embedding $\iota$.
	\end{enumerate}
	All this justifies to consider $\B_K$ as a `sub-building' of $\B_L$ via $\iota$, with the caveat of (iv): with respect to the simplicial structure, $\iota(\B_K)$ is a refinement of $\B_K$.
\end{remark}

\section{The stability function} \label{sec:stability_function}

We continue with the notation and assumptions of the previous section. So $K$ is a complete discretely valued field and $V$ is a $K$-vector space of dimension $n+1$. To $V$ we attach the Goldman-Iwahori space $\N_K$ and its quotient space, the Bruhat-Tits building $\B_K$.

We also consider a homogenous form $F\in K[V]_d$ of degree $d\geq 3$, which defines a hypersurface $X:=V_+(F)\subset\PP(V)$. We assume that $X$ is stable, in the sense of Definition \ref{def:stability}. Our goal is to define and study a certain map
\[
\phi_X:\B_K\to \RR
\]
called the \emph{stability function} of $F$, see \S \ref{subsec:stability_function_intro}. In particular, we give a proof of Proposition \ref{prop:stability_function_intro}.

\subsection{Extension to a multiplicative valuation}
\label{subsec:embedding_into_P(V)^an}

A valuation $v\in\N_K$, which is a map $v:V^*\to\RRb$, has a natural extension to a valuation $v:K[V]\to\RRb$ on the \emph{ring} $K[V]$. In particular, for $F_1,F_2\in K[V]$ we have $v(F_1F_2)=v(F_1)+v(F_2)$. See e.g.\ \cite[Proposition 3.2]{ASENS_2011}. We recall this construction.

We choose a basis $\E=(x_0,\ldots,x_n)$ of $V^*$ which diagonalizes $v$, and define the extension of $v$ by
\begin{equation} \label{eq:v_tilde}
	v(\sum_i a_ix_0^{i_0}\cdots x_n^{i_n}) := \min_i \big(v_K(a_i) + i_0v(x_0)+\ldots+i_nv(x_n) \big).
\end{equation}
This is clearly a valuation on the ring $K[V]$ extending the original valuation $v$ which was only defined on $V^*\subset K[V]$. In fact, it is easy to see that it is the minimal extension of $v$ to a valuation on the ring $K[V]$. This shows that our definition does not depend on the choice of the basis $\E$. It is also clear that if $v_1,v_2\in \N_K$ are homothetic, then their extensions are homothetic, too. To be precise, if $v_1(x)=v_2(x)+c$ for all $x\in V^*$, then
\begin{equation} \label{eq:v_tilde2}
	v_1(F) = v_2(F) + d\cdot c
\end{equation}
for a form $F\in K[V]_d$ of degree $d$. For $g\in\GL(V)$, our conventions introduced in \S \ref{subsec:stability_definitions} and \S \ref{subsec:valuations} mean that
\begin{equation} \label{eq:v_tilde_g}
	(\lexp{g}{v})(F) = v(\lexp{g^{-1}}{F}).
\end{equation}

\subsection{Definition of the stability function}

We fix a $K$-basis $\E_0$ of $V^*$, and write $v_0:=v_{\E_0,0}\in\N_K$. Any $K$-basis $\E$ of $V^*$ is of the form $\E=g(\E_0)$, for a unique $g\in\GL(V)$. We set
\[
\det(\E) := \det(g) \in K^\times.
\]

We define a function
\begin{equation} \label{eq:omega_def}
	\omega:\N_K\to\RR, \quad \omega(v) := \max_{\E} \frac{1}{n+1}\Big(v(x_0)+\ldots+v(x_n) -v_K(\det \E)\Big),
\end{equation}
where $\E=(x_0,\ldots,x_n)$ runs over all $K$-bases of $V^*$.

\begin{lemma} \label{lem:omega}
	\begin{enumerate}[(i)]
		\item
		Let $\E=(x_0,\ldots,x_n)$ be a $K$-basis, $w=(w_0,\ldots,w_n)\in\RR^{n+1}$ and $v:=v_{\E,w}$. Then
		\[
		\omega(v) = \frac{1}{n+1}\big(w_0+\ldots+w_n -v_K(\det \E)\big),
		\]
		In other words: the maximum in \eqref{eq:omega_def} is attained by any basis $\E$ diagonalizing $v$.
		\item
		We have $\omega(v_0)=0$.
		\item
		For $v\in\N_K$ and $g\in GL(V)$ we have
		\[
		\omega(\lexp{g}{v}) = \omega(v) - \frac{v_K(\det g)}{n+1}.
		\]
		\item
		If $v\sim v'$ are homothetic, i.e.\ $v'(x)=v(x)+c$ for a constant $c\in\RR$, then
		\[
		\omega(v')=\omega(v) + c.
		\]
	\end{enumerate}
\end{lemma}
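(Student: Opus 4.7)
The plan is to treat (iv) and (iii) as easy bookkeeping, prove the substantive inequality (i) by combining the ultrametric triangle inequality with the Leibniz formula for the determinant, and obtain (ii) as a special case of (i).

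Part (iv) is transparent: if $v'=v+c$, then $v'(x_i)=v(x_i)+c$ for every basis element $x_i$, so each expression $\tfrac{1}{n+1}(\sum_i v(x_i)-v_K(\det \E))$ is shifted by $c$, and so is the supremum. For part (iii), the substitution $\E=(x_0,\ldots,x_n)\mapsto \E\circ g:=(x_0\circ g,\ldots,x_n\circ g)$ is a bijection on bases of $V^*$, and $(\lexp{g}{v})(x_i)=v(x_i\circ g)$ by definition of the action. If $\E=h(\E_0)$, then a short computation gives $\E\circ g=(g^{-1}h)(\E_0)$, so $v_K(\det \E)$ and $v_K(\det(\E\circ g))$ differ by $v_K(\det g)$ under the determinant convention fixed in \S\ref{subsec:stability_definitions}. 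Rewriting the supremum defining $\omega(\lexp{g}{v})$ via this substitution then recasts it as the supremum defining $\omega(v)$, shifted by $\tfrac{v_K(\det g)}{n+1}$.

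For part (i), the plan is to show that for every basis $\E'=(x_0',\ldots,x_n')$, the value $\sum_i v(x_i')-v_K(\det \E')$ is bounded above by $\sum_i w_i-v_K(\det \E)$, with equality when $\E'=\E$. Expanding each $x_i'=\sum_j A_{ji}x_j$ in the diagonalizing basis $\E=(x_0,\ldots,x_n)$ yields $v(x_i')=\min_j(v_K(A_{ji})+w_j)$, while $v_K(\det \E')$ and $v_K(\det \E)$ differ by $v_K(\det A)$. For any permutation $\sigma$ of $\{0,\ldots,n\}$, selecting the $\sigma(i)$-th term out of the min gives the bound $\sum_i v(x_i') \le \sum_i v_K(A_{\sigma(i),i}) + \sum_i w_i$; minimizing over $\sigma$ and invoking the ultrametric inequality applied to the Leibniz expansion $\det A=\sum_\sigma \mathrm{sgn}(\sigma)\prod_i A_{\sigma(i),i}$, which yields $v_K(\det A) \ge \min_\sigma \sum_i v_K(A_{\sigma(i),i})$, produces the required upper bound. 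Part (ii) is then the special case $v=v_0=v_{\E_0,0}$, $\E=\E_0$, $w=0$, in which both sides vanish.

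The hard part is (i): the termwise min-bound, the permutation sum in the Leibniz formula, and the weight sum must be coordinated so that the ultrametric inequality is invoked at exactly the right step, turning a one-sided inequality about the determinant into the desired bound on a sum of valuations. The remaining parts reduce to carefully keeping track of the action convention $\lexp{g}{x}=x\circ g^{-1}$ and the determinant convention on bases.
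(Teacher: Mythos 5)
Your argument is correct, and the most substantive part differs from the paper's proof in a way worth noting. For the key inequality in part (i) --- that for any basis $\E'=(x_0',\ldots,x_n')$ one has $\sum_i v(x_i') - v_K(\det\E') \le \sum_i w_i - v_K(\det\E)$ --- the paper simply invokes \cite[Lemma 3.2]{parreau_immeubles}, whereas you give a self-contained proof: expand $x_i'=\sum_j A_{ji}x_j$, pick off the $\sigma(i)$-th term from $v(x_i') = \min_j(v_K(A_{ji})+w_j)$ for each permutation $\sigma$, sum, optimize over $\sigma$, and then bound $\min_\sigma\sum_i v_K(A_{\sigma(i),i})$ by $v_K(\det A)$ via the ultrametric inequality applied to the Leibniz expansion of $\det A$. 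This is precisely the content of the cited lemma, so you are not taking a fundamentally different route, but you have unpacked the citation into a transparent, elementary argument, which is a genuine gain in readability. The remaining parts (ii)--(iv) are handled the same way as in the paper, namely as easy consequences of (i). One word of caution, which applies equally to the paper's own proof: the sign of $v_K(\det A)$ relative to $v_K(\det\E')-v_K(\det\E)$ depends on whether ``$\E=g(\E_0)$'' is read as a change-of-basis matrix on $V^*$ or as the contragredient action of $g\in\GL(V)$, and the text is not fully explicit about this; your argument for (i) and (ii) implicitly uses the change-of-basis reading (the only one for which $\omega(v_0)=0$ holds), and you should make sure the derivation of (iii) is carried out under the same convention rather than switching to the contragredient reading mid-proof.
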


\begin{proof}
	Let $\E=(x_0,\ldots,x_n),w,v$ be as in (i), and let $\E'=g(\E)$ be another basis, $g\in\GL(V)$. Then
	\begin{equation} \label{eq:omega1}
		v_K(\det\E') = v_K(\det\E) + v_K(\det g).
	\end{equation}
	Moreover, by \cite[Lemma 3.2]{parreau_immeubles} we have
	\begin{equation} \label{eq:omega2}
		v(\lexp{g}{x}_0)+\ldots+v(\lexp{g}{x}_n) \leq
		w_0+\ldots+w_n + v_K(\det g).
	\end{equation}
	Combining \eqref{eq:omega1} and \eqref{eq:omega2} shows Part (i) of the lemma. Parts (ii)-(iv) follow immediately.
\end{proof}

Let $F\in K[V]_d$ be a form of degree $d$. Up until the proof of Lemma \ref{lem:stability_function2} we do not need to make any stability assumptions. We define a map
\begin{equation} \label{eq:phi_X_def}
	\phi_X:\B_K\to\RR, \quad \phi_X([v]) := d\cdot \omega(v) - v(F),
\end{equation}
where $v(F)$ is defined by \eqref{eq:v_tilde}.
Lemma \ref{lem:omega} (iii) and \eqref{eq:v_tilde2} show that $\phi_X$ is well defined, i.e.\ the right hand side of \eqref{eq:phi_X_def} only depends on the homothety class of $v$.

\begin{definition}
	The map $\phi_X:\B_K\to\RR$ defined by \eqref{eq:phi_X_def} is called the {\em stability function} of the form $F$ (or of the hypersurface $X=V_+(F)$ defined by $F$).
\end{definition}

\begin{remark}
	For a nonzero constant $c\in K^\times$ and any $v\in\N_K$ we have
	\[
	\phi_{cF}([v]) = \phi_X([v]) - v_K(c).
	\]
	None of the properties of $\phi_X$ we are interested in is affected by adding a constant. We may therefore assume that $F$ is normalized such that $v_0(F)=0$. This means that $F$ has integral coefficients and is primitive when written as a polynomial in our standard basis $\E_0$. By Lemma \ref{lem:omega} (ii) we then have
	\begin{equation} \label{eq:phi_X_normalized}
		\phi_X([v_0])=0.
	\end{equation}
\end{remark}

\subsection{Proof of Proposition \ref{prop:stability_function_intro}} \label{subsec:proof_of_prop_1.5}

We are now going to prove that the function $\phi_X$ defined above has all the properties claimed in Proposition \ref{prop:stability_function_intro}. The proof is divided into four steps, Lemma \ref{lem:stability_function1}, \ref{lem:stability_function2}, \ref{lem:stability_function3} and \ref{lem:stability_function4}. The assumption that $X=V_+(F)$ is stable is only used from Lemma \ref{lem:stability_function2} on.

Let us first fix some notation. We set
\[
I_d := \{i=(i_0,\ldots,i_n) \mid i_j\in\NN_0,\; i_0+\ldots+i_n=d\}.
\]
For each $i\in I_d$ we consider the linear form $l_i:\RR^{n+1}\to\RR$ defined by
\begin{equation} \label{eq:linear_form_l_i}
	l_i(w):= \frac{d}{n+1}(w_0+\ldots+w_n) - (i_0w_0+\ldots +i_nw_n).
\end{equation}
By definition, it vanishes on the subspace $\RR\cdot(1,\ldots,1)$, and hence may be regarded as a linear form on $\EE_n$.

The following lemma proves that $\phi_X$ is uniformly continuous and most of Part (i) of Proposition \ref{prop:stability_function_intro}.

\begin{lemma} \label{lem:stability_function1}
	\begin{enumerate}[(i)]
		\item
		Let $\E=(x_0,\ldots,x_n)$ be a basis of $V^*$ and let $\phi_{F,\E}:=\phi_X\circ f_\E:\EE_n\to\RR$ denote the restriction of $\phi_X$ to the marked apartment $f_\E$. Write
		\[
		F = \sum_{i\in I_{F,\E}} a_i x_0^{i_0}\cdots x_n^{i_n},
		\]
		where $I_{F,\E}\subset I_d$ is the subset for which $a_i\neq 0$.
		Then
		\[
		\phi_{F,\E}(w) = \max_{i\in I_{F,\E}}(l_i(w)-v_K(a_i)) -\frac{d}{n+1} v_K(\det\E),
		\]
		for $w\in\RR^{n+1}$.
		\item
		The restriction of $\phi_X$ to each apartment $\A_\E$ is uniformly continuous, piecewise affine and convex.
		\item
		$\phi_X$ is uniformly continuous.
	\end{enumerate}
\end{lemma}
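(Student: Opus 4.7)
The plan is to prove the three parts in sequence, with (i) and (ii) being essentially computational and (iii) requiring a short uniformity argument across apartments.

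For part (i), I would substitute the definitions directly. Fix a basis $\E=(x_0,\ldots,x_n)$ and $w\in\RR^{n+1}$, and set $v:=v_{\E,w}$. By Lemma \ref{lem:omega}(i), $\omega(v)=\frac{1}{n+1}(w_0+\cdots+w_n-v_K(\det\E))$. Since $\E$ diagonalises $v$, the extension formula \eqref{eq:v_tilde} gives $v(F)=\min_{i\in I_{F,\E}}\bigl(v_K(a_i)+i_0w_0+\cdots+i_nw_n\bigr)$. Substituting both expressions into $\phi_F([v])=d\cdot\omega(v)-v(F)$ and converting the minimum into the negation of a maximum produces exactly the stated formula, with the linear forms $l_i$ of \eqref{eq:linear_form_l_i} naturally emerging.

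Part (ii) then follows immediately from the formula in (i). The restriction $\phi_{F,\E}$ is the pointwise maximum of finitely many affine functions in $w$ (indexed by $I_{F,\E}$), minus a constant, hence convex and piecewise affine. Each $l_i$ annihilates $\RR\cdot(1,\ldots,1)$ (since $\sum_j i_j=d=\frac{d}{n+1}(n+1)$), so descends to a well-defined linear functional on $\EE_n$; the maximum of finitely many Lipschitz affine functions is Lipschitz with constant $\max_{i\in I_{F,\E}}\|l_i\|_{\EE_n^*}$, which gives uniform continuity on $\A_\E$.

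For (iii), the decisive point is that the Lipschitz constant from (ii) can be chosen independently of the apartment. The linear forms $l_i$ depend only on $i\in I_d$ and on the integers $n,d$; they do not depend on $\E$ at all. Only the intercepts $-v_K(a_i)$ and the constant shift $-\frac{d}{n+1}v_K(\det\E)$ vary with $\E$, and these affect just the values, not the slopes. Hence every $\phi_{F,\E}$ is $L$-Lipschitz with the uniform bound $L:=\max_{i\in I_d}\|l_i\|_{\EE_n^*}$. Now given any two points $b_1,b_2\in\B_K$, Proposition \ref{prop:v_diagonalize} produces representatives that are simultaneously diagonalised by a common basis $\E$, so $b_1,b_2\in\A_\E$; and by Remark \ref{rem:building}(ii) the marked apartment $f_\E\colon\EE_n\iso\A_\E$ is an isometry. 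Combining these gives $|\phi_F(b_1)-\phi_F(b_2)|\leq L\cdot d(b_1,b_2)$, which establishes uniform (indeed Lipschitz) continuity on all of $\B_K$. The only genuine obstacle is recognising the apartment-independence of the slope data, which makes the uniform Lipschitz bound possible; once this is formulated correctly the argument is essentially transparent.
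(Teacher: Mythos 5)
Your proposal is correct and follows essentially the same route as the paper: substitute Lemma~\ref{lem:omega}(i) and formula~\eqref{eq:v_tilde} into the definition~\eqref{eq:phi_F_def} to get~(i), observe that this exhibits $\phi_{F,\E}$ as a maximum of finitely many affine functions whose linear parts $l_i$ depend only on $I_d$ (not on $\E$) to get a uniform Lipschitz bound, and then invoke Proposition~\ref{prop:v_diagonalize} to place any two points of $\B_K$ in a common apartment. Your additional explicit remarks (that each $l_i$ annihilates $\RR\cdot(1,\ldots,1)$ and that $f_\E$ is an isometry per Remark~\ref{rem:building}(ii)) are correct and merely fill in details the paper leaves implicit.
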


\begin{proof}
	Set $v:=v_{\E,w}$. Then by \eqref{eq:phi_X_def}, Lemma \ref{lem:omega} (i), \eqref{eq:v_tilde} and \eqref{eq:linear_form_l_i} we have
	\[\begin{split}
		\phi_X([v]) &= d\cdot\omega(v) - v(F) \\
		&= \frac{d}{n+1}\big(w_0+\ldots+w_n - v_K(\det\E)\big) - \min_{i\in I_d} (v_K(a_i) - \gen{i,w}) \\
		&= \max_{i\in I_d} \big( l_i(w) - v_K(a_i)\big) -\frac{d}{n+1} v_K(\det\E),
	\end{split}\]
	proving (i). In particular, the restriction of $\phi_X$ to each apartment is the maximum of a finite number of affine functions, from which (ii) follows immediately. Moreover, the linear parts of the affine functions are taken from the finite set of linear forms $l_i$, for $i\in I_d$, which is independent of $\E$. This shows that for $w,w'\in\EE_n$ we have
	\[
	\frac{\abs{\phi_{F,\E}(w) - \phi_{F,\E}(w')}}{\norm{w-w'}} \leq C,
	\]
	for a constant $C$ independent from $\E$.
	Since any two points on $\B_K$ lie on one apartment (Proposition \ref{prop:v_diagonalize}), this proves that $\phi_X$ is uniformly continuous. This completes the proof of Lemma \ref{lem:stability_function1}.
\end{proof}

The next lemma proves the remaining claim from Proposition \ref{prop:stability_function_intro} (i), and prepares the proof of Part (ii).

\begin{lemma} \label{lem:stability_function2}
	Assume that $X=V_+(F)$ is semistable. Then the following holds.
	\begin{enumerate}[(i)]
		\item
		There exists a constant $C$ such that for all $g\in\GL(V)$ we have
		\[
		v_0(\lexp{g}{F}) -\frac{d}{n+1}v_K(\det g) \geq C.
		\]
		\item
		The function $\phi_X$ is bounded from below.
		\item
		If $X=V_+(F)$ is stable, then the restriction of $\phi_{F,\E}$ to any apartment is radially unbounded.
	\end{enumerate}
\end{lemma}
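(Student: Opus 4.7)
The plan is to handle the three assertions in turn, treating (i) as the main analytic input and (ii)--(iii) as formal consequences.

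For (i), the strategy is to invoke the classical consequence of GIT semistability: there exists a nonzero homogeneous $\SL(V)$-invariant polynomial $I\colon W\to K$ of some positive degree $e$ with $I(F)\neq 0$. After rescaling I may assume $I$ has coefficients in $\OO_K$. Because $\SL(V)=\ker(\det)$ and the scalar $c\cdot\Id$ acts on $W=K[V]_d$ by $c^{-d}$, the invariant $I$ transforms under the full group $\GL(V)$ by a fixed power of $\det$; computing on scalars yields
\[
I(\lexp{g}{F}) \;=\; (\det g)^{-de/(n+1)}\,I(F), \qquad g\in\GL(V).
\]
Since $I$ is a polynomial of degree $e$ in the coefficients of its argument, one has the elementary bound $v_0(I(F'))\geq e\cdot v_0(F')+C_1$ for every $F'\in W$, where $C_1$ is the minimum $v_0$-valuation of a coefficient of $I$. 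Substituting $F'=\lexp{g}{F}$, equating the two expressions for $v_0(I(\lexp{g}{F}))$, and dividing by $e$ produces the required inequality, with constant $C$ expressed in terms of $v_0(I(F))$ and $C_1$.

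For (ii), I would first bound $\phi_F$ on the vertex set $\B_K^\circ$ and then propagate the bound using uniform continuity. Since $\GL(V)$ acts transitively on homothety classes of full $\OO_K$-lattices of $V^*$, every vertex is of the form $[\lexp{g}{v_0}]$ for some $g\in\GL(V)$. Unwinding the definition of $\phi_F$ with the help of Lemma \ref{lem:omega}\,(ii)--(iii) and \eqref{eq:v_tilde_g} yields an explicit expression for $\phi_F([\lexp{g}{v_0}])$ as a linear combination of $v_K(\det g)$ and $v_0(\lexp{g^{-1}}{F})$, to which part (i) (applied with $h=g^{-1}$) directly supplies a lower bound. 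Since $\phi_F$ is uniformly continuous by Lemma \ref{lem:stability_function1}\,(iii), and since every point of any apartment lies within the (bounded) diameter of a fundamental alcove of the lattice $\Lambda_K\subset\EE_n$ from some vertex, the bound on $\B_K^\circ$ transfers to a lower bound on all of $\B_K$.

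For (iii), I would exploit the explicit formula in Lemma \ref{lem:stability_function1}\,(i) exhibiting $\phi_{F,\E}$ as the maximum of finitely many affine functions with linear parts among $\{l_i\mid i\in I_{F,\E}\}$. For such a convex piecewise-affine function, failure of radial unboundedness in some direction $w^*\in\EE_n\setminus\{0\}$ is equivalent to $l_i(w^*)\leq 0$ for every $i\in I_{F,\E}$; passing to the balanced lift $w_0^*+\ldots+w_n^*=0$ this reads $\gen{i,w^*}\geq 0$ for every $i\in I_{F,\E}$. After clearing denominators so that $w^*$ is integral, the pair $(\E,w^*)$ defines a nontrivial one-parameter subgroup $\lambda$ of $G$ with nonnegative Hilbert--Mumford weight $\mu(F,\lambda)$, in direct contradiction with stability (Theorem \ref{thm:numerical_criterion}).

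The main obstacle sits in part (i): one must invoke GIT to produce the $\SL(V)$-invariant $I$ and verify that it can be chosen already over $K$. This is standard, since the ring of invariants is defined over the prime field and $I$ may be rescaled to have $\OO_K$-coefficients, but it is the one place where a genuinely nontrivial theorem is needed. The remainder of the lemma is bookkeeping with the $\GL(V)$-action on valuations and bases in part (ii), and a straightforward polyhedral conversion between affine functions and weight vectors in part (iii).
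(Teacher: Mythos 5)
Your proposal follows the paper's proof in all essentials: for (i), produce a nonzero $\SL(V)$-invariant $I$ with $I(F)\neq 0$ (this is where semistability enters), normalize it integrally, and combine the transformation law for $I$ under $\GL(V)$ with the elementary degree bound on $v_K(I(F'))$; for (ii), compute $\phi_F$ at a vertex $[\lexp{g}{v_0}]$ via Lemma~\ref{lem:omega} and~\eqref{eq:v_tilde_g}, feed in (i), and extend to all of $\B_K$ by uniform continuity; for (iii), read off from Lemma~\ref{lem:stability_function1}\,(i) that any balanced direction $w^*$ of non-increase of $\phi_{F,\E}$ yields a one-parameter subgroup with nonnegative $\mu(F,\lambda)$, contradicting stability via Theorem~\ref{thm:numerical_criterion}. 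The structure and all key inputs coincide with the paper's.

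The one place you should be more explicit is the sign bookkeeping in (i). You write $I(\lexp{g}{F}) = (\det g)^{-de/(n+1)}I(F)$, whereas~\eqref{eq:transformation_invariant} carries the opposite power of $\det g$; checking on scalars, where $\lexp{c\Id}{F}=c^{-d}F$, shows your sign is the one compatible with the paper's convention $\lexp{g}{F}=F\circ g^{-1}$. Carrying it through, the inequality your argument actually produces is
\[
v_0(\lexp{g}{F}) + \frac{d}{n+1}\,v_K(\det g) \;\leq\; C',
\]
which does not literally coincide with the inequality as printed in (i). It \emph{is}, however, exactly the estimate that the proof of (ii) consumes after substituting $g\mapsto g^{-1}$, provided one also uses the sign for $\omega(\lexp{g}{v})$ consistent with Lemma~\ref{lem:omega}\,(i); that combination delivers the desired lower bound on $\phi_F$ over $\B_K^\circ$. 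So the substance of your derivation is right, but rather than asserting it "produces the required inequality," you should note explicitly that the inequality you obtain differs in form from (i) as stated and verify that this is the version feeding into (ii). This is precisely the spot where a sign slip would flip the bound from a lower to an upper bound, so the check is not pedantic.

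Two minor notational points: write $v_K(I(F'))$ rather than $v_0(I(F'))$, since $I(F')$ is a scalar in $K$; and the normalization you invoke (``coefficients in $\OO_K$'') is the same as the paper's~\eqref{eq:normalize_invariant}, only phrased via the homogeneous bound $v_K(I(F'))\geq e\,v_0(F')+C_1$ rather than via the primitive representative $\tilde F$.
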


\begin{proof}
	For this proof we may assume that $v_K$ is normalized so that $\Gamma_K=\ZZ$.
	
	Let $I\in K[W]_r\backslash\{0\}$ be an $\SL(V)$-invariant of degree $r$, for the representation $W:=K[V]_d$. Then for any $g\in\GL(V)$ and $c\in K$ we have
	\begin{equation} \label{eq:transformation_invariant}
		I(c\cdot\lexp{g}{F}) = c^r\cdot \det(g)^{\frac{rd}{n+1}} I(F),
	\end{equation}
	see e.g.\ \cite[Lemma 4.5]{Kollar97}. Since we assume $X=V_+(F)$ to be semistable, there exists such an invariant $I$ with $I(F)\neq 0$, see \cite[Chapter 2]{MumfordGIT}. After multiplying $I$ with a suitable constant, we may also assume that for all $\tilde{F}\in W$ we have
	\begin{equation} \label{eq:normalize_invariant}
		v_0(\tilde{F})\geq 0 \quad\Rightarrow\quad v_K(I(\tilde{F})) \geq 0.
	\end{equation}
	To see this, note that the monomials of degree $d$ in the elements of our standard basis $\E_0$ form a basis of the $K$-vectors space $W=S^d(V^*)$. If we write $I$ as a polynomial in the dual of that basis, and take care that the coefficients of $I$ are in $\OO_K$, then \eqref{eq:normalize_invariant} holds.
	
	Let $g\in GL(V)$ be arbitrary. We set
	\[
	\tilde{F}:=\pi^{-m}\cdot\lexp{g}{F}, \quad m:=v_0(\lexp{g}{F}).
	\]
	Then $v_0(\tilde{F})=0$. Using \eqref{eq:normalize_invariant} and \eqref{eq:transformation_invariant} we obtain the inequality
	\begin{equation}
		0 \leq v_K(I(\tilde{F})) = -r\cdot v_0(\lexp{g}{F}) +\frac{rd}{n+1}v_K(\det g) + v_K(I(F)),
	\end{equation}
	from which we conclude
	\begin{equation} \label{eq:lower_bound}
		v_0(\lexp{g}{F}) - \frac{d}{n+1}v_K(\det g)  \geq -\frac{1}{r}v_K(I(F))=:C,
	\end{equation}
	for all $g\in GL(V)$, proving (i).
	
	Since $\phi_X$ is uniformly continuous by Lemma \ref{lem:stability_function1} (iii), it suffices for the proof of (ii) to show that $\phi_X$ is bounded from below on the set of vertices of $\B_K$. Any $v\in\N_K^\circ$ is of the form $v=\lexp{g}{v}_0$, for some $g\in\GL(V)$. Using Lemma \ref{lem:omega} (iii), \eqref{eq:v_tilde_g} and \eqref{eq:lower_bound} we obtain the inequality
	\[\begin{split}
		\phi_X([v]) &= \omega(\lexp{g}{v}_0) - (\lexp{g}{v}_0)(F) \\
		& = \omega(v_0) -\frac{d}{n+1}v_K(\det g) - v_0(\lexp{g^{-1}}{F}) \\
		&\geq C,
	\end{split}\]
	proving (ii).
	
	For the proof of (iii) we fix a basis $\E$. Recall the definition of the subset $I_{F,\E}\subset I_d$ from Lemma \ref{lem:stability_function1} (i), which is also used in \S \ref{subsec:stability_definitions}. Let $w=(w_0,\ldots,w_n)\in\RR^{n+1}\neq 0$ be balanced, i.e.\ $w_0+\ldots+w_n=0$. By Theorem \ref{thm:numerical_criterion} and \eqref{eq:stability2} there exists an element $i\in I_{F,\E}$ such that $l_i(w) = \gen{i,w}<0$. Using the formula for $\phi_{F,\E}$ from Lemma \ref{lem:stability_function1} (i) we see that
	\[
	\phi_{F,\E}(t\cdot w) \longrightarrow \infty \;\; \text{for $t\to-\infty$.}
	\]
	This shows that $\phi_{F,\E}$ is radially unbounded and concludes the proof of Lemma \ref{lem:stability_function2}.
\end{proof}

The next lemma proves Part (ii) of Proposition \ref{prop:stability_function_intro}.

\begin{lemma} \label{lem:stability_function3}
	We assume that $X=V_+(F)$ is stable. Then the function $\phi_X$ achieves a global minimum in a rational point of $\B_K$.
\end{lemma}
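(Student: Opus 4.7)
The strategy is to split the argument into two independent claims: (a) the infimum $m_F := \inf_{\B_K}\phi_F$ is attained on $\B_K$, and (b) any point in the minimizing set $M_F := \phi_F^{-1}(m_F)$ can be replaced by a rational point where $\phi_F$ still attains its minimum. Claim~(b) follows almost directly from the piecewise structure of $\phi_F$ on each apartment. Indeed, if $b^*\in M_F$ and $\A_\E$ is any apartment containing $b^*$, then $b^*$ must also minimize $\phi_{F,\E}$ on $\A_\E$. By Lemma~\ref{lem:stability_function1}~(i), $\phi_{F,\E}$ is the maximum of finitely many affine functions whose linear parts $l_i$ have integer coefficients and whose constant terms $v_K(a_i)$ lie in $\Gamma_K\subset\QQ$. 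The induced polyhedral subdivision of $\A_\E\cong\EE_n$ into regions of linearity is therefore rational; and the set of minimizers of $\phi_{F,\E}$ --- a nonempty compact set by Lemma~\ref{lem:stability_function2}~(iii) --- is a bounded rational polytope whose vertices are rational points of $\A_\E$.

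The bulk of the proof then reduces to claim~(a), which is also where I expect the main obstacle: $\B_K$ is typically not locally compact (Remark~\ref{rem:building}~(iii)), so no direct compactness argument applies to a minimizing sequence. I would overcome this with a \emph{uniform coercivity} estimate, namely a constant $\epsilon > 0$ such that, for a fixed base point $b_0\in\B_K$ and every $b\in\B_K$,
\[
\phi_F(b) \;\geq\; \phi_F(b_0) + \epsilon\cdot d(b_0, b) - C,
\]
where $C$ depends only on $F$ and $b_0$. To see this, observe via Lemma~\ref{lem:stability_function1}~(i) that on every apartment $\A_\E$, $\phi_{F,\E}$ is a maximum of affine functions whose linear parts are drawn from the fixed finite set $\{l_i : i \in I_d\}$; the support $I_{F,\E}\subseteq I_d$ can therefore take only finitely many possible values as $\E$ varies. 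Stability of $X$ together with Theorem~\ref{thm:numerical_criterion} yields the strict positivity $\max_{i\in I_{F,\E}} l_i(u) > 0$ for every nonzero balanced $u\in\EE_n$, so by continuity and compactness of the unit sphere each admissible $I\subseteq I_d$ produces a strictly positive infimum $\epsilon_I$; one then sets $\epsilon := \min_I \epsilon_I > 0$. Since any two points of $\B_K$ lie on a common apartment (Proposition~\ref{prop:v_diagonalize}), this apartment-wise growth upgrades to the stated global estimate.

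With uniform coercivity established, the sublevel sets $M_\delta := \{b\in\B_K : \phi_F(b)\leq m_F + \delta\}$ are nonempty (by definition of infimum), closed (by uniform continuity of $\phi_F$, Lemma~\ref{lem:stability_function1}~(iii)), bounded (by coercivity), and convex along geodesics: the last property holds because any geodesic in $\B_K$ lies in an apartment (Proposition~\ref{prop:v_diagonalize}) on which $\phi_F$ is convex (Lemma~\ref{lem:stability_function1}~(ii)). Since $\B_K$ is a complete CAT(0) space (a standard property of Bruhat--Tits buildings), the nested intersection $M_F = \bigcap_{\delta > 0} M_\delta$ is nonempty by the standard fact that decreasing families of nonempty, bounded, closed, geodesically convex subsets of a complete CAT(0) space have nonempty intersection. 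Any $b^*\in M_F$ is a global minimizer, and applying claim~(b) to $b^*$ produces a rational minimizer of $\phi_F$ on $\B_K$.
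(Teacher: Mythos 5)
You split the proof into attainment~(a) and rationality~(b) of the minimum. Your part~(b) is fine and is essentially what the paper does: on any apartment $\A_\E$ through a minimizer, $\phi_{F,\E}$ is a maximum of affine functions with linear parts from the fixed integral family $\{l_i\}_{i\in I_d}$ and constant terms in $\Gamma_K+\tfrac{d}{n+1}\Gamma_K\subset\QQ$, so its minimizing set is a rational polytope with rational vertices.

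For part~(a), however, your argument has a genuine gap and is a different route from the paper's. What stability and the compactness of the unit sphere give you uniformly is an \emph{asymptotic slope} bound: $\max_{i\in I_{F,\E}} l_i(u)\geq\epsilon>0$ for every basis $\E$ and unit vector $u$. You then assert that ``this apartment-wise growth upgrades to the stated global estimate'' $\phi_F(b)\geq\phi_F(b_0)+\epsilon\, d(b_0,b)-C$, but this step is not justified. Writing $\phi_{F,\E}(w)=\max_{i\in I_{F,\E}}\bigl(l_i(w)-c_{i,\E}\bigr)$ with $b_0$ normalised to the origin of $\A_\E$, the slope bound controls the recession function of $\phi_{F,\E}$, not where it bottoms out: one only knows $\min_i c_{i,\E}=-\phi_F(b_0)$, while the remaining constants $c_{i,\E}$ have no uniform upper bound as $\E$ varies among apartments through $b_0$. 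A convex piecewise-affine function can descend from $\phi_F(b_0)$ along an arbitrarily long valley (staying above $m_F$ but not above $\phi_F(b_0)-C+\epsilon\,d$) before the eventual slope-$\geq\epsilon$ growth sets in, so the constant $C$ may depend on $\A_\E$. Without a uniform $C$, the sublevel sets $M_\delta$ are not shown to be bounded, and the CAT(0) nested-intersection theorem, which requires nonempty \emph{bounded} closed convex sets, does not apply. (Be warned also that the paper remarks after Corollary~\ref{cor:stability_function_intro} that $M_F$ itself may fail to be compact when the residue field is infinite, so one should be cautious with compactness arguments here.)

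The paper's proof of~(a) uses a fundamentally different mechanism: it trades metric compactness for \emph{arithmetic discreteness}. By Lemma~\ref{lem:stability_function1}~(i), each apartment minimum $m_\E$ is the optimal value of a linear program whose constraint matrix has integer entries independent of $\E$ and whose right-hand side lies in a fixed discrete subgroup of $\QQ$ determined by $\Gamma_K$. Standard LP rationality then forces all the $m_\E$ into a single discrete subgroup $N\subset\QQ$; together with the lower bound of Lemma~\ref{lem:stability_function2}~(ii), the set $\{m_\E\}$ has a smallest element $m_F$, attained (at a rational point) on the corresponding apartment. This discreteness step is what your argument is missing, and it is what lets the paper avoid the boundedness question entirely.
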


\begin{proof}
	We first consider the restriction $\phi_{F,\E}$ of $\phi_X$ to the apartment $\A_\E$, for an arbitrary basis $\E$. By Lemma \ref{lem:stability_function1}, $\phi_{F,\E}$ is continuous and convex, and by Lemma \ref{lem:stability_function2} (iii) it is radially unbounded. The existence of a minimum $m_\E$ of the function $\phi_{F,\E}$ is therefore clear. Moreover, by the explicit formula for $\phi_{F,\E}$ from Lemma \ref{lem:stability_function1} (i) we see that the set of points $w\in\RR^{n+1}$ where this minimum is achieved is the solution set of a linear optimization problem of the form
	\[
	A\cdot(w,t)^T \leq b_\E, \quad \text{minimize $t$,}
	\]
	where $A$ is an $(\abs{I_d}, n+2)$-matrix with integral coefficients, independent of $\E$, and $b_\E$ is a vector with entries in a discrete subgroup $N_K\subset\QQ$, which is also independent of $\E$ (but depends on the group $\Gamma_K$). It is well known that if such an optimization problem has a solution, then it has an optimal solution $(w,m_\E)$ with entries in a discrete subgroup $N:=\frac{1}{c}N_K\subset\QQ$, and where $c\in\NN$ only depends on the matrix $A$.\footnote{The idea of the proof is that an optimal solution is always achieved in a vertex of the polytope defined by the inequality $A\cdot(w,t)^T\leq b_\E$, see \cite[\S 3.2]{ziegler_lectures}. Such a vertex corresponds to a solution of a linear system of equations given by a subset of the rows of the system of equations $A\cdot(w,t)^T=b_\E$.}
	
	So the set of minima $m_\E$, where $\E$ runs over all bases, is contained in the discrete group $N$, which is independent of $\E$. But it is also bounded from below, by Lemma \ref{lem:stability_function2} (ii). It follows that $\phi_X$ achieves a global minimum $m_X=\min_\E m_\E$. As each $m_\E$ is attained in a rational point, the lemma is proved.
\end{proof}

Finally, we prove the remaining Part (iii) of Proposition \ref{prop:stability_function_intro}.

\begin{lemma} \label{lem:stability_function4}
	Let
	\[
	M_X:= \{ b\in\B_K \mid \phi_X(b)=m_X\}.
	\]
	be the set where $\phi_X$ attains its global minimum $m_X$.
	Let $b\in\B_K^0$ be a vertex, corresponding to the hypersurface model $\X$. Then $\X$ is semistable if and only if $b\in M_X$. Moreover, $\X$ is stable if and only if $M_X=\{b\}$ over any finite extension $L/K$.
\end{lemma}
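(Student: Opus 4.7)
The plan is to interpret the directional derivatives of $\phi_F$ at the vertex $b$ as Hilbert--Mumford--Kempf invariants of the reduction $\overline{F}$, so that the numerical criterion (Theorem \ref{thm:numerical_criterion}) applied on the special fibre translates directly into the desired equivalences. Since $\phi_F$ is convex on every apartment (Lemma \ref{lem:stability_function1}) and any two points of $\B_K$ lie on a common apartment (Proposition \ref{prop:v_diagonalize}), $\phi_F$ is convex along every geodesic, so a local minimum is automatically a global minimum. It therefore suffices to understand the local behaviour of $\phi_F$ in every apartment through $b$.

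I would first fix an $\OO_K$-basis $\E=(x_0,\ldots,x_n)$ of the lattice $L_v$ attached to $b=[v]$ (cf.\ Remark \ref{rem:vertices_and_lattices}). Such a basis diagonalises $v$ with all weights zero, so $b=f_\E(0)$, and its reduction $\overline{\E}$ is a $k$-basis of $(L_v/\pi L_v)^*$ in which $\X_s$ is cut out by $\overline{F}=\overline{\pi^{-m}F}$ with $m=\min_i v_K(a_i)$; in particular $I_{\overline{F},\overline{\E}}=\{i\in I_{F,\E}\mid v_K(a_i)=m\}$. The formula from Lemma \ref{lem:stability_function1}(i) then yields, for $w$ in a small enough neighbourhood of $0$ in $\EE_n$,
\[
\phi_{F,\E}(w)-\phi_F(b) = \max_{i\in I_{\overline{F},\overline{\E}}} l_i(w),
\]
because the remaining indices contribute strictly less near $w=0$. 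The one-sided directional derivative of $\phi_{F,\E}$ at $0$ in a balanced direction $w$ is thus $\max_{i\in I_{\overline{F},\overline{\E}}}l_i(w)=-\min_{i\in I_{\overline{F},\overline{\E}}}\gen{i,w}=\mu(\overline{F},\lambda_w)$, where $\lambda_w$ is the one-parameter subgroup of $\SL(L_v/\pi L_v)$ with weight vector $w$ in the basis $\overline{\E}$.

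The main step is then to aggregate over all apartments through $b$. These apartments correspond to $\OO_K$-bases of $L_v$ (up to the Weyl group), and by Nakayama their reductions $\overline{\E}'$ exhaust all $k$-bases of $(L_v/\pi L_v)^*$, so that every one-parameter subgroup of $\SL(L_v/\pi L_v)$ arises as some $\lambda_w$ with respect to some $\overline{\E}'$. Combining this with the previous step, $b\in M_F$ (equivalently, $b$ is a local minimum of $\phi_F$) iff $\mu(\overline{F},\lambda)\leq 0$ for every one-parameter subgroup $\lambda$ of $\SL(L_v/\pi L_v)$, iff by Theorem \ref{thm:numerical_criterion} the special fibre $\X_s$ is semistable, iff $\X$ is semistable. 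For the stable case, replacing weak with strict inequalities shows that $b$ is a strict local minimum iff $\mu(\overline{F},\lambda)<0$ for every nontrivial $\lambda$ iff $\X$ is stable; and $M_F=\{b\}$ is equivalent to $b$ being a strict local minimum, since any other $b'\in M_F$ would, by convexity of $\phi_F$ on an apartment joining $b$ and $b'$, produce an entire segment of minimisers through $b$.

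The delicate point I anticipate is the combinatorial dictionary in step three, namely matching apartments of $\B_K$ based at $b$ with maximal split tori of $\SL(L_v/\pi L_v)$ and confirming that as $\E'$ varies over $\OO_K$-bases of $L_v$ the associated $\lambda_w$ really do exhaust all one-parameter subgroups of the residue group; once this dictionary is in place, the rest is a straightforward repackaging of Theorem \ref{thm:numerical_criterion}.
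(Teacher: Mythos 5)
Your proposal is correct and follows essentially the same route as the paper: reduce to local behaviour at $b$ via convexity on apartments, identify the index set of the reduction $\overline{F}_b$ with $\{i \in I_{F,\E} : v_K(a_i) = \min_j v_K(a_j)\}$, compute the one-sided directional derivative, and apply the numerical criterion to $\X_s$ over $k$. The only presentational difference is that you phrase the key computation as an identification of the directional derivative with $\mu(\overline{F},\lambda_w)$ and invoke Theorem \ref{thm:numerical_criterion} directly, while the paper phrases it as the monotonicity of $f_{\E,w}(t) = \phi_{F,\E}(tw)$ and invokes Corollary \ref{cor:instability}; these are equivalent. The ``delicate point'' you flag is indeed exactly the hidden step in the paper's proof, and your proposed resolution is the right one: bases $\E$ with $v = v_{\E,0}$ are precisely the $\OO_K$-bases of $L_v$, Nakayama shows these reduce onto all $k$-bases of $L_v/\pi L_v$, and over the (perfect) field $k$ every one-parameter subgroup of $\SL_{n+1}$ lies in a maximal split torus, hence is $\lambda_w$ for some such basis and integral weight vector.
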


\begin{proof}
	Let $b\in\B_K^\circ$ be a vertex of $\B_K$. Since any two points of $\B_K$ lie in a common apartment (Proposition \ref{prop:v_diagonalize}), $\phi_X$ attains its global minimum $m_X$ in $b$ if and only if the restrictions $\phi_{F,\E}$ attain their minimum $m_\E$ in $b$, for every apartment $\A_\E$ containing $b$.
	
	Fix one apartment $\A_\E$ containing $b$. We may assume that $b=[v]$, with $v=v_{\E,0}$. Then $\phi_X(b)=\phi_{F,\E}(0)$. As $\phi_{F,\E}$ is convex by Lemma \ref{lem:stability_function1} (ii), $\phi_{F,\E}(0)$ is minimal if and only if for all $w=(w_0,\ldots,w_n)\neq 0$ the function
	\[
	f_{\E,w}:\RR_{\geq 0}\to\RR, \quad f_{\E,w}(t):=\phi_{F,\E}(t\cdot w)
	\]
	is nondecreasing on some interval $[0,\epsilon)$, with $\epsilon>0$ (and then it is nondecreasing on $\RR_{\geq 0}$). Here we may assume that $w_0+\ldots+w_n=0$.
	
	Write $\E=(x_0,\ldots,x_n)$ and $F=\sum_{i\in I_d} a_ix_0^{i_0}\cdots x_n^{i_n}$.
	By Lemma \ref{lem:stability_function1} (i) we have
	\begin{equation} \label{eq:f(t)}
		f_{\E,w}(t) = \max_{i\in I_{F,\E}} \big( \gen{i,w} - v_K(a_i) \big),
	\end{equation}
	where the subset $I_{F,\E}\subset I_d$ contains the $i\in I_d$ such that $v_K(a_i)\neq 0$. In particular,
	\begin{equation} \label{eq:f(0)}
		f_{\E,w}(0) = -\min_{i\in I_{F,\E}} v_K(a_i) = - v(F).
	\end{equation}
	Consider the subset
	\[
	\bar{I}_{F,\E} := \{ i\in I_{F,\E} \mid v_K(a_i)=v(F)\}.
	\]
	Then \eqref{eq:f(t)} shows that $f_{\E,w}$ is nondecreasing on some interval $[0,\epsilon)$ if and only if
	\begin{equation} \label{eq:Fbar_is_semistable}
		\gen{i,w} \geq  0,\quad \text{for all $i\in \bar{I}_{F,\E}$.}
	\end{equation}
	On the other hand, the hypersurface model $\X$ of $X$ corresponding to $b$ is given by the equation $F_b:=\pi^{-v(F)}F\in \OO_K[x_0,\ldots,x_n]$, and its special fiber $\X_s$ is therefore given by the equation $\overline{F}_b\in k[x_0,\ldots,x_n]$, the reduction of $F_b$. By the definition of the subset $\bar{I}_{F,\E}$ we have
	\[
	\overline{F}_b = \sum_{i\in \bar{I}_{F,\E}} \bar{a}_i x_0^{i_0}\cdots x_n^{i_n}, \quad \text{with $\bar{a}_i\neq 0$.}
	\]
	Now Corollary \ref{cor:instability} shows that $\X_s=V_+(\overline{F}_b)$ is semistable if and only if \eqref{eq:Fbar_is_semistable} holds for all bases $\E$ such that $v=v_{\E,0}$ and for all $w=(w_0,\ldots,w_n)\neq 0$ with $w_0+\ldots+w_n=0$. As we have already shown, the latter holds if and only if $\phi_X$ attains its global minimum in $b$.
	
	It remains to prove that $\X_s$ is stable if and only if $b$ is an isolated minimum that persists under any finite extension $L/K$. We can use basically the same argument as before, with the following modifications. The hypersurface $\X_s=V_+(\overline{F}_b)$ is stable if and only if \eqref{eq:Fbar_is_semistable} holds with strict inequality, for all $w$ and all bases $\E$ of $V^* \otimes_K L$ for all finite extensions $L/K$. This condition is satisfied if and only if the function $f_{\E,w}$ is strictly increasing on some interval $[0,\epsilon)$ for all $\E$ and $L$. Clearly, this is equivalent to $b$ being an isolated minimum of the function $\phi_X$ considered on $\B_L$ for any finite extension $L/K$.
	
	This concludes the proof of Lemma \ref{lem:stability_function4} and of Proposition \ref{prop:stability_function_intro}.
\end{proof}

\subsection{Finding the minimum}\label{subsec:finding_minimum}

Assume that we have a solution of Problem~\ref{prob:find_instability} at our disposal.
Then the proof of Proposition~\ref{prop:stability_function_intro} suggests an explicit
procedure for finding the minimal value $m_X$ of the stability function $\phi_X$ on
$\B_K$ and a rational point $b\in\B_K(\QQ)$ where this minimum is attained. Here is a brief sketch of this algorithm. Full details will be given in \cite{KletusDiss}, and an implementation is available in \cite{KletusGitHub}.

One starts with an arbitrary apartment
$\A\subset\B_K$ and determines the subset $M_X\cap \A$ by linear optimization, using
that $\phi_X|_{\A}$ is an explicit piecewise affine and convex function
(Lemma \ref{lem:stability_function1} (i)).
One then chooses a rational point $b\in M_X\cap\A$ and (formally) passes to a finite
ramified extension $L/K$ such that $\iota(b)\in\B_L$ becomes a vertex, in order to compute the special fiber $\X_{b,s}$ of the hypersurface model $\X_b$.\footnote{It is not necessary to actually choose and work with the extension $L/K$. One can compute the special fiber $\X_{b,s}$ -- which is independent of the choice of $L/K$ -- using the concept of {\em graded reduction} (\cite{KletusDiss}).}

Using a solution to Problem~\ref{prob:find_instability}, one can now decide whether
$\X_b$ is semistable. If this is the case, the algorithm terminates, since $b$ already
lies in $M_X$. Otherwise, an explicit instability of $\X_{b,s}$ is produced.
Such an instability corresponds canonically to a point of the spherical building, which may be
identified with a direction in the tangent cone of $\B_K$ at $b$ along which
$\phi_X$ strictly decreases (see the proof of Lemma \ref{lem:stability_function4}). This direction determines a new apartment
$\A'\subset\B_K$ containing $b$ on which the minimum of $\phi_X$ is strictly smaller
than on $\A$.

One then replaces $\A$ by $\A'$ and repeats the procedure.
The algorithm must terminate after finitely many steps, since $\phi_X$ is bounded
from below and the successive improvements of the minimum occur in a discrete set
(Lemma \ref{lem:stability_function3}).

\begin{remark}
	As described above, the algorithm does not determine the full set $M_X\subset \B_K$ where $\phi_X$ achieves its minimum. If the residue field $k$ is finite -- and hence $K$ is locally compact -- an easy improvement of the above algorithm can do this. But in general, this is a delicate issue, as the set $M_X$ may not be contained in any finite union of apartments. See \S \ref{subsec:example3} for an explicit example illustrating this phenomenon.	
\end{remark}

\subsection{The tame case} \label{subsec:tame_case} 

In general, the algorithm from \S \ref{subsec:finding_minimum} gives only a partial solution to Problem \ref{prob:semistable_model}. More precisely, if $X$ has a semistable model over $\OO_K$ then the algorithm finds it -- it corresponds to a vertex of $\B_K$ lying inside the set $M_X$. If the set $M_X$ contains no vertex, then no semistable model exists over $K$, and we have to pass to a suitable  finite extension $L/K$. If $L/K$ is sufficiently ramified, then $M_X$ contains a point which becomes a vertex in $\B_L$. But this vertex may not be a minimum of the stability function on the larger building $\B_L$, and we have to continue with the algorithm over the field $L$. It is not obvious how to choose the extension $L/K$ such that the process of choosing successive extensions terminates. 

We are going to show that this is not a problem if $X$ has a semistable model over an extension $L/K$ which is at most tamely ramified (see Theorem \ref{thm:tame_case_intro} from the introduction). In particular, if the residue characteristic of $K$ is zero, then our first algorithm completely solves Problem \ref{prob:semistable_model} (assuming we have a solution for Problem \ref{prob:find_instability}). The general case is much more involved and discussed further in \S \ref{sec:stable_minimum} below.

We recall the terminology introduced in the introduction.
A point $b\in\B_K$ at which the stability function $\phi_X$ attains its minimum
$m_{X,K}$ is called a \emph{stable minimizer} if, for every finite extension $L/K$,
its image $\iota(b)\in\B_L$ is still a minimizer of $\phi_X$. Then $m_{X,K}$ is called the {\em stable minimum} of $\phi_X$. 
By Proposition~\ref{prop:stable_minimum} (i), the existence of a semistable hypersurface
model over a finite extension $L/K$ implies that $m_{X,L}$ is the
stable minimum of $\phi_X$. Part (ii) of the proposition states that a rational stable minimizer which becomes a vertex after a finite extension $L/K$ corresponds to a semistable model of $X$ over $L$. 

We briefly indicate the proof of Proposition~\ref{prop:stable_minimum},
since it will be used repeatedly in the sequel.

\begin{proof}
	(i) Let $X_L$ have a semistable hypersurface model over $\OO_L$, corresponding to a vertex
	$v\in\B_L^\circ$. Then $\phi_X(v)=m_{X,L}$ by Proposition~\ref{prop:stability_function_intro}(iii).
	For any finite extension $L'/L$, the base change of a semistable model is again semistable
	(GIT semistability of the special fiber is preserved under base change), hence the image of
	$v$ in $\B_{L'}$ is again a vertex in the minimum locus of $\phi_X$ on $\B_{L'}$.
	Therefore $m_{X,L'}\le \phi_X(v)=m_{X,L}$. Since always $m_{X,L'}\ge m_{X,L}$ by functoriality of
	$\phi_X$, we get $m_{X,L'}=m_{X,L}$ for all $L'/L$.
	
	(ii) By definition, $\iota(b)$ is a minimizer of $\phi_X$ on $\B_L$. Since it is a vertex, the
	corresponding hypersurface model is semistable by Proposition~\ref{prop:stability_function_intro}(iii).
\end{proof}

\begin{lemma} \label{lem:tame_case}
	Let $L/K$ be a finite Galois extension, with Galois group $\Gamma:=\Gal(L/K)$. Then
	\[
	M_L\cap\B_L^\Gamma \neq \emptyset,
	\]
	i.e.\ $M_L$ contains a point fixed by $\Gamma$.
\end{lemma}

\begin{proof}
	The action of $\Gamma$ on $\B_L$ preserves $\phi_X$, hence preserves $M_L$.
	Moreover, $M_L$ is closed and convex because every geodesic segment in
	$\B_L$ is contained in an apartment and $\phi_X$ is convex on apartments.
	The lemma follows now from the Bruhat-Tits Fixed-Point Theorem (\cite[\S VI.4]{brown2008buildings}).
\end{proof}

\begin{theorem} \label{thm:tame_case}
	Assume that $X$ has a semistable hypersurface model
	over a tamely ramified finite extension.
	Then every minimizer $b\in \B_K$ of $\phi_X$
	is a stable minimizer.
\end{theorem}

\begin{proof}
	Let $L/K$ be a finite and tamely ramified extension over which $X$ has a semistable model. We may assume that $L/K$ is Galois and set $\Gamma:=\Gal(L/K)$. By Proposition \ref{prop:stable_minimum} (i), $m_{X,L}$ is the stable minimum for $\phi_X$.
	
	By Lemma \ref{lem:tame_case}, $M_L\cap\B_L^\Gamma$ is nonempty. Moreover, by the theorem of Rousseau--Prasad~\cite{prasad01}, the natural embedding $\iota:\B_K\hookrightarrow\B_L$ induces an isomorphism
	\[
	\B_K\liso \B_L^\Gamma.
	\]
	It follows that there exists $b\in\B_K$ such that $\phi_X(b)=m_{X,L}$.
	We conclude that $m_{X,K}=m_{X,L}$ is the stable minimum of $\phi_X$, which proves the theorem.
\end{proof}

Combining Theorem \ref{thm:tame_case} with Proposition \ref{prop:stable_minimum} (ii) and Theorem \ref{thm:semistable_reduction}, we immediately obtain the following result, corresponding to Theorem \ref{thm:tame_case_intro} from the introduction.

\begin{corollary} \label{cor:tame_case}
	Assume that the residue field $k$ has characteristic $0$.
	Let $b\in \B_K(\QQ)$ be a rational point where $\phi_X$ attains its minimum.
	Let $L/K$ be a finite extension such that the image of $b$ in $\B_L$ is a vertex.
	Then $b$ corresponds to a semistable hypersurface model of $X_L$ over $\OO_L$.	
\end{corollary}

\section{Existence of a stable minimum} \label{sec:stable_minimum}

The goal of this section is to prove Theorem~\ref{thm:global_minimum}, which asserts
the existence of a global minimum 
\[
\min_{L/K} m_{X,L}
\]
of the stability function $\phi_X$ among all finite
extensions $L/K$. As a direct consequence, we obtain a new proof of the semistable
reduction theorem (Theorem~\ref{thm:semistable_reduction}).

Our proof does not use Geometric Invariant Theory. Instead, it relies on two elementary
ingredients: a variant of the maximum principle from non-archimedean analysis
(Lemma~\ref{lem:maximum_principle}), and a combinatorial lemma from polyhedral
geometry (Lemma~\ref{lem:l_i_inequalities}). These results form the theoretical basis
for the second stage of our approach, which ensures the existence of a field extension
over which the stability function attains a stable minimum. When combined with the
algorithm from \S~\ref{subsec:finding_minimum}, this leads to a complete and practical
algorithm for solving Problem~\ref{prob:semistable_model} for plane curves; see
\cite{KletusDiss} for details and \cite{KletusGitHub} for an implementation.

\subsection{A variant of the maximum principle} \label{subsec:affinoids}

Let $X=\Spec A$ be an affine variety over $K$.
We let $X^\rig$ denote the rigid-analytic $K$-space attached to $X$, as in \cite[\S 5.4]{BoschRigid}. The underlying set of $X^\rig$ is simply the set of closed points of the scheme $X$.

Given a point $x\in X^\rig$, the residue field $L:=K(x)$ is a finite extension of $K$, which is naturally equipped with the
unique discrete valuation $v_L$ extending $v_K$. Therefore, the point $x\in X^\rig$ induces a map
\[
v_x:A\to\RR\cup\{\infty\}, \quad f \mapsto v_L(f(x)).
\]

\begin{lemma} \label{lem:maximum_principle}
	Let $f_1,\ldots,f_N\in A$, and let $U\subset X^\rig$ be an affinoid subdomain. Then the restriction of the function
	\[
	\alpha:X^\rig\to\RR\cup\{\infty\}, \quad
	\alpha(x) := \min\big(v_x(f_1),\ldots,v_x(f_N)\big),
	\]
	to $U$ assumes its maximum. This maximum is finite if and only if the $f_i$ do not have a common zero in $U$.
\end{lemma}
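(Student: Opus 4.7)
The plan is to reduce to the classical maximum modulus principle for affinoid algebras via an admissible rational covering of $U$. First I dispatch the case where the $f_i$ have a common zero $x_0 \in U$: then $v_{x_0}(f_i) = \infty$ for every $i$, so $\alpha(x_0) = +\infty$ attains the maximum trivially. Conversely, once the no-common-zero case is settled, we will see that the maximum is finite in that case, so the maximum is $+\infty$ if and only if a common zero exists.

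Assume now that the $f_i$ have no common zero in $U$. Cover $U$ by the $N$ rational subdomains
\[
U_i := \{x \in U \mid v_x(f_j) \geq v_x(f_i) \text{ for all } j = 1, \ldots, N\}, \quad i = 1, \ldots, N.
\]
These are affinoid and form a finite, hence admissible, covering of $U$, because at any $x \in U$ some index $i$ achieves $\min_j v_x(f_j)$. On $U_i$ we have $\alpha(x) = v_x(f_i)$ by construction, so it suffices to show that $v_x(f_i)$ attains its (finite) maximum on $U_i$ at a rigid point, and then compare the $N$ resulting values.

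The key observation is that $f_i$ is a unit in the affinoid algebra $B_i := \mathcal{O}(U_i)$: any zero of $f_i$ at a rigid point $x \in U_i$ would force $v_x(f_j) \geq v_x(f_i) = \infty$ for all $j$, making $x$ a common zero of all $f_i$, contradicting our assumption. Hence $1/f_i \in B_i$, and the classical maximum modulus principle applied to $1/f_i$ produces a rigid point $x_i \in U_i$ where $|1/f_i|$ attains its supremum, equivalently where $v_x(f_i) = -v_x(1/f_i)$ attains its maximum on $U_i$. Taking $x^*$ among $\{x_1, \ldots, x_N\}$ that maximizes $\alpha(x_i)$ produces the desired rigid point where $\alpha$ attains its global maximum on $U$, and this maximum is finite.

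The main step deserving care is the passage from $f_i$ being non-vanishing at every rigid point of $U_i$ to $f_i$ being an actual unit of the affinoid algebra $B_i$; this is a standard consequence of the affinoid Nullstellensatz, but is what ultimately makes the argument work. Everything else is a careful bookkeeping of the rational cover $\{U_i\}$ and the translation between additive valuations $v_x$ and multiplicative norms $|\cdot|$.
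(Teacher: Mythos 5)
Your proof is correct. The paper itself does not give an argument here; it simply cites \cite[\S 7.3.4, Lemma 7]{BGR}, noting only the translation between additive valuations and absolute values. What you have done is reconstruct the underlying proof: cover $U$ by the rational subdomains $U_i = \{ v_x(f_j) \ge v_x(f_i) \ \forall j \}$, observe that on $U_i$ the function $\alpha$ reduces to $v_x(f_i)$, use the affinoid Nullstellensatz to conclude that $f_i$ is invertible on $U_i$ (any zero of $f_i$ in $U_i$ would be a common zero of all the $f_j$, contrary to hypothesis), apply the classical maximum modulus principle to $1/f_i \in \OO(U_i)$, and then compare the $N$ local maxima. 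This is the natural direct argument and is essentially the proof behind the cited lemma. One tiny point you could make explicit: if some $f_i$ vanishes identically on $U$, then by your definition $U_i$ is precisely the common zero locus of all the $f_j$, which is empty under the no-common-zero hypothesis, so it drops out of the cover harmlessly and the Nullstellensatz step never sees a zero divisor. With that observation your argument is complete, and in particular it correctly yields finiteness of the maximum and the converse direction (a common zero forces $\alpha = \infty$ at that point).
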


\begin{proof}
	This is \cite[\S 7.3.4, Lemma 7]{BGR} (as we work with additive valuations instead of absolute values, the roles of $\min$ and $\max$ relative to {\em loc.cit.} are inverted.)
\end{proof}

\subsection{Some inequalities} \label{subsec:inequalities}

We fix $n\geq 1$ and $d\geq 3$. Recall the definition of the linear forms
\[
l_i(w):= \frac{d}{n+1}(w_0+\ldots+w_n) - (i_0w_0+\ldots +i_nw_n)
\]
on $\RR^{n+1}$, for $i=(i_0,\ldots,i_n)\in I_d$, introduced in \S \ref{subsec:proof_of_prop_1.5}.

\begin{lemma} \label{lem:l_i_inequalities}
	There exist nonnegative integers $ e_{i,k}\in\NN_0$, for $i\in I_d$ and $k=1,\ldots,N$, and a positive integer $e>0$ such that the following holds:
	\begin{enumerate}[(i)]
		\item
		For a tuple $(t_i)\in\RR^{I_d}$,
		\begin{equation} \label{eq:l_i_inequalities}
			t_i\geq l_i(w)
		\end{equation}
		holds for all $i\in I_d$ and some $w\in\RR^{n+1}$, independent of $i$, if and only if
		\[
		\sum_{i\in I_d}  e_{i,k} t_i \geq 0, \quad k=1,\ldots,N.
		\]
		\item
		For all $k$,
		\[
		e= \sum_{i\in I_d} e_{i,k}.
		\]
	\end{enumerate}
\end{lemma}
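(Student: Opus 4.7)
The plan is to recognise condition (i) as a parametric linear feasibility problem and to dualise it via Farkas' lemma. Set
\[
L := \{(l_i(w))_{i \in I_d} \mid w \in \RR^{n+1}\} \subset \RR^{I_d},
\]
a linear subspace defined over $\QQ$. The inequalities $t_i \geq l_i(w)$ are simultaneously solvable in $w$ if and only if $(t_i) \in L + \RR_{\geq 0}^{I_d}$, and this set is a rational polyhedral cone in $\RR^{I_d}$ since $L$ and $\RR_{\geq 0}^{I_d}$ are both rational.

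Next I would apply Farkas' lemma (equivalently, Minkowski--Weyl duality): an element $(t_i)$ lies in $L + \RR_{\geq 0}^{I_d}$ if and only if $\sum_{i\in I_d} c_i t_i \geq 0$ for every $c$ in the dual cone
\[
E := \RR_{\geq 0}^{I_d} \cap L^\perp = \Big\{ c \in \RR_{\geq 0}^{I_d} \;\Big|\; \sum_{i \in I_d} c_i\, l_i = 0 \Big\}.
\]
Unwinding the definition of $l_i$, the condition $\sum_i c_i l_i = 0$ amounts to $\sum_i c_i\, i_j = \frac{d}{n+1}\sum_i c_i$ for each $j = 0,\ldots, n$, so $E$ is cut out inside $\RR_{\geq 0}^{I_d}$ by finitely many rational linear equations. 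It is therefore a rational polyhedral cone, finitely generated by its extreme rays, and each extreme ray has a unique primitive integer generator. Taking these primitive generators as $e_{\cdot,1},\ldots,e_{\cdot,N} \in \NN_0^{I_d}$ yields (i).

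For (ii), each generator $e_{\cdot,k}$ is a nonzero vector with nonnegative integer entries, so its coordinate sum $S_k := \sum_i e_{i,k}$ is a \emph{positive} integer. Setting $e := \operatorname{lcm}(S_1,\ldots,S_N)$ and replacing each $e_{\cdot,k}$ by $(e/S_k)\cdot e_{\cdot,k}$ gives new integer vectors spanning the same extreme rays, and now with common coordinate sum $e$; this rescaling does not affect the equivalence established in (i), since scaling an inequality by a positive integer is harmless. The underlying duality is standard polyhedral geometry, and the only slightly delicate point is the normalisation in (ii); the key observation making it work is that positivity of $S_k$ is automatic from $e_{\cdot,k} \in \NN_0^{I_d} \setminus \{0\}$, so a common multiple of the $S_k$ exists and the rescaling is legitimate.
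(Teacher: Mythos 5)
Your argument is correct and arrives at the same conclusion as the paper, but by a different route. The paper sets $P = \{(t,w) : t_i \geq l_i(w)\ \forall i\}$ and invokes the fact that the projection $Q = p(P) \subset \RR^{I_d}$ of a rational polyhedron is again a rational polyhedron (essentially Fourier--Motzkin elimination), then proves $e_{i,k} \geq 0$ by a separate observation that $Q$ is upward closed. You instead recognise $Q = L + \RR_{\geq 0}^{I_d}$ explicitly and pass to its dual cone $E = \RR_{\geq 0}^{I_d} \cap L^{\perp}$ via Farkas/Minkowski--Weyl. The two are of course closely related, but your route buys something concrete: the nonnegativity of the $e_{i,k}$ is immediate from $E \subset \RR_{\geq 0}^{I_d}$ rather than needing a further argument, and you get a clean explicit description of the dual cone in terms of the linear conditions $\sum_i c_i\, i_j = \frac{d}{n+1}\sum_i c_i$ for each $j$. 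The normalisation for (ii) is handled identically in both proofs (coordinate sums are positive, take the lcm, rescale). One edge case worth a half-sentence if you want full rigour: the argument tacitly assumes $N \geq 1$, i.e.\ $E \neq \{0\}$; this holds here because $c = (1,\ldots,1)$ always lies in $E$ (by symmetry $\sum_{i \in I_d} i_j$ is independent of $j$), but the paper's phrasing glosses over this in the same way, so it is not a defect of your proof relative to theirs.
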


\begin{proof}
	Let $P\subset \RR^{I_d}\times\RR^{n+1}$ be the set of tuples $(t,w)$ such that \eqref{eq:l_i_inequalities} holds for all $i\in I_d$. Let
	$Q:=p(P)\subset\RR^{I_d}$ be the image of $P$ under the projection $p:\RR^{I_d}\times\RR^{n+1}\to\RR^{I_d}$. Then $(t_i)\in Q$ if and only if \eqref{eq:l_i_inequalities} holds for all $i\in I_d$ and some $w\in\RR^{n+1}$, independent of $i$.
	
	As $P$ is defined by finitely many rational linear inequalities, it is a rational polyhedron (\cite[Lecture 1]{ziegler_lectures}). It is shown in \S 1.2 of \emph{loc.cit.} that $Q$ is again a rational polyhedron and therefore defined by linear inequalities
	\[
	\sum_{i\in I_d}  e_{i,k}t_i \geq 0,\quad k=1,\ldots,N,
	\]
	with $ e_{i,k}\in\ZZ$. Without loss of generality, we may assume that for all $k$ there exists at least one $i\in\ I_d$ such that $e_{i,k}\neq 0$. Looking at the inequalities \eqref{eq:l_i_inequalities} defining $Q$ we see that for $(t_i)\in Q$ and $(t_i')\in\RR^{I_d}$ with $t_i\leq t_i'$ for all $i$ we also have $(t_i')\in Q$. This implies that $ e_{i,k}\geq 0$, for all $i,k$.
	
	Set
	\[
	e_k := \sum_{i\in\I_d} e_{i,k}, \quad k=1,\ldots,N
	\]
	and $e:={\rm lcm}_k(e_k)$. Clearly $e_k,e>0$. After multiplying $e_{i,k}$ by $e/e_k$ (which does not affect the validity of (i)), (ii) holds as well.
\end{proof}

\subsection{Proof of Theorem \ref{thm:global_minimum}} \label{subsec:proof_of_global_minimum}

In this section, we regard the group $G:=\SL_{n+1}$ as an algebraic group over $K$. Thus,
\[
G = \Spec A, \quad A = K[g_{\mu,\nu} \mid 0\leq \mu,\nu\leq n]/(\det(g_{\mu,\nu})-1).
\]

We fix a homogeneous form $F\in K[x_0,\ldots,x_n]$ of degree $d$ which defines a stable hypersurface $X=V_+(F)\subset\PP^n_K$.  Let $L/K$ be an arbitrary field extension and $g=(g_{\mu,\nu})\in G(L)$. Write
\[
\lexp{g}{F} := F\circ g^{-1} = \sum_{i\in I_d} \tilde{a}_i(g)x_0^{i_0}\cdots x_n^{i_n} \in L[x_0,\ldots,x_n]_d.
\]
Then the functions
\[
\tilde{a}_i:G(L)\to L
\]
are polynomials over $K$ in the coefficients $g_{\mu,\nu}$ of $g$. In other words, we may regard $\tilde{a}_i$ as elements of the Hopf algebra $A$ of $G$.

Set
\begin{equation} \label{eq:f_k}
	f_k := \prod_{i\in I_d} \tilde{a}_i^{ e_{i,k}} \in A, \quad k=1,\ldots,N,
\end{equation}
where $e_{i,k}\in\NN_0$ are the nonnegative integers from Lemma \ref{lem:l_i_inequalities}. By Part (ii) of this lemma,
\begin{equation} \label{eq:e_k}
	e:=\sum_{i\in I_d} e_{i,k}
\end{equation}
is positive and independent from $k$. We consider the function
\[
\alpha:G^\rig\to\RR\cup\{\infty\}, \quad
\alpha(g) :=\min\big(v_g(f_1),\ldots,v_g(f_N)\big),
\]
as in Lemma \ref{lem:maximum_principle}.

We fix a basis $\E_0$ of $V^*$ and let $\A_0:=\A_{\E_0}$ denote the corresponding apartment of $\B_K$. Let $L/K$ be a finite extension and $g\in G(L)\subset G^\rig$. Then $\E_g:=g(\E_0)$ is the basis of $V_L^*$ defining the apartment $\A_g:=g(\A_0)$ of $\B_L$. Let $m_g$ denote the minimum that the stability function $\phi_X$ attains on $\A_g$ (by Lemma \ref{lem:stability_function2}).

\begin{lemma} \label{lem:minimum}
	For $g\in G(L)$ as above we have
	\[
	m_g = -\alpha(g)/e.
	\]
\end{lemma}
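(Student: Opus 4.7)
The plan is to unfold both sides of the asserted equality into explicit expressions in the numbers $v_L(\tilde{a}_i(g))$ and match them via the Farkas-type duality packaged in Lemma \ref{lem:l_i_inequalities}. First I would apply Lemma \ref{lem:stability_function1}(i) to compute the restriction of $\phi_F$ to the apartment $\A_g$. Since $g \in \SL_{n+1}(L)$, we have $v_L(\det\E_g) = v_L(\det\E_0)$, and after normalising so that $v_L(\det\E_0) = 0$ the determinant correction in that formula vanishes. A short bookkeeping check against the defining identity $\lexp{g}{F} = F \circ g^{-1} = \sum_i \tilde{a}_i(g) x_0^{i_0}\cdots x_n^{i_n}$ shows that the coefficients of $F$ expressed in the basis $\E_g = g(\E_0)$ are precisely the $\tilde{a}_i(g)$. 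Setting $t_i := v_L(\tilde{a}_i(g))$ for $i \in I_d$, Lemma \ref{lem:stability_function1}(i) then yields
\[
m_g = \min_{w \in \RR^{n+1}} \max_{i \in I_d}\bigl(l_i(w) - t_i\bigr).
\]

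Next, for any $M \in \RR$, the inequality $m_g \leq M$ is equivalent to the existence of a single $w \in \RR^{n+1}$ satisfying $l_i(w) \leq M + t_i$ simultaneously for every $i \in I_d$. Writing $T_i := M + t_i$, this is precisely the membership $(T_i) \in Q$, where $Q$ is the projected polyhedron appearing in the proof of Lemma \ref{lem:l_i_inequalities}. By Part (i) of that lemma, such membership is equivalent to $\sum_i e_{i,k} T_i \geq 0$ for every $k = 1,\ldots,N$; using Part (ii), $\sum_i e_{i,k} = e$, this reduces to
\[
eM \geq -\sum_{i \in I_d} e_{i,k}\, t_i, \qquad k = 1,\ldots,N.
\]
By the definition of the $f_k$ in \eqref{eq:f_k} we have
\[
\sum_{i \in I_d} e_{i,k}\, t_i = v_L\Bigl(\prod_{i \in I_d} \tilde{a}_i(g)^{e_{i,k}}\Bigr) = v_L(f_k(g)) = v_g(f_k),
\]
so the whole system collapses to $M \geq -\alpha(g)/e$. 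The smallest admissible $M$ is therefore $-\alpha(g)/e$, which gives $m_g = -\alpha(g)/e$ as required.

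The only real obstacle is notational rather than mathematical: one must carefully track the convention $\lexp{g}{F} = F\circ g^{-1}$ together with the contragredient action on $V^*$ so that the coefficients of $F$ appearing in Lemma \ref{lem:stability_function1}(i) when applied to $\A_g$ really are the $\tilde{a}_i(g)$ (and not, say, $\tilde{a}_i(g^{-1})$), and verify that the $\SL$-hypothesis kills every determinant contribution. Once this bookkeeping is carried out, the lemma is essentially a one-line translation of Lemma \ref{lem:l_i_inequalities}, which was designed precisely to dualise this min--max optimisation problem.
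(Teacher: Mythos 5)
Your proposal is correct and takes essentially the same approach as the paper's proof: both expand $\phi_F\vert_{\A_g}$ via Lemma \ref{lem:stability_function1}(i), recast the bound $m_g\leq M$ as membership of a translated tuple in the projected polyhedron of Lemma \ref{lem:l_i_inequalities}, and use the definition of the $f_k$ together with $\sum_i e_{i,k}=e$ to convert this to $M\geq-\alpha(g)/e$ (the paper merely parametrises by $s:=-eM$ instead of $M$). Your closing caveat about $g$ versus $g^{-1}$ is real but harmless: tracking the contragredient convention shows the coefficients of $F$ in the basis $\E_g$ are in fact $\tilde{a}_i(g^{-1})$, yet $\SL_{n+1}$ is stable under inversion so nothing downstream changes, and the paper's own formula~\eqref{eq:minimum2} carries the same ambiguity.
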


\begin{proof}
	In order to prove the lemma, it suffices to show that
	\begin{equation} \label{eq:minimum1}
		m_g \leq -\frac{s}{e} \quad \Leftrightarrow\quad
		\alpha(g)\geq s.
	\end{equation}
	By Lemma \ref{lem:stability_function1} (i), the restriction of $\phi_X$ to $\A_g\cong\RR^{n+1}/\RR\cdot(1,\ldots,1)$ is given by the formula
	\begin{equation} \label{eq:minimum2}
		\phi_X(w) = \max_{i\in I_d} \big( l_i(w) - v_g(\tilde{a}_i) \big),
	\end{equation}
	where $w\in \RR^{n+1}$ and the linear form $l_i$ is defined by \eqref{eq:linear_form_l_i}. Clearly, \eqref{eq:minimum2} shows that $m_g\leq -s/e$  if and only if there exists some $w\in\RR^{n+1}$ such that the inequalities
	\begin{equation} \label{eq:minimum3}
		t_i:=v_g(\tilde{a}_i)-s/e\geq l_i(w)
	\end{equation}
	hold. In other words, if the tuple $(t_i)$ is contained in the set $Q\subset\RR^{I_d}$ defined in \S \ref{subsec:inequalities}, see \eqref{eq:l_i_inequalities}. Now Lemma \ref{lem:l_i_inequalities} shows that $m_g\leq -s/e$ if and only
	\begin{equation} \label{eq:minimum4}
		\sum_{i\in I_d} e_{i,k} t_i \geq 0, \quad k=1,\ldots,N.
	\end{equation}
	By the definition of the functions $f_k$ (see \eqref{eq:f_k}) and the integer $e$ (see \eqref{eq:e_k}), we have
	\begin{equation} \label{eq:minimum5}
		v_g(f_k) = \sum_{i\in I_d} e_{i,k}v_g(\tilde{a}_i)
		= \sum_{i\in I_d} e_{i,k}t_i  + s.
	\end{equation}
	On the other hand, $\alpha(g)\geq s$ if and only if
	\begin{equation} \label{eq:minimum6}
		v_g(f_k) \geq s, \quad k=1,\ldots,N.
	\end{equation}
	The equivalence \eqref{eq:minimum1} follows by combining \eqref{eq:minimum4}, \eqref{eq:minimum5} and \eqref{eq:minimum6}.
\end{proof}

We claim that the function $\alpha:G^\rig\to\RR$ attains its maximum in some $g^{\min}\in G^\rig$. By Lemma \ref{lem:minimum} this means that the minimum
\[
m := \min_{g} m_g = m_{g^{\min}}
\]
is attained in $g^{\min}$. This will prove Theorem \ref{thm:global_minimum}, since the minimal value of $\phi_X$ on $\B_L$ is 
\[
m_{X,L} = \min\{m_g \mid g\in G(L)\}.
\]

To prove the claim, we would like to apply Lemma \ref{lem:maximum_principle}. But in order to do this, we first have to show that we may restrict $\alpha$  to a certain affinoid domain of $G^\rig$.

We set
\[
U := \{ x\in G^\rig \mid v_x(g_{\mu,\nu})\geq 0 \}.
\]
As the $g_{\mu,\nu}$ form a generating system of the $K$-algebra $A$, $U$ is an affinoid subdomain, of the form
\[
U = {\rm Sp}(\A), \quad \A := K\gen{g_{\mu,\nu} \mid 0\leq \mu,\nu\leq n}/(\det(g_{\mu,\nu})-1).
\]
See \cite[\S 5.4]{BoschRigid}.

Let $L/K$ be a fixed finite extension. Then by definition,
\[
U(L) = \SL_{n+1}(\OO_L) \subset G(L).
\]
This group is the stabilizer of the vertex $[v_0]\in\B_L^\circ$ corresponding to the homothety class of $v_0:=v_{\E_0,0}$ (or, equivalently, the homothety class of the lattice in $V_L^*$ generated by the basis $\E_0$), see \cite[Cor. 3.4]{parreau_immeubles}.

Let $g\in G(L)$ and consider the apartment $\A_g:=g(\A_0)\subset\B_L$. Let $[v]\in\A_g$ be a point in which the minimum $m_g$ of $\phi_X$ in $\A_g$ is attained. Let $\A'\subset \B_K$ be an apartment which contains both $[v_0]$ and $[v]$. We claim that there exists a $g'\in U(L)$ such that $\A'=g'(\A_0)$.

To prove the claim, we write $\A'=\A_{\E'}$, for some basis $\E'$ of $V_L^*$. Then $v_0=v_{\E',w'}$, with $w'\in\Gamma_L^{n+1}$ because $[v_0]$ is a vertex of $\B_L$. After multiplying the elements of $\E'$ with suitable constants, we may assume that $w'=0$. But now
\[
v_0 = v_{\E_0,0} = v_{\E',0},
\]
which means that $\E_0$ and $\E'$ generate the same $\OO_L$-lattice of $V_L^*$. This shows that $\E'=g'(\E_0)$ for some $g'\in U(L)$, proving the claim.

Since $[v]\in\A'=g'(\A_0)$, the choice of $[v]$ shows that
\[
m_{g'} \leq m_g
\]
and therefore
\[
\alpha(g') \geq \alpha(g),
\]
by Lemma \ref{lem:minimum}. This shows that if $\alpha$ attains its maximum on $G^\rig$, it attains it on $U$.

We can now apply Lemma \ref{lem:maximum_principle} to prove that $\alpha$ does indeed attain its global maximum (on the affinoid $U$). As explained before, this concludes the proof of Theorem \ref{thm:semistable_reduction}.

\section{Examples} \label{sec:examples}

We study three explicit examples of smooth plane curves over a complete and discretely valued non-archimedean field $K$. These examples are chosen to illustrate a few salient points of our approach, while being relatively simple. We also use this opportunity to demonstrate the implementation from \cite{KletusGitHub}. The common theme is that we try to understand the exact relationship between two related problems: on the one hand the computation of a semistable plane model of a plane curve (Problem \ref{prob:semistable_model} for $n=2$) and on the other hand the computation of a geometric semistable model (see e.g.\ \cite{oberwolfach} for a recent survey of the state of the art, and \cite{SSW} for a solution in case of plane quartics).

In all three examples we assume that the residue field $k$ of $K$ is algebraically closed. If $X$ is a smooth projective curve over $K$ of genus $g\geq 2$, then there exists a minimal finite extension $L/K$ over which $X$ has geometric semistable reduction. As we will see with our second example, the analogous statement for plane curves and GIT-semistable plane model does not hold.

\subsection{A cubic with potentially good reduction} \label{subsec:example1}

Let $K:=\QQ_2^\nr$ be the maximal unramified extension of the field of $2$-adic numbers, and
\begin{equation} \label{eq:exa1.1}
	F := x_0^2x_2-x_1^3+2x_2^3 \in K[x_0,x_1,x_2].
\end{equation}
Then $X:=V_+(F)\subset\PP^2_K$ is a smooth cubic. In fact, using the dehomogenization $x_2:=1$ and setting $x:=x_1$, $y:=x_0$, we obtain the familiar Weierstrass equation $y^2=x^3-2$ of an elliptic curve. Using standard methods one shows that $E$ has potentially good reduction over a unique quadratic extension $L/K$. In particular, the cubic $X$ has a unique stable model over $L$ but no semistable model over $K$. In the following we will prove this using the methods developed in this article. For this we follow the algorithm discussed in \S \ref{subsec:finding_minimum}. Where necessary, we pass to an unspecified extension $L/K$ with certain properties. In this simple example we manage to `guess' the right extension $L/K$, and to show that it is unique.

The naive $\OO_K$-model $\X_0$ of $X$, corresponding to the choice of the coordinate system $\E_0=(x_0,x_1,x_2)$, has special fiber $\X_{0,s}=V_+(\bar{F})\subset \PP^2_{\FF_2}$, with
\[
\bar{F} = x_0^2x_2+x_1^3.
\]
It is a singular cubic, with a cusp at the point $P:=[0:0:1]$. More precisely, $P$ is a point on $\X_{0,s}$ of multiplicity $2$, and its tangent cone is the double line given by $x_0^2=0$. One checks that $((x_0,x_1,x_2),(2,1,-3))$ is an instability of $\X_{0,s}$ (Case (c) of Proposition \ref{prop:plane_curves}). So the model $\X_0$ is \emph{not} semistable.

We note in passing that $\X_0$ is a {\em minimal model} of $X$. This can be checked with the algorithm from \cite[\S 7]{ES}, or the usual method for checking whether a Weierstrass equation is minimal. It does not immediately follow from the reasoning given below.

By Lemma \ref{lem:stability_function1} (i) and inspection of the Equation \eqref{eq:exa1.1}, the restriction $\phi_{F,\E_0}$ of the stability function to the apartment $\A_{\E_0}$ is
\begin{equation} \label{eq:exa1.2}
	\phi_{F,\E_0}([w]) = \max\big( -w_0+w_1, w_0-2w_1+w_2, w_0+w_1-2w_2-1 \big).
\end{equation}
In the following we use the identification $\RR^2\cong\EE_2$, $(w_0,w_1)\mapsto [(w_0,w_1,0)]$, which simplifies \eqref{eq:exa1.2} to
\begin{equation} \label{exa1.3}
	\phi_{F,\E_0}(w_0,w_1) = \max\big( -w_0+w_1, w_0-2w_1, w_0+w_1-1 \big).
\end{equation}
It is easy to see that the function $\phi_{F,\E_0}$ attains its minimal value $m_{\E_0}=-1/6$ in the point $b_1=(1/2,1/3)$, and nowhere else.
As the point $b_1$ lies inside the $2$-simplex with vertices $(0,1),(1,0),(1,1)$, $m_X=\phi_X(b_1)=-1/6$ is already the global minimum of the function $\phi_X$ on $\B_K$. It follows that no semistable model of $X$ over the field $K$ exists (Proposition \ref{prop:stability_function_intro} (iii)).

So in order to find a semistable model, we have to pass to a suitable finite extension $L/K$. In a first attempt, we assume that  $L$ contains an element $\alpha$ such that $v_L(\alpha)=1/6$ and that $\alpha^6/2$ reduces to $1$. Then the point $b_1$ becomes a vertex of the building $\B_L$ and corresponds to a hypersurface model $\X_1$ of $X_L$. We then have to check whether the model $\X_1$ is semistable. If it is, we are done, otherwise we have to find a new apartment containing $b_1$ on which $\phi_X$ has a strictly smaller minimum than $-1/6$.

To write down an equation for the model $\X_1$, we note that $b_1=[v_{\E_1,0}]$, where
\[
\E_1:=\lexp{{g_1}}{{\E_0}} = (\alpha^{-3}x_0,\alpha^{-2}x_1,x_2), \quad \text{with}\;\;
g_1:= \begin{pmatrix} \alpha^3 & & \\ & \alpha^2 & \\ & & 1 \end{pmatrix}.
\]
We use the convention that the elements of the new basis $\E_1$ are again called $x_0,x_1,x_2$. Then $\X_1$ is given by the equation
\[
F_1 := \frac{1}{2}\cdot\lexp{{g_1}}{F}= \frac{1}{2}F(\alpha^3x_0,\alpha^2x_1,x_2) = \frac{\alpha^6}{2}x_0^2x_2-\frac{\alpha^6}{2}x_1^3+x_2^3 = 0.
\]
The special fiber $\X_{1,s}$ is given by the equation
\[
x_0^2x_2+x_1^3+x_2^3 = (x_0+x_2)^2x_2 + x_1^3 = 0.
\]
It is again a singular cubic with a double point at $P=[1:0:1]$ and  with tangent cone the double line $(x_0+x_2)^2=0$.  In particular, the model $\X_1$ is not semistable, and hence $\phi_X(b_1)=-1/6$ is not the minimal value of $\phi_X$ on $\B_L$. It follows that there exists a basis
\[
\E_2:=\lexp{{g_2}}{{\E_1}}, \quad \text{with $g_2\in\SL_3(\OO_L)$,}
\]
such that the minimal value $m_{\E_2}$ of $\phi_X$ on the apartment $\A_{\E_2}$ is $<-1/6$. In fact, the position of the singular point of $\X_{1,s}$ and its tangent cone shows that we can take
\[
g_2 := \begin{pmatrix} 1 & 0 & 0 \\ 0 & 1 & 0 \\ \beta & 0 & 1 \end{pmatrix},
\]
where $\beta\in L$ is any element with $v_L(\beta)=0$, $\bar{\beta}=1$. Indeed, the equation for $\X_1$ in terms of the new basis $\E_2$ is
\begin{equation} \label{eq:exa1.5}
	\begin{split}
		F_2 &:=\frac{1}{2}\cdot\lexp{{g_2g_1}}{F} = \lexp{{g_2}}{{F_1}} = F_1(x_0+\beta x_2,x_1,x_2) \\
		& = \frac{\alpha^6}{2}(x_0+\beta x_2)^2x_2-\frac{\alpha^6}{2}x_1^3+x_2^3 \\
		& = \frac{\alpha^6}{2}x_0^2x_2 +\alpha^6\beta x_0x_2^2 - \frac{\alpha^6}{2}x_1^3 + (1+\frac{\alpha^6\beta^2}{2})x_2^3.
\end{split}\end{equation}
Note that the coefficients before $x_0x_2^2$ and  $x_2^3$ have positive valuation, and the other coefficients have valuation $0$ and reduce to $1$.  Therefore, the new equation for the special fiber $\X_1$ is
\[
x_0^2x_2+x_1^3=0.
\]
Now the singular point is $P=[0:0:1]$ and its tangent cone $x_0^2=0$, as required by Proposition \ref{prop:plane_curves}.

Let $t:=v_L(\alpha^6\beta^2/2+1)>0$. Then by Lemma \ref{lem:stability_function1} (i) and \eqref{eq:exa1.5} we have
\[\begin{split}
	\phi_{F,\E_2}(w_0,w_1) &=  \phi_{F_2,\E_2}(w_0,w_1) - 1  \\
	&= \max\big( -w_0+w_1, w_1-1, w_0-2w_1, w_0+w_1-t\big) -\frac{1}{6}.
\end{split}\]
If $t<2$ then this function has a unique minimum at $(t/2,t/3)$, with value $-(t+1)/6>-1/2$. If $t\geq 2$ then it has a unique minimum at $b_3=(1,2/3)$, with value $-1/2$. Since we want to `minimize the minimum', we assume that $t\geq 2$. Moreover, we choose as our new basis
\[
\E_3 := \lexp{{g_3}}{{\E_2}}, \quad \text{with}\;\;
\begin{pmatrix} 2 & & \\  & \alpha^4 & \\ & & 1 \end{pmatrix},
\]
so that $b_3:=[v_{\E_3,0}]\in\B_L^\circ$. The model $\X_3$ of $X_L$ corresponding to $b_3$ has equation
\[\begin{split}
	F_3 &:= \frac{1}{4}\cdot F_2(2x_0,\alpha^4 x_1, x_2) \\
	& = \frac{\alpha^6}{2}x_0^2x_2 +\frac{\alpha^6\beta}{2}x_0x_2^2 - \frac{\alpha^{18}}{8}x_1^3 + cx_2^3,
\end{split}\]
where
\[
c := \frac{2+\alpha^6\beta^2}{8}.
\]
Note that $v_L(c)\geq 0$ because $t\geq 2$.
So the special fiber of $\X_3$  has equation
\[
x_0^2x_2 + x_0x_2^2 + x_1^3 + \bar{c}x_2^3= 0,
\]
which is the equation of a smooth cubic over the residue field of $L$. So $\X_3$ is a smooth and in particular stable model of $X_L$.

All in all, the equation for $\X_3$ was obtained by the transformation
\[
F_3 = \frac{1}{8}\cdot\lexp{g}{F}, \quad \text{with}\;\;
g = g_3g_2g_1 = \begin{pmatrix} 2\pi & 0 & 0 \\
	0 & \pi^2 & 0 \\ \pi & 0 & 1 \end{pmatrix},
\]
where $\pi:=\alpha^3$. We may set $\beta:=1$ provided that $v_L(2+\pi^2)\geq 3$. In hindsight, we see that we only need a quadratic extension $L/K$. For instance, we can choose  $L:=K(\pi\mid \pi^2+2=0)$. On the other hand, our computation shows that the existence of a semistable model implies the existence of an element $\pi\in\OO_L$ with $v_L(2+\pi^2)\geq 3$. By Hensel's Lemma this condition implies the existence of an element $\pi$ with $\pi^2+2=0$. Therefore, $L:=K(\sqrt{-2})$ is the unique quadratic extension which works.

Here is how this example is handled by the first author's code (\cite{KletusGitHub}):
\vspace{2ex}
\begin{Verbatim}[breaklines=true, breakanywhere=true]
	sage: from semistable_model.curves import PlaneCurveOverValuedField
	sage: # Define the polynomial ring and valuation
	sage: R.<x,y,z> = QQ[]
	sage: v_2 = QQ.valuation(2)
	sage: # Define the defining polynomial
	sage: F = x^2*z - y^3 + 2*z^3
	sage: # Initialize the curve
	sage: Y = PlaneCurveOverValuedField(F, v_2)
	sage: Y
	Projective Plane Curve with defining polynomial -y^3 + x^2*z + 2*z^3 over Rational Field with 2-adic valuation
	sage: X = Y.git_semistable_model(); X
	Plane Model of Projective Plane Curve with defining polynomial -y^3 + x^2*z + 2*z^3 over Number Field in piK with defining polynomial x^2 + 2 with 2-adic valuation
	sage: Xs = X.special_fiber(); Xs
	Projective Plane Curve with defining polynomial y^3 + x^2*z + x*z^2 over Finite Field of size 2
	sage: Xs.is_smooth()
	True
	sage: X.base_change_matrix()
	[-2*piK      0      0]
	[     0     -2      0]
	[   piK      0      1]
\end{Verbatim}
\vspace{2ex}

\subsection{A Ciani quartic over $\QQ_2^\nr$} \label{subsec:example3}

We consider the plane quartic $X : F = 0$ over $K=\QQ_2^\nr$ given by
\[
F = x_{1}^{4} + 2 x_{0}^{3} x_{2} + x_{0} x_{1}^{2} x_{2} + 2 x_{0} x_{2}^{3}.
\]
It is smooth and has two commuting involutions $\sigma,\tau$ as automorphisms, given by
\[
\sigma^*: (x_0,x_1,x_2) \mapsto (x_2,x_1,x_0), \quad
\tau^*: (x_0,x_1,x_2) \mapsto (x_0,-x_1,x_2).
\]
Curves with this property are called {\em Ciani curves}. The reduction behavior of Ciani curves over a local field has recently been studied in \cite{Ciani1}, \cite{Ciani2}, under the assumption that the residue characteristic is odd.

We will show that  $X$ has a stable plane model over a ramified quadratic extension $L/K$. Unlike for the cubic from the previous section \S \ref{subsec:example1}, the extension $L/K$ is not unique, but it is also not arbitrary. This reflects the fact that the stable plane model of $X$ is not geometrically semistable (unlike the cubic from \S \ref{subsec:example1}).

\vspace{2ex}
We use a similar approach and notation as in \S \ref{subsec:example1}, but give less details. In particular, we will not write down the formulae for the stability function.

One first checks that the minimum of the stability function on the apartment $\A_{\E_0}$ corresponding to the standard basis $\E_0=(x_0,x_1,x_2)$ is equal to $m_{\E_0}=-\frac{1}{3}$ and is achieved precisely at the point
\[
u_1 = \big( 0, \frac{1}{2}, 0 \big) \in\A_{\E_0}\subset \B_K.
\]
One can show that $m_{\E_0}$ is already the global minimum of the stability function on $\B_K$. As $u_1$ is not a vertex, it follows that $X$ does not have a plane semistable model over $K$. We will show this claim via a direct argument below.

The point $u_1$ becomes a vertex over any extension $L/K$ which contains an element $\alpha\in L$ with $v_L(\alpha)=1/2$. Over such an extension, the vertex $u_1$ corresponds to the $\OO_L$-model $\X_1$ of $X$ given by the equation
\[
F_1 := \frac{1}{2} F(x_0,\alpha x_1, x_2) = \frac{\alpha^4}{2} x_1^4 + x_0^3 x_2 + \frac{\alpha^2}{2} x_0 x_1^2 x_2 + x_0 x_2^3 = 0.
\]
For simplicity, we make the harmless assumption that $t:=v_L(\alpha^2+2)>1$ (this is not yet a restriction on the extension $L/K$). Then the special fiber $\X_{1,s}$ is given by the equation
\[
x_0^3x_2 + x_0x_1^2x_2 + x_0x_2^3 = x_0x_2(x_0+x_1+x_2)^2 = 0.
\]
It follows from Proposition \ref{prop:plane_curves} that $\X_{1,s}$ is unstable. In fact, Case (a) holds for the line $L:\;x_0+x_1+x_2=0$, and Case (c) and (d) both hold for the pair $(L,P)$, where $L$ is as before, and $P$ is either one of the two points $[0:1:1]$ and $[1:1:0]$.

The instability of $\X_{1,s}$ corresponding to the line $L$ suggest the coordinate transformation
\[
F_2 := F_1(x_0,x_0+x_1+x_2,x_2).
\]
This is the equation for the model $\X_1$ with respect to the basis
\[
\E_2 := \lexp{g}{{\E_0}}, \quad g := g_2g_1 =
\begin{pmatrix}
	1 & \alpha & 0 \\ 0 & \alpha & 0 \\ 0 & \alpha & 1
\end{pmatrix}.
\]
It is chosen so that the line $L$ responsible for the instability of $\X_{1,s}$ is given by the equation $x_1=0$.

The point $u_1$ lies at the boundary of the intersection $\A_{\E_0}\cap\A_{\E_2}$. By the choice of $\E_2$, $u_1$ is not the minimum of the stability function on the apartment $\A_{\E_2}$, i.e. we have $m_{\E_2}<m_{\E_1}=m_{\E_0}$. 

Let us now assume that $t=v_L(\alpha^2+2)\geq 2$; e.g.\ we could choose $\alpha:=\sqrt{-2}$. Then one computes that the new minimum is  $m_{\E_2}=-2/3$, which is achieved precisely in the point $u_3=(0,1/2,0)\in\A_{\E_2}$. The model $\X_3$ corresponding to $u_3$ has equation
\[
F_3 := \frac{1}{2}F_2(x_0,\alpha x_1,x_2).
\]
This works because the coefficients of $x_0^3x_2$ and $x_0x_2^3$ in $F_3$ are equal to
\[
\alpha^4+\frac{1}{4}\alpha^2+\frac{1}{2} = \alpha^4 +\frac{1}{4}(\alpha^2+2), 
\]
a term which has valuation $\geq \min(2,t-2) \geq 0$. If we assume, moreover, that $t>2$ then these two coefficients vanish after reduction and the special fiber $\X_{3,s}$ of the model $\X_3$ has equation
\[
\bar{F}_3 = x_0^4 +  x_0x_1^2x_2 + x_0^2x_2^2 + x_2^4 = 0.
\]
By an easy computation one shows that $\X_{3,s}$ is reduced and irreducible and has exactly three singular points, each of multiplicity $2$:
\[
P_1=[0:1:0], \quad P_2=[\bar{\zeta}:0: 1], \quad P_3=[\bar{\zeta}+1:0:1].
\]
Here $\bar{\zeta}\in k$ is a generator of the subextension $\FF_4/\FF_2$.

The point $P_1$ is an ordinary double point (its tangent cone is given by $x_0x_1=0$), but the points $P_2,P_3$ are cusps, with tangent cone $(x_0+\bar{\zeta}x_1)^2=0$ resp.\ $(x_0+(\bar{\zeta}+1)x_1)^2=0$. It follows from Proposition \ref{prop:stability_function_intro} that $\X_{3,s}$ is semistable. In fact, a closer look at the proof of Proposition\ref{prop:plane_curves}, or at \cite[\S 4.2]{MumfordGIT} shows that $\X_{3,s}$ is stable (this is also a special case of the main result of \cite{Mordant2023}). It follows that $\X_3$ is the unique stable plane model of $X$ over the extension $L/K$. In particular, the point $u_3=(0,1/2,0)\in\A_{\E_2}\subset\B_L$ is the unique point where the stability function achieves its global minimum $-3/2$.

The only property of $L$ that we have used for the construction of the model $\X_3$ is that it contains an element $\alpha$ with $v_L(\alpha^2+2)\geq 2$. Among the three ramified quadratic extensions of $K=\QQ_2^\nr$, two have this property (namely $K_1:=K(\sqrt{2})$ and $K_2:=K(\sqrt{-2})$), but the third, $K_3:=K(\sqrt{-1})$, has not. One computes that the global minimum of the stability function on $\B_{K_3}$ is equal to $-1/2$, and is not achieved in a vertex. We conclude that the curve $X$ has no plane semistable model over the extension $K_3/K$.


Note that the model $\X_3$ is not {\em geometrically} semistable because the special fiber $\X_{3,s}$ has two cusps as singularities. Assume that the extension $L/K$ has been chosen sufficiently large so that $X$ has geometric semistable reduction over $L$. Let $\X$ be the stable model (in the sense of Deligne and Mumford) of $X$ over $\OO_L$. Then the main result of \cite{MaxMaster} predicts the following:
\begin{enumerate}[(i)]
	\item
	The geometric stable model $\X$ dominates the plane stable model $\X_3$, i.e.\ there exists a (unique) morphism $\X\to\X_3$ of $\OO_L$-schemes extending the identity on the generic fiber $X$.
	\item
	The special fiber $\X_s$ has three irreducible components $Z_1,Z_2,Z_3$. The curve $Z_1$ is semistable of geometric genus zero and has a single ordinary double point. The curves $Z_2,Z_3$ are semistable curves of genus one, and each intersects $Z_1$ in a unique point (i.e.\ $Z_2,Z_3$ are so-called \emph{$1$-tails}).
	\item
	The induced map $\X_s\to\X_{3,s}$ restricts to a homeomorphism $Z_1\iso \X_{3,s}$, and contracts the components $Z_2$, $Z_3$ to the cusps $P_2,P_3\in\X_{3,s}$.
\end{enumerate}

In our follow-up article \cite[\S 5.1]{SSW} we show how to compute the extension $L/K$ and the modification $\X\to\X_3$ giving rise to the stable model $\X$ explicitly. The outcome in this particular example is the stable model $\X$ can be defined over a Galois extension $L/K$ of degree $24$. The two $1$-tails $Z_2,Z_3$ are both isomorphic to the smooth cubic in Weierstrass normal form 
\[
y^2 + y = x^3.
\]  
The Galois group of $L/K$ acts naturally and faithfully on $\X_s$. This action is trivial on the strict transform of $\X_{3,s}$,  fixes the two tails $Z_2,Z_3$ and acts on each of them  as the full automorphism group (which is isomorphic to $\SL_2(3)$).

A slightly inconvenient observation here is that the extension $L/K$ described above does not contain any of the two quadratic extensions $K_1=\QQ_2(\sqrt{2})$ or $K_2=\QQ_2(\sqrt{-2})$ of $K$. So if we first compute the GIT-stable model $\X_3$ over one of these quadratic extensions, say $K_1$, then we need a further extension $L_1/K_1$ of degree $24$ to obtain geometric semistable reduction. The extension $L_1/K$ then has degree $48$, which is not minimal. This shows that our two-step strategy (first a GIT-semistable plane model, then the
geometric stable model) need not produce a minimal extension.

\subsection{A quartic with infinitely many semistable models} \label{subsec:example2}

Let $K$ be a field which is complete with respect to a discrete valuation $v_K$, and assume that the residue field $k$ of $v_K$ is infinite. We will give an example of a smooth plane quartic over $K$ which has infinitely many nonisomorphic semistable models. This realizes concretely the phenomenon mentioned at the end of \S \ref{subsec:stability_function_intro}.

Let $\pi$ denote a prime element of $K$, and let $G\in \OO_K[x_0,x_1,x_2]_4$ be a form of degree $4$ with integral coefficients. We set
\[
X:=V_+(F), \quad F:=(x_0^2+x_1x_2)^2 + \pi^5G.
\]
One can show that we can choose $G$ such that $X$ is smooth. For instance, if ${\rm char}(k)\neq 2$, then it suffices that the conic $Q$ meets the quartic $G=0$ in $8$ distinct geometric points (see e.g.\ \cite[Proposition 1.2]{LLLR}). The model $\X_0$ of $X$ corresponding to the standard coordinate system $\E_0=(x_0,x_1,x_2)$ has special fiber $\X_{0,s}=V_+(\overline{F})$, with
\[
\overline{F}=(x_0^2+x_1x_2)^2 = x_0^4 + 2x_0^2x_1x_2 + x_1^2x_2^2,
\]
see Example \ref{exa:semi-instabilities}. This is a semistable, but not properly stable quartic over $k$. In particular, $\X_0$ is a semistable model of $X$.

Let $t\in \OO_K$ be an integral element of $K$, and set
\[
g_t := \begin{pmatrix} 1 & & \\ & \pi^{-1} & \\ & & \pi \end{pmatrix} \cdot \begin{pmatrix} 1 & 0 & -2t \\ t & 1 & -t^2\\ 0 & 0 & 1 \end{pmatrix} =
\begin{pmatrix}
	1 & 0 & -2t \\ t\pi^{-1} & \pi & -t^2\pi^{-1} \\ 0 & 0 & \pi
\end{pmatrix}.
\]
An easy calculation shows that
\[
\lexp{{g_t}}{F} = (x_0^2+x_1x_2)^2 + \pi\tilde{G},
\]
where $\tilde{G}:=\pi^4\cdot\lexp{{g_t}}{G}$ is a quartic form with integral coefficients. The choice of $g_t$ is informed by the reasoning in Example \ref{exa:semi-instabilities}: we  transform the standard coordinate system $(x_0,x_1,x_2)$ to a new coordinate system $(y_0,\pi^{-1} y_1,\pi y_2)$, where the line $y_0=0$ passes through the two points
\[
P_1=[0:0:1], \quad P_2=[t:1:-t^2]
\]
on the conic $C:x_0^2-x_1x_2$, and $y_1=0$ and $y_2=0$ are the tangent lines to $C$ through $P_1$ and $P_2$, respectively. This coordinate change preserves the equation $x_0^2-x_1x_2$ of $C$.

In any case, we see that the model $\X_t:=\lexp{{g_t}}{{\X_0}}$ is also semistable. We claim that two such models $\X_t$ and $\X_s$, with $s,t\in\OO_K$, are non-isomorphic if the images of $t$ and $s$ in the residue field $k$ are distinct. To see this, it suffices to check that the matrix
\[
g_tg_s^{-1} = \begin{pmatrix}
	1 & 0 & 2(s-t)\pi^{-1} \\ (t-s)\pi^{-1} & 1 & -(t-s)^2\pi^{-2} \\
	0 & 0 & 1 \end{pmatrix}
\]
does not lie in $\GL_3(\OO_K)$. This is clearly true if $s\not\equiv t\pmod{\pi}$. We conclude that there are infinitely many non-isomorphic semistable models of $X$ if the residue field $k$ is infinite.

\vspace{2ex}
It is interesting to consider the above example in the light of the articles \cite{LLLR}, \cite{MaxMaster} and \cite{SSW}. The fact that the GIT-semistable models $\X_t$ constructed above are not properly stable implies, by the main result of \cite{MaxMaster}, that the curve $X$ has {\em hyperelliptic reduction}. This means that the special fiber $\X_s$ of the geometric stable model $\X$ of $X$ lies in the closure of the hyperelliptic locus of $\overline{\M}_3$. And in general, the stable model may not dominate any of the GIT-semistable plane models $\X_t$ constructed above. 

However, if we choose the form $G$ generically, then \cite[Theorem 1.4]{LLLR} shows that the special fiber $\X_s$ is a smooth hyperelliptic curve. Moreover, $\X$ dominates a unique semistable plane model $\X_0$, whose special fiber is a double conic. So the geometric stable model of $X$ `picks out` one of the infinitely many plane semistable models. For our motivating question, this is bad news. It shows that in the case of hyperelliptic reduction, our strategy of `compute a GIT-semistable model first and then the geometric semistable model`, worked out in \cite{SSW}, doesn't work as well as in the case of non-hyperelliptic reduction. We hope to come back to this problem  in a future article.

\vspace{2ex}\noindent
{\bf Data availability statement:} No new data were generated or analyzed in support of this research. Therefore, data sharing is not applicable.

\vspace{2ex}\noindent
{\bf Conflict of interest statement:}
The authors declare that they have no conflict of interest.


\end{document}